\newcommand{\Aut}{\mathop{\rm Aut}\nolimits}
\newcommand{\IMG}{\mathop{\rm IMG}\nolimits}
\newcommand{\Isom}{\mathop{\rm Isom}\nolimits}
\newcommand{\St}{\mathop{\rm St}\nolimits}
\newcommand{\Stg}{\mathop{\rm St}\nolimits_G}
\newcommand{\G}{\mathcal G}
\newcommand{\GE}{\mathcal G_{(01)^\infty}}
\newcommand{\fpG}{\widetilde{\mathcal G}}
\newcommand{\fpB}{\widetilde{\mathcal B}}
\newcommand{\A}{\mathcal A}
\newcommand{\B}{\mathcal B}
\newcommand{\D}{\mathcal D}
\newcommand{\Z}{\mathbb Z}
\newcommand{\Q}{\mathbb Q}
\newcommand{\R}{\mathbb R}
\newcommand{\Sym}{\mathop{\rm Sym}\nolimits}
\newcommand{\Zp}{\Z/(p^n\Z)}
\newcommand{\be}{\mathrm e}
\newcommand{\rootv}{\Lambda}
\newtheorem{theorem}{Theorem}[section]
\newtheorem{corollary}[theorem]{Corollary}
\newtheorem{proposition}[theorem]{Proposition}
\newtheorem{lemma}[theorem]{Lemma}
\newtheorem{question}[theorem]{Question}
\newtheorem{convention}[theorem]{Convention}
\newtheorem{theoremx}{Theorem}
\newtheorem*{theoremhnn}{Theorem~\ref{thmx:hnn}}
\newtheorem*{theoremisoms}{Theorem~\ref{thmx:isoms}}
\theoremstyle{definition}
\newtheorem{definition}[theorem]{Definition}
\newtheorem{example}[theorem]{Example}
\newtheorem{remark}[theorem]{Remark}
\numberwithin{equation}{section}
\title{Liftable self-similar groups and scale groups}
\author[1]{Rostislav Grigorchuk}
\affil[1]{Department of Mathematics\\
        Texas A\&M University\\
        College Station, TX 77843-3368\\
        \href{mailto:grigorch@math.tamu.edu}{grigorch@math.tamu.edu}}
\author[2]{Dmytro Savchuk}
\affil[2]{Department of Mathematics and Statistics\\
        University of South Florida\\
        4202 E Fowler Ave\\
        Tampa, FL 33620-5700\\
        \href{mailto:savchuk@usf.edu}{savchuk@usf.edu}}
\begin{document}

\maketitle

\begin{abstract}
We canonically identify the groups of isometries and dilations of local fields and their rings of integers with subgroups of the automorphism group of the $(d+1)$-regular tree $\widetilde T_{d+1}$, where $d$ is the residual degree. Then we introduce the class of liftable self-similar groups acting on a $d$-regular rooted tree whose ascending HNN extensions act faithfully and vertex transitively on $\widetilde T_{d+1}$ fixing one of the ends. The closures of these extensions in $\Aut(\widetilde T_{d+1})$ are totally disconnected locally compact group that belong to the class of scale groups as defined in~\cite{willis:scale_groups}. We give numerous examples of liftable groups coming from self-similar groups acting essentially freely or groups admitting finite $L$-presentations. In particular, we show that the finitely presented group constructed by the first author in~\cite{grigorch:example} and the finitely presented HNN extension of the Basilica group constructed in~\cite{grigorch_z:basilica_sp} embed into the group $\mathcal D(\Q_2)$ of dilations of the field $\Q_2$ of $2$-adic numbers. These actions, translated to $\widetilde T_3$, are 2-transitive on the punctured boundary of $\widetilde T_3$. Also we explore scale-invariant groups studied in~\cite{nekrashevych_p:scale_invariant} with the purpose of getting new examples of scale groups.
\end{abstract}

\tableofcontents

\section{Introduction}
The main goal of this article is to shed more light on the question about what kind of groups can act by automorphisms and cocompactly on a locally finite infinite tree. As a byproduct of our considerations we gain some information about subgroups of groups of isometries (and more generally of dilations) of local fields, and about the class of scale groups, an important subclass of totally disconnected locally compact (TDLC) groups. Our focus is on subgroups of automorphisms of a $(d+1)$-regular tree ($d\geq 2$) which we will denote by $\widetilde{T}_{d+1}$ (keeping the notation $T_d$ for a $d$-regular rooted tree that also will play an important role in the paper). Such trees are the most symmetric objects among all trees and deserve special attention. The group $\Aut(\widetilde T_{d+1})_{\omega}$ of automorphisms of $\widetilde T_{d+1}$ preserving an end $\omega$, is known for a while, in particular in relation with amenability (of topological groups), random walks, and harmonic analysis~\cite{cartwright_kw:random_walks_on_the_affine_group94,bartholdi_nw:horocyclic_product_of_trees08,woess:rw}.

The study of groups acting on trees (and more generally metric trees, including $\R$-trees) was initiated by J.-P.~Serre and J.~Tits in 1960's and the basics of the theory are known as the \emph{Bass-Serre theory}~\cite{serre:arbres77,bass:covering_theory93,dicks_d:groups_acting_on_graphs89}. One of the key questions is which (discrete or more generally topoplogical) groups act on a tree faithfully and vertex transitively. There are many articles on this topic (see, for example,~\cite{burger_m:groups_acting_on_trees00,bass_l:rigidity94,button:groups_acting_faithfully_on_trees}) but the question still is far from getting the satisfactory answer. A special case considered by Culler and Morgan in~\cite[Theorem 2.7]{culler_m:group_actions_on_R_trees87} (see also a more detailed exposition of this result in~\cite{pays_v:sous-group_libres91}, similar result in connection to closed amenable groups acting on trees by Nebbia~\cite{nebbia:amenability_and_kunze-stein_property88}, and a short exposition with a simple proof in an unpublished note by Wilton~\cite{wilton:group_actions_on_trees}) is when the group $G$ does not contain a free group $F_2$ on two generators as a subgroup. In this case any action of $G$ on a tree must preserve a vertex, an edge, an end or a pair of ends (as a set). Discrete amenable groups do not contain $F_2$ as a subgroup, hence when acting on a tree, they must fix something. The case when there is a global fixed vertex is orthogonal to co-compactness of the action and is a matter of study of groups acting on rooted trees~\cite{gns00:automata,nekrash:self-similar}. This case is quite understood and the groups in this class are residually finite~\cite{gns00:automata,button:groups_acting_faithfully_on_trees}. The case when the edge is fixed is not far from the previous one, and the case when a group fixes a set of two ends is also well understood. For instance, when it fixes both ends then the involved group is a semidirect product of $\Z$ and a group acting level transitively on a rooted tree.

Thus we are left with the case when $G$ fixes an end of the tree (i.e., a point of the boundary of the tree) and we focus our attention to this case.
%But what about groups of automorphisms of the fields?
%The infinite Galois theory gives us a lot of information on this topic and recent studies \cite show that the so called \emph{self--similar} and \emph{branch} groups play a role in modern studies \cite .

If a field is supplied with a natural metric then one could be interested in the group of its isometries (or more generally dilations). A natural class of fields to be considered is the class of non-Archimedian local fields which, by the famous characterization (see, for example, Theorem~22 in~\cite{pontryagin:topological_groups_3rd_edition_translated86} or Remark~7.49 in~\cite{milne:ANT}, or an earlier work~\cite{otobe:locally_compact_fields45}), are either finite extensions of the field $\Q_p$ of $p$-adic numbers or are isomorphic to the field $\mathbb F_q((x))$ of Laurent powers series over a finite field $\mathbb F_q$.

Our result (in fact observation) given by Theorem~\ref{thmx:isoms} and numerous examples coming from theory of groups acting on rooted trees show that the groups of isometries and dilations of local fields contain a plethora of exotic subgroups satisfying various conditions of transitivity, including those that do not satisfy the famous Tits alternative that states that a finitely generated linear group (i.e., a group generated by matrices over a field) either contains a free subgroup $F_2$ or is virtually solvable, and hence amenable. This includes groups of intermediate growth, finitely presented amenable, but not elementary and not subexponentially amenable groups, non-elementary just-infinite groups, groups of branch type, and finitely constrained groups.
Thus, the situation here is very different from the situations with subgroups of linear groups.

While the theory of connected locally compact topological groups is well developed, totally disconnected locally compact groups are very far from being well understood. A mysterious connection of them to self-similar groups was discovered recently in~\cite{caprace_w:tdlc_invitation,willis:scale_groups} via the notion of \emph{scale} group, that is a closed vertex transitive subgroup of $\Aut(\widetilde{T}_{d+1})$ fixing an end. By the result of Nebbia~\cite[Theorem~2]{nebbia:amenability_and_kunze-stein_property88} this is exactly the class of closed amenable (as topological groups) vertex transitive subgroups of $\Aut(\widetilde{T}_{d+1})$.

Our main result, Theorem~\ref{thmx:hnn}, gives a way to construct new examples of scale groups realizing them as closures of the ascending HNN extensions of the so-called liftable groups introduced in the article (see Definition~\ref{def:liftable}).

The alternative approach of getting scale groups is via not finitely co-hopfian groups, i.e., groups containing proper subgroups of finite index isomorphic to them. Getting interesting examples of finitely generated groups of this sort is a challenging problem which is closely related to the problem of constructing \emph{scale-invariant} groups (see Definition~\ref{def:scale_invariant}) initiated in~\cite{nekrashevych_p:scale_invariant}. The level of transitivity of the scale group is a matter of a special interest and is discussed for instance in~\cite{willis:scale_groups}. We give some insight on it as well in Theorem~\ref{thmx:grig}.

Scale groups are amenable groups in the category of topological groups and in most of our examples they are closures of (non-elementary) amenable abstract groups. Moreover, in some cases these amenable groups are finitely presented and non-elementary amenable. % but not residually finite. We raise here the question about the existence of a finitely presented residually finite amenable but not elementary amenable group.

We give also examples of scale groups that are closures of non-amenable groups (even not satisfying the Haagerup property). Observe that closed subgroups of $\Aut(\widetilde{T}_{d+1})$ that act transitively on $\partial\widetilde{T}_{d+1}$ and hence are not scale groups belong to the class of Kunze-Stein groups~\cite{nebbia:amenability_and_kunze-stein_property88}.

Now let us pass more closely to the context of the article. We start from establishing a connection between groups of isometries and dilations of local fields and groups of automorphisms of regular trees. The following theorem could be viewed as a folklore type statement, but it seems that it was never stated in this form before. By dilation of a metric space we mean a homeomorphism that uniformly expands distances by some constant $\lambda>0$ (see Definition~\ref{def:dilation}).

%Let $\Z_p$ and $\Q_p$ denote the ring and the field of $p$-adic numbers for a prime $p$. The article has several goals. First, to describe the groups of isometries of local fields, viewed as metric spaces with respect to the metric induced by valuation, and their rings of integers (e.g., $\Isom(\Q_p)$ and $\Isom(\Z_p)$, or $\Isom\bigl(\mathbb F_q((x))\bigr)$ and $\Isom\bigl(\mathbb F_q[[x]]\bigr)$), as well as the groups of dilations of local fields (e.g., $\D(\Q_p)$ or $\D\bigl(\mathbb F_q((x))\bigr)$) as the groups acting by automorphisms on a $p$-regular rooted tree $T_p$ and $(p+1)$-regular unrooted tree $\widetilde T_{p+1}$ fixing one of its ends $\omega$. This will show that these groups contain abstract subgroups with unusual and very different from linear groups properties. The following theorem could be viewed as a folklore type statement, but it seems that it was never stated in this form before.

\begin{theoremx}
\label{thmx:isoms} Let $F$ be a (non-Archimedean) local field with the ring of integers $\mathcal O$ and the residue field of size $q$, endowed with the metric induced by its valuation. Then\\[-5mm]
\begin{itemize}
\item[(i)] the group $\Isom(\mathcal O)$ of isometries of $\mathcal O$ is isomorphic to the group $\Aut(T_q)$ of automorphisms of $T_q$.
\item[(ii)] the group $\Isom(F)$ of isometries of $F$ is isomorphic to the group $\Aut_0(\widetilde T_{q+1})_{\omega}$ consisting of automorphisms of $\widetilde T_{q+1}$ that fix a distinguished end $\omega$ pointwise (eventually fixing the vertices in the ray corresponding to $\omega$).
\item[(iii)] the group $\D(F)$ of dilations of $F$ is isomorphic to the group $\Aut(\widetilde T_{q+1})_{\omega}$ consisting of automorphisms of $\widetilde T_{q+1}$ that fix an end $\omega$ as a point of the boundary of $\widetilde T_{q+1}$ (but possibly moving the vertices in rays corresponding to $\omega$ along those rays while preserving $\omega$ as a point of the boundary).
\end{itemize}
\end{theoremx}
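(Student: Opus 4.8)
The plan is to realize both $\mathcal O$ and $F$ as the space of ends of a tree assembled from their balls, and then to read off all three statements from a single dictionary. First I would build the \emph{ball tree}. Normalize the absolute value so that the value set of $F$ is $q^{\Z}\cup\{0\}$; then the balls of $\mathcal O$ are the cosets $x+\mathfrak m^{k}$ with $k\ge 0$ ($\mathfrak m$ the maximal ideal), and the balls of $F$ are the cosets $x+\mathfrak m^{k}$ with $k\in\Z$. Order balls by inclusion: each ball $B=x+\mathfrak m^{k}$ lies in a unique ball $x+\mathfrak m^{k-1}$ of $q$ times the radius (its parent) and contains exactly $q$ balls of $q^{-1}$ times the radius (its children), so the Hasse diagram of the balls of $F$ is a $(q+1)$-regular tree; this is our $\widetilde T_{q+1}$, and the balls of $\mathcal O$ form the rooted subtree $T_{q}$ of descendants of the vertex $\mathcal O$. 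Set the height $h(x+\mathfrak m^{k})=k$, so the parent map lowers $h$ by $1$. The ends of $\widetilde T_{q+1}$ come in two flavours: the unique ascending end $\omega$ represented by $\mathcal O\subset\mathfrak m^{-1}\subset\mathfrak m^{-2}\subset\cdots$ (every ascending ray merges with it, since $\bigcup_{k}\mathfrak m^{-k}=F$), and for each $x\in F$ the descending end $\xi_{x}$ represented by $\cdots\supset x+\mathfrak m^{k}\supset x+\mathfrak m^{k+1}\supset\cdots$; the assignment $x\mapsto\xi_{x}$ is a homeomorphism $F\xrightarrow{\ \sim\ }\partial\widetilde T_{q+1}\setminus\{\omega\}$ restricting to $\mathcal O\xrightarrow{\ \sim\ }\partial T_{q}$. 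The one fact I would isolate is the metric identity: for $x\ne y$ the smallest ball containing both is $x+\mathfrak m^{v(x-y)}$, i.e.\ the confluence vertex $c$ of $\xi_{x}$ and $\xi_{y}$, and $|x-y|=q^{-h(c)}$, so the distance is a fixed strictly decreasing function of the height of the confluence.

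\emph{For (i).} An isometry $\phi$ of $\mathcal O$ sends each ball to a ball of the same radius and, since among ultrametric balls $B\subseteq B'$ iff $B\cap B'\ne\varnothing$ and $\mathrm{rad}(B)\le\mathrm{rad}(B')$, it preserves inclusions; it therefore permutes the vertices of $T_{q}$, preserves adjacency, and fixes the root $\mathcal O$, hence induces an automorphism of $T_{q}$. Conversely an automorphism of $T_{q}$ permutes balls preserving adjacency and root, hence acts on $\partial T_{q}=\mathcal O$ preserving heights of confluences, hence — by the metric identity — by an isometry. These two assignments are mutually inverse group homomorphisms, and $\Aut(T_{q})\to\Isom(\mathcal O)$ is injective because a vertex is recovered as the confluence of two ends through it (using $q\ge 2$). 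This gives (i).

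\emph{For (ii) and (iii) together.} Let $\psi\in\D(F)$ dilate by $\lambda$. Since $|\psi(x)-\psi(y)|$ lies in $q^{\Z}\cup\{0\}$, necessarily $\lambda=q^{n}$ for some $n\in\Z$. Then $\psi$ sends a ball of radius $q^{-k}$ to one of radius $q^{n-k}$ and preserves the parent/child relation (inclusions of balls are preserved), so it induces an automorphism $g_{\psi}$ of $\widetilde T_{q+1}$ shifting $h$ by $-n$; $g_{\psi}$ carries ascending rays to ascending rays, hence $g_{\psi}\in\Aut(\widetilde T_{q+1})_{\omega}$, and its action on $\partial\widetilde T_{q+1}\setminus\{\omega\}=F$ is again $\psi$. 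Conversely, any $g\in\Aut(\widetilde T_{q+1})_{\omega}$ has a translation length $n\in\Z$ along the geodesic ray $(c_{j})_{j\ge0}$ pointing to $\omega$ (so $g c_{j}=c_{j+n}$ for $j\gg0$), hence shifts $h$ by a global constant, preserves confluences and changes their heights by $-n$, and therefore acts on $F$ by a homeomorphism scaling distances by $q^{n}$ — an element of $\D(F)$. These are mutually inverse group homomorphisms; injectivity (a tree automorphism fixing $\omega$ and acting trivially on $F$ fixes all of $\partial\widetilde T_{q+1}$, hence is trivial) and surjectivity (every translation length is realized, e.g.\ by $x\mapsto\pi^{-n}x$ for a uniformizer $\pi$, and translation length $0$ is handled by (i)) yield $\D(F)\cong\Aut(\widetilde T_{q+1})_{\omega}$, which is (iii). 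Finally, $\psi$ is an isometry iff $n=0$ iff $g_{\psi}$ has translation length $0$ iff $g_{\psi}$ preserves every horosphere $h^{-1}(k)$ iff $g_{\psi}$ eventually fixes the ray to $\omega$ pointwise — the last equivalence because $g_{\psi}$ already fixes $\omega$, so it carries $(c_{j})$ to an eventually coinciding ray; hence $\Isom(F)\cong\Aut_{0}(\widetilde T_{q+1})_{\omega}$, proving (ii).

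I do not expect a genuinely hard step: essentially all the content sits in the first paragraph — correctly assembling the ball tree and verifying the metric identity — after which (i)–(iii) are bookkeeping. The two points I would be careful about are the compact-versus-locally-compact distinction ($T_{q}$ is rooted and embeds in $\widetilde T_{q+1}$ as the descendants of the vertex $\mathcal O$, whereas $\widetilde T_{q+1}$ has no root, and $F$ is $\partial\widetilde T_{q+1}$ with the point $\omega$ deleted) and the difference between fixing $\omega$ pointwise and fixing it only as a boundary point, which is precisely the isometry/dilation dichotomy, i.e.\ translation length $0$ versus arbitrary. It is also worth recording that the homomorphism $\D(F)\to\Z$, $\psi\mapsto\log_{q}\lambda$, with kernel $\Isom(F)$, corresponds under these isomorphisms to the translation-length homomorphism $\Aut(\widetilde T_{q+1})_{\omega}\to\Z$ with kernel $\Aut_{0}(\widetilde T_{q+1})_{\omega}$.
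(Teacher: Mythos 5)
Your proposal is correct and follows essentially the same route as the paper: identify $\mathcal O$ and $F$ metrically with the boundary of $T_q$ and the punctured boundary of $\widetilde T_{q+1}$, observe that isometries and dilations permute balls (equivalently, vertices) compatibly with the tree structure and that any dilation factor must be a power of $q$, and conversely recover the boundary action of an end-fixing automorphism from heights of confluence vertices (a level shift by the translation length). The differences are presentational: you construct $\widetilde T_{q+1}$ intrinsically as the tree of cosets $x+\mathfrak m^{k}$ and prove part (i) directly, whereas the paper uses the Laurent-series labelling of vertices $[\xi]_n$ and cites Proposition~3.7 of~\cite{gns00:automata} for the rooted case, leaving the converse direction of (iii) as ``analogous'' where you spell out the height-shift argument.
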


%\begin{theoremx}
%\label{thmx:isoms}
%~\\[-5mm]
%\begin{itemize}
%\item[(i)] The group $\Isom(\Z_p)$ of isometries of $\Z_p$ is isomorphic to the group $\Aut(T_p)$ of automorphisms of $T_p$.
%\item[(ii)] The group $\Isom(\Q_p)$ of isometries of $\Q_p$ is isomorphic to the group $\Aut_0(\widetilde T_{p+1})_{\omega}$ consisting of automorphisms of $\widetilde T_{p+1}$ that fix an end $\omega$ poitwise (fixing the vertices in $\omega$).
%\item[(iii)] The group $\D(\Q_p)$ of dilations of $\Q_p$ is isomorphic to the group $\Aut(\widetilde T_{p+1})_{\omega}$ consisting of automorphisms of $\widetilde T_{p+1}$ that fix an end $\omega$ as a point of the boundary of $\widetilde T_{p+1}$ (but possibly moving the vertices in $\omega$ along $\omega$.
%\end{itemize}
%\end{theoremx}

The automorphism groups involved in the theorem were discussed before (see, for example,~\cite{cartwright_kw:random_walks_on_the_affine_group94,bartholdi_nw:horocyclic_product_of_trees08}) and we present some background about them in Subsection~\ref{ssec:history_trees}.

As was already mentioned one of the goal of the paper is to construct examples of amenable totally disconnected locally compact (TDLC) groups coming from the actions of self-similar groups on $d$-regular rooted tree $T_d$, $d\geq 2$. Such groups are built as the closures of embeddings of the HNN extensions of \emph{liftable self-similar groups} defined in the article (see Definition~\ref{def:liftable}) and scale-invariant groups (see Definition~\ref{def:scale_invariant}), in the group of automorphisms of $\widetilde T_{d+1}$. Obtained groups belong to the class of scale groups. Scale groups play an important role in theory of TDLC groups as shown in works of P.-E.~Caprace and G.~Willis~\cite{caprace_w:tdlc_invitation,willis:scale_groups}. We comment on the connection of our results to the work of Willis~\cite{willis:scale_groups} in Subsection~\ref{ssec:connection_to_willis}.

The idea of lifting (the notion that will appear in the next Theorem~\ref{thmx:hnn}) has its roots in the ``trick'' used in~\cite{grigorch:example} to embed the group $\G$ of intermediate growth constructed by the first author~\cite{grigorch:burnside} into a finitely presented group. The presentation of $\G$
\begin{equation}
\label{eqn:presentationLys}
\G=\langle a,b,c,d\mid a^2, b^2, c^2, d^2, bcd, \sigma^i((ad)^4),\sigma^i((adacac)^4), i\geq0\rangle,
\end{equation}
where the substitution $\sigma$ is defined as
\begin{equation}
\label{eqn:sigma_grigorchuk}
\sigma\colon\left\{
\begin{array}{l}
a\mapsto aca\\
b\mapsto d\\
c\mapsto b\\
d\mapsto c
\end{array}\right.
\end{equation}
discovered by I.~Lysionok in~\cite{lysionok:presentation}, leads to the embedding of $\G$ into the ascending HNN extension $\fpG$
\begin{equation}
\label{eqn:presentation}
\fpG=\langle a,b,c,d,t\mid a^2, b^2, c^2, d^2, bcd, (ad)^4,(adacac)^4,tst^{-1}\sigma(s)^{-1}, s\in\{a,b,c,d\}\rangle.
\end{equation}
The latter is a finitely presented amenable, but not elementary amenable group containing a subgroup of intermediate growth. The presentation~\eqref{eqn:presentation} was later simplified by L.~Bartholdi (given in~\cite{harpe_cg:paradoxical}) to the one with only 2 generators and 4 relators of the combined length of 109:
\begin{equation}
\label{eqn:presentation_short}
\fpG\cong\left\langle a,t \mid a^2=TaTatataTatataTataT=(Tata)^8=(T^2ataTat^2aTata)^4=1\right\rangle,
\end{equation}
where $T$ denotes $t^{-1}$.

The further observation made by the first author a long time ago and privately reported to A.~Valette is that $\fpG$ embeds into the group of automorphisms of $\widetilde T_3$ preserving one end of this tree. Hence we have a good example illustrating the theorem of Nebbia~\cite{nebbia:amenability_and_kunze-stein_property88} mentioned earlier. Moreover, as recently shown by C.~Bodart this group does not have rational cross-sections~\cite{bodart:rat_cross-sections}, which is the first example of such a group in the class of finitely presented groups with solvable word problem.

The presentation found by Lysionok attracted a lot of attention and led to the definition (in different forms) of (finite) $L$-\emph{presentation} (also sometimes called \emph{endomorphic presentation})~\cite{bar_gs:branch,bartholdi:epimorphic03,grigorch_ss:img}. It was shown in~\cite{gr99:schur} that the Schur multiplier $H_2(\G,\Z)$ of $\G$ is isomorphic to $(\Z/2\Z)^\infty$ and that the relators in presentation~\eqref{eqn:presentationLys} are independent. This also gave an answer to G.~Bumslag's question raised in~\cite{baumslag:topics_book93}.

The idea of self-similarity that is in the root of the construction of $\G$ got remarkable developments in various directions, including theory of automata groups~\cite{gns00:automata,bondarenko_gkmnss:full_clas32_short}, iterated monodromy groups~\cite{nekrash:self-similar}, combinatorics~\cite{grigorchuk-s:hanoi-cr}, analysis on graphs~\cite{bartholdi_g:spectrum,grigorch_s:hanoi_spectrum,grigorch_z:lamplighter,grigorch_ss:img}, computer science~\cite{gns00:automata,miasnikov_s:cayley_automatic11,miasnikov_s:automatic_graph}, cryptography~\cite{grigorch_g:key_agreement19,myasnikov_u:random_subgroups08,myasnikov_su:non_commutative_crypto_book11,garzon_z:crypto,petrides:cryptoanalysis_grigorchuk,kahrobaei_ms:lba} and coding theory~\cite{cull_n:hanoi_codes99,grigorchuk-s:hanoi-cr}, random walks~\cite{kaimanovich:self-similarity_and_random_walks09,bkn:amenab,amir_av:amenab_linear_autom}. It came also to the theory of TDLC groups as follows from the articles~\cite{caprace_w:tdlc_invitation,willis:scale_groups}. Self-similar groups related to the scale groups have an additional property of being \emph{self-replicating} (see Definition~\ref{def:self-replicating}). Such groups appear quite often among self-similar groups as one can conclude from classification result on groups generated by 3-state invertible automata acting over a 2-letter alphabet~\cite{bondarenko_gkmnss:full_clas32_short}.

The question of interest is which of self-replicating groups lead to examples of scale groups. This question is studied from one angle in~\cite{willis:scale_groups} and represents one of the interests of this work. In Section~\ref{ssec:connection_to_willis} we make a comment on the relation between results of this paper and of~\cite{willis:scale_groups}. Our next result is based on the notion of a liftable self-similar group and a homomorphism of lifting $\sigma\colon G\to\Stg(i)$, where $i$ is one of the vertices of the first level of the tree on which the group $G$ acts, and $\Stg(i)$ is the stabilizer of this vertex in $G$. An example of such lifting is given by substitution~\eqref{eqn:sigma_grigorchuk}, which extends to the injective endomorphism of $\G$. To unify the notation, we will use the same letter $\sigma$ for the liftings in most of the groups that we deal with, unless it is important to emphasize the group that we are dealing with. In the result below $\rootv$ denotes the ``leftmost'' vertex of level 0 in $\widetilde T_{d+1}$ (as shown in Figure~\ref{fig:biinfinite_tree}) and $\pi_{\rootv}\colon \St_{\Aut(\widetilde T_{d+1})}(\rootv)\to\Aut(T_d)$ is a homomorphism that sends each element to its section (or projection) at the vertex $\rootv$, that is an automorphism of a rooted tree $T_d$ with the root $\Lambda$.

\begin{theoremx}
\label{thmx:hnn}
Let $G$ be a liftable group acting on $T_d$ and $\sigma\colon G\to\Stg(i)$ be the corresponding lifting. Then
\begin{itemize}
\item[(i)] There is an embedding $\theta$ of the ascending HNN extension $\widetilde G$ of $G$ by $\sigma$
\begin{equation}
\label{eqn:hnn_pres}
\widetilde G=G*_\sigma=\langle G,t\mid \text{relations in}\ G, tgt^{-1}=\sigma(g)\ \text{for all}\ g\in G\rangle
\end{equation}
into $\Aut(\widetilde T_{d+1})_{\omega}$ for the end $\omega=0^{-\infty}$ of $\widetilde T_{d+1}$.
\item[(ii)] If $G$ acts transitively on the first level of $T_d$, then $G$ is self-replicating, $\theta(\widetilde G)$ acts transitively on the set of vertices of $\widetilde T_{d+1}$, and the closure $W$ of $\theta(\widetilde G)$ in $\Aut(\widetilde T_{d+1})_{\omega}$ is a scale group.
\item[(iii)] The projection $\pi_{\rootv}$ maps $\St_{\theta(\widetilde G)}(\rootv)$ onto $G$ and $\St_{\theta(\widetilde G)}(\rootv)=\bigl(\ker\pi_{\rootv}\cap\theta(\widetilde G)\bigr) \rtimes \theta(G)$.
\item[(iv)] The projection $\pi_{\rootv}$ maps $\St_{W}(\rootv)$ onto $\overline{G}$ and $\St_{W}(\rootv)=\bigl(\ker\pi_{\rootv}\cap W\bigr)\rtimes \theta(\overline{G})$.
\end{itemize}
\end{theoremx}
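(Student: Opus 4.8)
The plan is to build the embedding $\theta$ level by level, using the lifting $\sigma$ to propagate the action of $G$ across the horocyclic structure of $\widetilde T_{d+1}$. First I would fix coordinates: pick the end $\omega = 0^{-\infty}$, let $\{v_n\}_{n\geq 0}$ be the ray from $\rootv = v_0$ toward $\omega$, and identify the subtree hanging ``down'' from each $v_n$ (away from $\omega$) with a copy of the rooted tree $T_d$, with $\rootv$ itself the root of the bottom copy $T_d$. The key observation is that $\widetilde T_{d+1}_\omega$ (automorphisms fixing $\omega$ as a boundary point) decomposes: an automorphism fixing $\omega$ either translates along the ray by some integer amount or fixes it pointwise, and in the latter case is determined by a compatible system of rooted-tree automorphisms on each horocyclic layer. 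To embed $G$, I send each $g \in G$ to the automorphism that acts as $g$ on the bottom copy $T_d$ rooted at $\rootv$, as $\sigma(g)$ on the copy rooted at $v_1$ (which makes sense because $\sigma(g) \in \Stg(i) \leq G \leq \Aut(T_d)$), as $\sigma^2(g)$ on the copy rooted at $v_2$, and so on — and trivially on everything ``to the side'' of $v_0$ not below it. One must check this is a well-defined tree automorphism fixing $\omega$ pointwise: the compatibility condition between consecutive layers is exactly that $\sigma(g)$ fixes the vertex $i$ on level one and its section structure below $v_{n+1}$ restricts correctly below $v_n$ — this is guaranteed by $\sigma$ being a lifting homomorphism into $\Stg(i)$, i.e. by the defining property in Definition~\ref{def:liftable}.

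Next I would define $\theta(t)$ to be the automorphism of $\widetilde T_{d+1}$ that translates one step along the ray toward $\omega$ (sending $v_n \mapsto v_{n+1}$), chosen so that on the newly-exposed subtree below $v_0$ it acts compatibly. The HNN relation $t g t^{-1} = \sigma(g)$ then becomes a direct computation: conjugating the ``layered'' automorphism $\theta(g)$ by the shift $\theta(t)$ reindexes the layers by one, turning $\sigma^n(g)$ on layer $n$ into $\sigma^{n+1}(g)$ on layer $n$, which is precisely $\theta(\sigma(g))$. Faithfulness of $\theta$ on $\widetilde G$ requires the normal-form/Britton's-lemma argument for ascending HNN extensions: any element has the form $t^{-m} g t^{k}$ with $g \in G$ not in $\sigma(G)$ when $k > 0$; I would show its image acts nontrivially by locating a layer far enough down where $\theta(g)$ (hence the whole word, after accounting for the shifts) acts nontrivially on $T_d$ — using faithfulness of $G$ on $T_d$ and injectivity of $\sigma$. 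This establishes (i).

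For (ii): if $G$ is level-one transitive on $T_d$, then $\langle \theta(G), \theta(t)\rangle$ already moves $\rootv$ to any vertex on the ray (via powers of $t$) and $\theta(G)$ moves any first-level vertex below $\rootv$ transitively; an induction up the layers, combined with shifting, shows $\theta(\widetilde G)$ is vertex-transitive on $\widetilde T_{d+1}$. That $G$ is self-replicating follows because transitivity on level one plus the lifting forces the section maps $\pi_i \colon \Stg(i) \to \Aut(T_d)$ to be onto $G$ (the lifting $\sigma$ is a section of $\pi_i$ up to the embedding). Then $W = \overline{\theta(\widetilde G)}$ is closed, vertex-transitive, and fixes $\omega$, so by the cited definition it is a scale group (and by Nebbia's theorem, amenable). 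For (iii) and (iv): $\St_{\theta(\widetilde G)}(\rootv)$ consists of elements of $\widetilde G$ whose image fixes $v_0$; in normal form these are exactly $t^{-m} h t^{m}$ with $h \in G$ — no, more carefully, the subgroup generated by all $t^{-m} G t^{m}$ intersected with the vertex stabilizer — and $\pi_\rootv$ of such an element is a section at $\rootv$, which for $\theta(g)$ itself returns $g$, giving surjectivity onto $G$. The semidirect product decomposition $\St_{\theta(\widetilde G)}(\rootv) = (\ker\pi_\rootv \cap \theta(\widetilde G)) \rtimes \theta(G)$ holds because $\theta(G)$ maps isomorphically onto the image and splits the sequence (it is a literal subgroup acting on $T_d$ as $G$), and $\ker\pi_\rootv$ is normal in the stabilizer. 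Passing to closures and invoking that $\pi_\rootv$ is continuous and $\theta(G)$ is dense in $\theta(\overline{G})$ inside the stabilizer gives (iv) — here one must be slightly careful that the image of the closed stabilizer $\St_W(\rootv)$ is closed and equals $\overline{G}$, which follows because $\pi_\rootv$ restricted to the compact group $\St_{\Aut(\widetilde T_{d+1})}(\rootv)$ is a continuous homomorphism onto a compact group.

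The main obstacle I expect is \textbf{verifying well-definedness of $\theta(g)$ as a genuine automorphism of $\widetilde T_{d+1}$} — that is, checking that the infinite system of rooted-tree automorphisms $(\sigma^n(g))_{n \geq 0}$ placed on successive horocyclic layers glues into a single bijective, adjacency-preserving map. This is where the precise content of Definition~\ref{def:liftable} (that $\sigma$ lands in $\Stg(i)$ and interacts correctly with sections at $i$) must be used in full, and getting the bookkeeping of the layer-identifications consistent — especially which subtree below $v_{n+1}$ is identified with which subtree below $v_n$, and where the ``extra'' branch at each $v_n$ goes — is the delicate part; everything downstream (the HNN relation, transitivity, the stabilizer decomposition) is comparatively formal once the embedding is set up correctly.
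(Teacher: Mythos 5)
Your construction coincides with the paper's: you place $\sigma^n(g)$ at the vertices $0^{-n}$ along the ray toward $\omega$, take $\theta(t)$ to be the shift, verify the relation $tgt^{-1}=\sigma(g)$ by reindexing the layers, and obtain (ii)--(iv) by shifting any vertex into $T_d^{(0)}$ plus the splitting $s=\bigl(s\,\theta(s|_{\rootv})^{-1}\bigr)\cdot\theta(s|_{\rootv})$, exactly as in the paper's proof. The only inessential difference is the justification of injectivity of $\theta$: you invoke Britton normal forms $t^{-m}gt^{k}$ directly, while the paper identifies $\langle\theta(G),\theta(t)\rangle$ as an ascending HNN extension via a commutative diagram with the conjugation map $\eta$ -- the two arguments are equivalent.
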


There are several sources of examples when this theorem is applicable. The first one is the class of self-similar groups acting essentially freely on the boundary of rooted tree. It includes such groups as the lamplighter group $\Z_2\wr\Z$, some of Baumslag-Solitar groups, realizations of free groups and the free products of cyclic groups as self-similar groups~\cite{vorobets:aleshin,vorobets:series_free,savchuk_v:free_prods}. All such groups generated by 3-state automata over 2-letter alphabet are completely classified in~\cite{grigorch_s:essfree}.

Another source of examples consists of groups of branch type admitting finite $L$-presentations satisfying an additional condition that the substitution $\sigma$ involved in the presentation induces a lifting. This class includes the group $\G$, Gupta-Sidki 3-group~\cite{gupta_s:burnside}, Basilica group $\B$~\cite{grigorch_z:basilica}, etc. In some cases, like GGS-groups~\cite{bar_gs:branch} and some of $\G_{\omega}$ groups~\cite{grigorch:degrees}, liftability can be verified directly without the knowledge of a group presentation. It follows from~\cite{grigorch_z:basilica,bartholdi_v:amenab} that the Basilica group is an example of an amenable but not subexponentially amenable group (i.e., whose amenability does not come in any way from groups of subexponential growth). That was the first example of such a group, whose existence was questioned in~\cite{grigorch:hilbert}. As pointed out in~\cite{grigorch_z:basilica_sp}, using the simplified $L$-presentation of $\B$ constructed by Bartholdi, the ascending HNN extension $\fpB$ of Basilica group $\B$ defined in Example~\ref{ex:basilica}, similarly to $\fpG$, is a finitely presented group. Also in~\cite{grigorch_z:basilica_sp} it was shown that $\B$ (and hence $\fpB$) does not belong to the class $SG$ of subexponentially amenable groups introduced in~\cite{grigorch:example} and the question about amenability of these groups was raised.  It got a positive answer by Barthlodi and Virag in~\cite{bartholdi_v:amenab}. More than a half of our examples belong to the class of branch groups introduced by the first author in~\cite{grigorch:jibranch,grigorch:branch}.

As one of the main applications of Theorem~\ref{thmx:hnn} we establish the following embedding result.

\begin{theoremx}
\label{thmx:grig}
The groups $\fpG$ and $\fpB$ embed into $\D(\Q_2)$. Moreover, when $\D(\Q_2)$ is identified with $\Aut(\widetilde T_{3})_{\omega}$, the images of these embeddings act transitively on $\widetilde T_3$ and their closures are scale groups acting 2-transitively on the punctured boundary $\partial_{\omega}\widetilde T_{3}$ of $\widetilde T_{3}$.
\end{theoremx}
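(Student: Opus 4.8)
The plan is to deduce Theorem~\ref{thmx:grig} from Theorem~\ref{thmx:hnn} by exhibiting $\fpG$ and $\fpB$ as the ascending HNN extensions $\widetilde G$ of suitable liftable self-similar groups $G$ acting transitively on the first level of $T_2$, and then identifying $\Aut(\widetilde T_3)_\omega$ with $\D(\Q_2)$ via Theorem~\ref{thmx:isoms}(iii) (here $q=2$, so $T_q=T_2$ and $\widetilde T_{q+1}=\widetilde T_3$). First I would check that $\G$, together with the Lysionok substitution $\sigma$ of~\eqref{eqn:sigma_grigorchuk}, is liftable in the sense of Definition~\ref{def:liftable}: that $\sigma$ extends to an injective endomorphism $\G\to\Stg(i)$ for one of the two first-level vertices $i$, compatible with the self-similar structure (this is essentially the content of Lysionok's and the first author's ``trick'' recalled in the introduction, so the verification is to match the hypotheses of Definition~\ref{def:liftable} against the known wreath-recursion for $\G$). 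Granted this, the HNN extension $G*_\sigma$ is, by the presentation~\eqref{eqn:presentation} compared with~\eqref{eqn:hnn_pres}, exactly $\fpG$; since $\G$ acts transitively on level~1 of $T_2$, Theorem~\ref{thmx:hnn}(i)--(ii) gives an embedding $\theta\colon\fpG\hookrightarrow\Aut(\widetilde T_3)_\omega$ with $\theta(\fpG)$ vertex-transitive on $\widetilde T_3$ and closure $W$ a scale group. The same argument applied to the Basilica group $\B$ and its simplified $L$-presentation (Bartholdi's, as used in~\cite{grigorch_z:basilica_sp}) realizes $\fpB$ as $\B*_\sigma$ and yields the analogous conclusion; this is the group called $\fpB$ in Example~\ref{ex:basilica}, so I would cite that example for the liftability of $\B$.

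Next I would pin down the target group as $\D(\Q_2)$. The field $\Q_2$ is a non-Archimedean local field with ring of integers $\Z_2$ and residue field $\mathbb F_2$ of size $q=2$, so Theorem~\ref{thmx:isoms}(iii) gives a canonical isomorphism $\D(\Q_2)\cong\Aut(\widetilde T_3)_\omega$. Composing $\theta$ with this isomorphism produces the desired embeddings $\fpG\hookrightarrow\D(\Q_2)$ and $\fpB\hookrightarrow\D(\Q_2)$, and under the identification the images are vertex-transitive on $\widetilde T_3$ and have scale-group closures, as already noted.

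The remaining and genuinely new point is the \emph{2-transitivity} on the punctured boundary $\partial_\omega\widetilde T_3$, i.e.\ on the set of ends $\neq\omega$. The natural strategy: the punctured boundary of $\widetilde T_3$ is in bijection with the boundary $\partial T_2$ of the rooted binary tree hanging at $\rootv$ (this is how $F$ versus $\VR$ is organized in the proof of Theorem~\ref{thmx:isoms}), and $\partial T_2$ carries the transitive action of $\overline{G}$ — more precisely, by Theorem~\ref{thmx:hnn}(iii)--(iv), $\St_{\theta(\widetilde G)}(\rootv)$ projects onto $G$ and $\St_W(\rootv)$ projects onto $\overline G$. So 2-transitivity of $W$ on $\partial_\omega\widetilde T_3$ reduces, via first using vertex-transitivity (hence boundary-transitivity relative to $\omega$) of $W$ to move any one prescribed end to a fixed end $\xi_0$ realized through $\rootv$, to showing that the stabilizer of $\xi_0$ in $W$ — which contains $\pi_\rootv^{-1}(\overline G)\cap W$, hence surjects onto $\overline G$ — acts transitively on $\partial T_2\setminus\{\xi_0\}$. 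That last statement is a property of the profinite closure $\overline G\le\Aut(T_2)$: one needs that $\overline G$ acts transitively on pairs (root-to-boundary ray, second boundary point), equivalently that for $\G$ (resp.\ $\B$) the rigid/level stabilizers are rich enough that $\overline G$ is boundary-2-transitive. For $\G$ this should follow from its known branch/just-infinite structure and the fact that $\overline\G$ contains the level stabilizers' commutator subgroups with full support (it is finitely constrained, acting 2-transitively on the boundary is a standard consequence); for $\B$ one uses the self-replicating (self-similar, transitive, and recurrent) property established for Basilica.

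I expect this boundary-2-transitivity of $\overline G$ to be the main obstacle: vertex-transitivity of $\theta(\widetilde G)$ and the identification with $\D(\Q_2)$ are immediate from the cited theorems, but proving that $\overline\G$ and $\overline\B$ act 2-transitively on $\partial T_2$ requires invoking (and correctly combining) the branch-group machinery — level-transitivity plus transitivity of the first-level section groups on lower levels, iterated — or alternatively a direct computation with the wreath recursions of $\G$ and $\B$ showing that, after fixing the first coordinate of a boundary point, the section subgroup still acts transitively on the complementary subtree's boundary. I would handle $\G$ and $\B$ separately, in each case first establishing that the closure is self-replicating in the strong sense and then bootstrapping transitivity on $\partial T_2$ to 2-transitivity by the standard inductive argument on levels.
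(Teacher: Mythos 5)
Your first two steps coincide with the paper's: liftability of $\G$ via the Lysionok substitution and of $\B$ via $\sigma_{\mathcal B}$ (Examples~\ref{ex:grigorchuk} and~\ref{ex:basilica}), Theorem~\ref{thmx:hnn}(i)--(ii) for the embedding, vertex transitivity and the scale-group closure, and Theorem~\ref{thmx:isoms}(iii) with $q=2$ to identify the target with $\D(\Q_2)$; that part is correct and is exactly how the paper proceeds. The gap is in the 2-transitivity step, which you correctly identify as the new content but do not actually establish. Your reduction asks for ``boundary-2-transitivity of $\overline{G}$ on $\partial T_2$,'' and as literally stated this is impossible: $\overline{G}\le\Aut(T_2)$ acts by isometries of the ultrametric on $\partial T_2$, hence preserves the level of first disagreement of any pair of ends and can never be 2-transitive there. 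What is actually needed --- and what the paper isolates as the key lemma --- is the orbit statement that the stabilizer of the vertex $1^l$ in $G$ acts transitively on $0X^{l-1}$ for every $l\ge 1$; powers of the shift $\theta(t)$ then normalize the levels of the two pairs, and a compactness argument inside the (compact) stabilizer of $\rootv$ in $\Aut_0(\widetilde T_3)_\omega$ passes from agreement up to level $l$ to an exact element of the closure. For $\G$ this orbit statement is Lemma~9.10 of~\cite{bartholdi_g:parabolic}, which the paper quotes; for $\B$ the paper proves the analogue by induction on $l$ using the weakly regular branch relation $\B'\succ\B'\times\B'$ together with the explicit elements $[a,b]=(a,b^{-1}a^{-1}b)$ and $b=(a,1)$.

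Moreover, your proposed justification for the Basilica case --- that it follows from $\B$ being self-replicating (self-similar, level-transitive, recurrent) --- would not suffice even for the corrected statement: the binary adding machine is self-replicating and level-transitive, yet the stabilizer of $1^l$ in it is trivial on $0X^{l-1}$ and its closure $\Z_2$ acts simply transitively on $\partial T_2$, so no 2-transitivity of the associated scale group can be extracted from these properties alone. Some branch-type input specific to $\B$ (the weak branching over $\B'$, as in the paper) is indispensable, and for $\G$ one must either cite or reprove the parabolic-subgroup orbit computation; your sketch gestures at ``branch-group machinery'' but does not supply either ingredient, so the central claim of the theorem remains unproved in your proposal.
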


It is shown in~\cite{willis:scale_groups} that every scale group acts transitively on the corresponding punctured boundary $\partial_{\omega}\widetilde T_{d+1}$. The 2-transitivity condition in the above theorem is a stronger statement.

Finally, at the end of the article we discuss one more method of constructing scale groups. It is based on the use of some facts from the Bass-Serre theory of groups acting on trees and not finitely co-Hopfian groups, i.e., groups that contain a subgroup of finite index isomorphic to the whole group. The class of scale-invariant groups studied in~\cite{nekrashevych_p:scale_invariant} is suitable for our purposes and allows us to construct more examples of scale groups. One such example is obtained as a closure of a group without Haagerup property. Observe that the closure of a group with Kazhdan's property (T) cannot be a scale group since this property implies Serre's property (FA)~\cite{watatani:kazhdan_implies_fa82}, which means that every action of a group on a tree has a global fixed point. Nevertheless we use property (T) in Section~\ref{sec:haagerup} to construct groups without Haagerup property whose closures are scale groups. Thus (and we emphasize it), the examples of scale groups given in this article are of two kinds: obtained as the closures of (elementary and non-elementary) amenable groups and as the closures of non-amenable groups.

The paper is organized as follows. We begin with preliminaries in Section~\ref{sec:pre}. In Section~\ref{sec:isometries} we describe the identification of the groups of isometries and dilations of local fields and their rings of integers with groups of automorphisms of regular rooted and unrooted trees. Section~\ref{sec:embedding} contains the definition of liftable groups and the main embedding result. In Section~\ref{sec:examples} we provide many examples of liftable groups and analyze the actions of their HNN extensions on corresponding regular trees. Section~\ref{sec:bass} offers an alternative method of constructing of actions of ascending HNN extentions on regular trees via the Bass-Serre theory (see Theorem~\ref{thm:bass_serre}). Finally, in Section~\ref{sec:haagerup} we use some scale-invariant groups to provide new examples of a scale groups. Note that the paper contains a rather extended bibliography as we tried to unite various notation and approaches used in the literature.

\noindent{\textbf{Acknowledgements.} The authors would like to thank A.~Garrido, A.~Minasyan, V.~Nekrashevych, A.~Valette, and G.~Willis for fruitful discussions that enhanced the paper. The work on this project was partially conducted during the authors' visits to American Institute of Mathematics via the SQuaRE program; authors thank the Institute for hospitality and support. The first author was supported by the Deutsche Forschungsgemeinschaft (DFG, German Research Foundation) -- SFB-TRR 358/1 2023-491392403,  Humboldt Foundation and expresses his acknowledgement to the University of Bielefeld.   Also  he   is  supported  by  Travel Support for Mathematicians  grant  MP-TSM-00002045  from Simons Foundation. Finally, the authors appreciate the support from University of Geneva (Swiss NSF grant
200020-200400), Texas A\&M University and University of South Florida during working on the paper.}

\section{Preliminaries}

\label{sec:pre}
We start this section by introducing the notions and terminology of automorphisms of regular rooted trees, self-similar groups, and transformations generated by Mealy automata. For more details, the reader is referred to~\cite{gns00:automata,nekrash:self-similar}.

\subsection{Rooted trees and self-similar groups}

Let $X=\{0,1,\ldots ,d-1\}$ be a finite alphabet with $d\geq 2$ elements (called letters) and let $X^*$ denote the set of all finite words over $X$. The set $X^*$ can be equipped with the structure of a rooted $d$-ary tree $T(X)$ by declaring that $v$ is adjacent to $vx$ for every $v\in X^*$ and $x\in X$. As a graph the tree $T(X)$ is isomorphic to $T_d$, but the vertices of $T(X)$ are labelled by finite words over $X$. We will denote the length of a word $w\in X^*$ by $|w|$. The empty word corresponds to the root of the tree and for each positive integer $n$ the set $X^n$ corresponds to the $n\,$th level of $T(X)$. Also the set $X^\infty$ of infinite (to the right) words over $X$ can be identified with the \emph{boundary} $\partial T(X)$ of the tree $T(X)$, which consists of all infinite paths in the tree, without backtracking, initiating at the root. We consider automorphisms of the tree $T(X)$ (that is, the bijections of $T(X)$ that preserve the root and the adjacency of vertices). We sometimes denote the tree $T(X)$ as $T_{|X|}$ when only the cardinality of $X$ is important. The group of all automorphisms of $T(X)$ is denoted by $\Aut(T(X))$. To operate with such objects, we use alternatively the languages of sections and wreath recursions, and of Mealy automata.

Let $g\in\Aut(T(X))$ and $x\in X$. For any $v\in X^*$ we can write \[g(xv)=g(x)v'\] for some $v'\in X^*$. Then the map $g|_x\colon X^*\to X^*$ given by \[g|_x(v)=v'\] defines an automorphism of $T(X)$ which we call the \emph{section} (or \emph{state}) of $g$ at vertex $x$. We can inductively extend the definition of a section at a letter $x\in X$ to a section at any vertex $x_1x_2\ldots x_n\in X^*$ as follows:
\[g|_{x_1x_2\ldots x_n}=g|_{x_1}|_{x_2}\cdots|_{x_n}.\]
With this notation for any $g\in\Aut(T(X))$, and any words $v,w\in X^*$ we have
\begin{equation}
\label{eqn:section_words}
g(vw)=g(v)g|_v(w).
\end{equation}

\begin{definition}
A subgroup of $\Aut(T(X))$ is called \emph{self-similar} if it is closed under taking sections at the vertices of $T(X)$.
\end{definition}

%We note here that this standard definition of a self-similar group is different from the one used in~\cite{willis:scale_groups}, where only the sections of elements in the stabilizers of vertices at those vertices are required to remain in the group.

We adopt the following convention throughout the paper.
\begin{convention}
\label{conv:product}
If $g$ and $h$ are elements of some group acting on a set $Y$ and $y\in Y$, then
$$gh(y)=h(g(y)).$$
\end{convention}
We work with the right actions partially because it corresponds to the standard order of multiplication of permutations in \verb"GAP", the computer algebra platform containing the package \verb"AutomGrp"~\cite{muntyan_s:automgrp} dealing with self-similar groups and semigroups and used in this paper for many routine calculations. However, we due convenience in notation, we still write the arguments of functions on the right. Note that knowledge of GAP is not required for reading this paper or for verification of its results.

With the above convention, the state $g|_v$ at $v\in X^*$ of any product $g=g_1g_2\cdots g_n$, where $g_i\in\Aut(T(X))$ for $1\leq i\leq n$, can be computed as follows:
$$g|_v=g_1|_v g_2|_{g_1(v)}\cdots
g_n|_{g_1g_2\cdots g_{n-1}(v)}.$$

Also we use the language of wreath recursions. For each self-similar group $G$ there is a natural embedding
\[G\hookrightarrow G \wr \Sym(X),\]
where $\wr$ is a symbol for the permutational wreath product, and $\Sym(X)$ denotes the symmetric group on $X$. This embedding is given by
\begin{equation}
\label{eq:wreath}
G\ni g\mapsto (g_0,g_1,\ldots,g_{d-1})\sigma_g\in G\wr \Sym(X),
\end{equation}
where $g_0,g_1,\ldots,g_{d-1}$ are the states of $g$ at the vertices of the first level, and $\sigma_g$ is the transformation of $X$ induced by the action of $g$ on the first level of the tree. If $\sigma_g$ is the trivial permutation, it is customary to omit it in~\eqref{eq:wreath}. We call $(g_0,g_1,\ldots,g_{d-1})\sigma_g$ the \emph{decomposition of $g$ at the first level} (or the \emph{wreath recursion of $g$}). When this does not cause confusion, we identify $g$ with its wreath recursion and write simply
\begin{equation}
\label{eqn:decomposition}
g=(g_0,g_1,\ldots,g_{d-1})\sigma_g.
\end{equation}

The decomposition at the first level of all generators $\A_q$ of an automaton group $\mathbb{G}(\A)$ under the embedding~\eqref{eq:wreath} is called the \emph{wreath recursion defining the group}. Such a decomposition is especially convenient for computing the states of group elements. Indeed, the products and inverses of automorphisms can be found as follows. If $g=(g_0,g_1,\ldots,g_{d-1})\sigma_g$ and $h=(h_0,h_1,\ldots,h_{d-1})\sigma_h$ are two elements of $\Aut(T(X))$, then $$gh=\left(g_0h_{\sigma_g(0)},g_1h_{\sigma_g(1)},\ldots,g_{d-1}h_{\sigma_g(d-1)}\right)\sigma_g\sigma_h$$ and the wreath recursion of $g^{-1}$ is
$$g^{-1}=\left(g^{-1}_{\sigma_g^{-1}(0)},g^{-1}_{\sigma_g^{-1}(1)},\ldots,g^{-1}_{\sigma_g^{-1}(d-1)}\right)\sigma_g^{-1}.$$
Note that the order of multiplication here is compatible with Convention~\ref{conv:product}. Some authors choose to use the left actions and they write permutations on the left side in wreath recursions. These approaches are equivalent and there is an easy translation between them: every product has to be reversed everywhere.

Equation~\eqref{eqn:section_words} for an automorphism $g$ given by decomposition~\eqref{eqn:decomposition}, and for $v=x\in X$ then can be written as:
\[g(xw)=\sigma_g(x)g_x(w).\]

For a group $G$ acting on $T(X)$ and $v\in X^*$ the stabilizer $\Stg(v)$ of vertex $v$ is the subgroup of $G$ consisting of all elements that fix $v$. For each $m\geq 1$ the stabilizer $\Stg(X^m)$ of level $m$ in $G$ is the subgroup of $G$ consisting of all elements that fix all vertices of level $m$. Stabilizers of levels are normal finite index subgroups of $G$ such that
\[\cap_{m\geq1}\Stg(X^m)=\{1\}.\]
For each $v\in X^*$ the map
\begin{equation}
\label{eqn:proj}
\begin{array}{llll}\pi_v\colon &\St_{\Aut(T(X))}(v)&\to&\Aut(T(X))\\
&g&\mapsto&g|_v
\end{array}
\end{equation}
defines a homomorphism that we will call projection. For each subgroup $G$ of $\Aut(T(X))$ homomorphism $\pi_v$ restricts to a homomorphism $\Stg(v)\to\Aut(T(X))$. Moreover, if $G$ is a self-similar group, then since $G$ is closed under taking the sections, $\pi_v$ is a homomorphism from $\Stg(v)$ to $G$. In the case $X=\{0,1,\ldots,d-1\}$, $\pi_i$, $i=0,1,\ldots,d-1$, denotes the corresponding projection.

An important subclass of self-similar groups is the class of self-replicating groups.

\begin{definition}
\label{def:self-replicating}
A self-similar group is called \emph{self-replicating} if it acts transitively on $X$ and for each $v\in X^*$ the map $\pi_v\colon\Stg(v)\to G$ is onto.
\end{definition}

It is easy to see that in the above definition we can consider only projections $\pi_v$ for $v\in X$.

It follows from the definition by induction on the level that self-replicating groups act transitively on all levels of $T(X)$.

We can visualize the action of an automorphism $g\in\Aut(T(X))$ using the notion of the \emph{portrait} of $g$, denoted by $\mathcal P(g)$. It is a labeled infinite rooted $|X|$-ary tree with the root labeled by $g$ and each vertex $v$ labeled by $g|_v$. Under each vertex $v$, we write the name of the mapping that $g|_v$ defines on the first level of the subtree hanging from $v$. In the case of a rooted binary tree, we draw an arc (called switch) connecting the two edges hanging down from $v$ if $h|_v$ acts nontrivially on the first level of the subtree hanging from $v$. If there is no switch, it means the action is trivial. Four examples of portraits corresponding to the generators of the group $\G$ are shown in Figure~\ref{fig:grig_gens}.

\subsection{Mealy Automata}
Another common language to describe self-similar groups is the language of Mealy automata.
\begin{definition}
A \emph{Mealy automaton} (or simply \emph{automaton}) is a 4-tuple \[(Q,X,\delta,\lambda),\]
where
\begin{itemize}
\item $Q$ is a set of states
\item $X$ is a finite alphabet (not necessarily $\{0,1,\ldots,d-1\}$)
\item $\delta\colon Q\times X\to Q$ is the \emph{transition function}
\item $\lambda\colon Q\times X\to X$ is the \emph{output function}.
\end{itemize}
If the set of states $Q$ is finite, the automaton is called \emph{finite}. If for every state $q\in Q$ the output function $\lambda_q(x)=\lambda(q,x)$ induces
a permutation of $X$, the automaton $\A$ is called \emph{invertible}. Selecting a state $q\in Q$ produces an \emph{initial automaton} $\A_q$, which formally is a $5$-tuple $(Q,X,\delta,\lambda,q)$.
\end{definition}

Here we consider automata with the same input and output alphabets.

Automata are often represented by their \emph{Moore diagrams}. The Moore diagram of automaton $\A=(Q,X,\delta,\lambda)$ is a directed graph in which the vertices are in bijection with the states of $Q$ and the edges have the form $q\stackrel{x|\lambda(q,x)}{\longrightarrow}\delta(q,x)$ for $q\in Q$ and $x\in X$. For example, Figure~\ref{fig:grig_aut} shows the Moore diagram of the automaton $\A$ that, as is explained later, generates the group $\G$.

\begin{figure}
\begin{center}
\epsfig{file=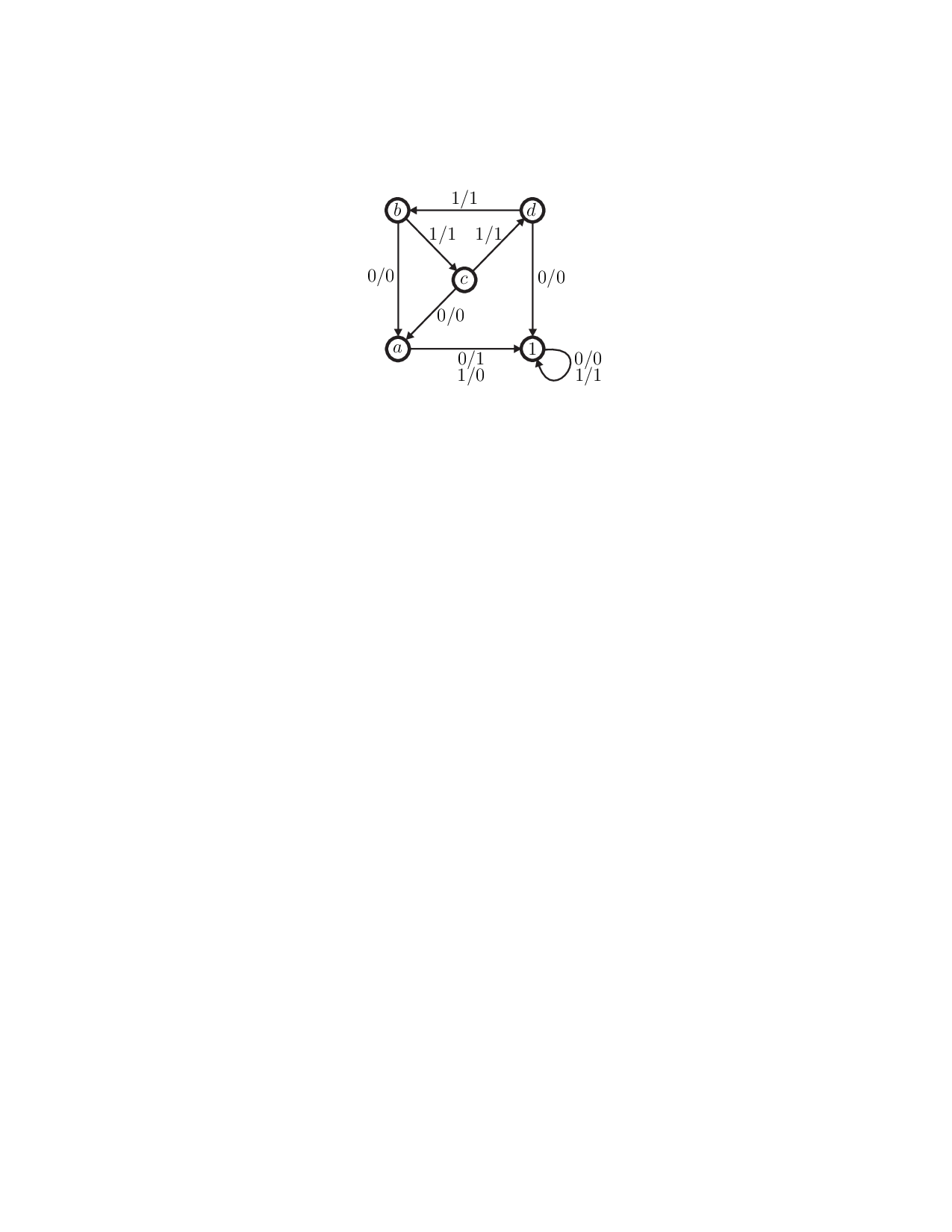}
\end{center}
\caption{The automaton generating the group $\G$\label{fig:grig_aut}}
\end{figure}

Every invertible initial automaton $\A_q$ induces an automorphism of $T(X)$, which is also denoted by $\A_q$, defined as follows. Given a word
$v=x_1x_2x_3\ldots x_n\in X^*$, it scans the first letter $x_1$ and outputs $\lambda(q,x_1)$. The rest of the word is handled similarly by the initial automaton $\A_{\delta(q,x_1)}$. So we can actually extend the functions $\delta$ and $\lambda$ to $\delta\colon Q\times X^*\to Q$ and $\lambda\colon Q\times X^*\to X^*$ via the equations
\[\begin{array}{l}
\delta(q,x_1x_2\ldots x_n)=\delta(\delta(q,x_1),x_2x_3\ldots x_n),\\
\lambda(q,x_1x_2\ldots x_n)=\lambda(q,x_1)\lambda(\delta(q,x_1),x_2x_3\ldots x_n).\\
\end{array}
\]

\begin{definition}
The group generated by all states of an invertible automaton $\A$ viewed as automorphisms of the rooted tree $T(X)$ under the operation of composition is called an \emph{automaton group}.
\end{definition}

In the definition of the automaton, we do not require the set $Q$ of states to be finite. With this convention, the notion of an automaton group is equivalent to the notions of \emph{self-similar group}~\cite{nekrash:self-similar} and \emph{state-closed group}~\cite{nekrash_s:12endomorph}. However, most of the interesting examples of automaton (semi)groups are finitely generated (semi)groups defined by finite automata.

\subsection{The boundary of a rooted tree as a topological and measure space}
The boundary $\partial T(X)$ of the rooted tree is endowed with a natural topology in which two infinite words are close if they have a large common prefix. Each automorphism of $T(X)$ naturally induces a continuous transformation (respectively, homeomorphism) of $\partial T(X)$.

The boundary $\partial T(X)$ can be viewed as an (ultra)metric space in which the distance between two points is equal to $\lambda_k$, where $\{\lambda_n\}$ is any monotonically decreasing sequence converging to 0, and $k$ is the length of the longest common beginning of these two points.
This metric defines a topology on $\partial T(X)$ that coincides with the Tychonoff product topology on $X^{\mathbb N}$ (when using the discrete topologies on each copy of $X$. In this topology $\partial T(X)$ is homeomorphic to the Cantor set. Let $\mu$ be the uniform Bernoulli measure defined by
%The topological structure induces the Borel structure on $\partial T(X)$ and the uniform Bernoulli measure on this space:
\[\mu(C_v)=\frac1{|X|^n},\]
where for a vertex $v\in X^n$ the cylindrical set $C_v\subset\partial T(X)$ consists of all infinite paths in $\partial T(X)$ that go through $v$. Such sets generate a $\sigma$-algebra of Borel sets and hence measure $\mu$ is defined by its values on them. This measure is invariant with respect to the action of $\Aut(T(X))$. A action of a countable group $G<\Aut(T(X))$ on $(\partial T(X),\mu)$ by measure preserving transformations is called essentially free if the measure of a set of points in $X$ that have non-trivial stabilizers is equal to 0. The examples of such groups will be given in Section~\ref{sec:examples}.

It is a well known fact (see, for example,~\cite{gns00:automata}) that if a group $G<\Aut(T(X))$ acts transitively on levels of $T(X)$, then its action on $\partial T(X)$ is ergodic and its closure in $\Aut(T(X))$ acts transitively on $\partial T(X)$. More generally, the orbit structure of the action of $G$ on the levels of $T(X)$ gives rise to the decomposition into ergodic components of the action of $G$ on $\partial T(X)$~\cite{grigorch_s:ergodic_decomposition}.
%Also, level transitivity of the action of $G$ on $V(T(X))$ is equivalent to ergodicity of its action on $(\partial T(X),\mu)$ for the uniform Bernoulli measure $\mu$.

\subsection{Historical remarks}
\label{ssec:historical}
The reader should be aware that during the period 1999--2003, when there was an explosion of the activity on self-similar and branch groups, the terminology associated with the groups acting on rooted trees and, in particular, with self-similar groups was changing. Now it is more or less settled, but still there are nuances. In the articles~\cite{gr99:schur,grigorch:branch,grigorch_hz:profinite_completions00,gns00:automata} the terms ``self-similar'', ``self-replicating'', ``branch'', ``regular branch'', etc. already appear, but sometimes with different meaning. For instance, the term ``self-similar'' action of a group was used in the past to denote what is now called ``self-replicating'' (or ``recurrent'' in~\cite{nekrash:self-similar}).

Currently most of the authors use the definition of a self-similar group presented in~\cite{bartholdi_gn:fractal,nekrash:self-similar} as used in this article, which we will call \emph{standard}.  However, an early version of~\cite{willis:scale_groups} by G.~Willis used a slightly different definition, calling a group self-similar if the projection of the stabilizer of every vertex in the tree on the subtree rooted at that vertex remains in the original group. This condition corresponds to the definition of a \emph{fractal} group in~\cite[Definition~2.1]{bartholdi_g:spectrum}. The following remarks aim to clarify the distinction between these two definitions. Any automorphism $g$ acting on the binary rooted tree with the wreath recursion $g=(h,h^{-1})\varepsilon$ for arbitrary $h\in\Aut(T_2)$ would generate a fractal group isomorphic to $\Z/2\Z$ as the stabilizer of every vertex is trivial. Clearly, this group would not be self-similar (in the standard sense) if $h$ is not a power of $g$.

Another example constructed by A.~Garrido shows that there are self-replicating groups that are fractal, but not self-similar (in the standard sense). Consider the infinite dihedral group $D_\infty$ acting on a binary rooted tree and generated by the following wreath recursion:
\[\begin{array}{rcl}
a&=&(1,1)\varepsilon,\\
b&=&(a,b).
\end{array}
\]
Let $H=\langle ab\rangle\cong\Z$ be the index 2 subgroup in $D_\infty$. Then, since $ab=(b,a)\varepsilon$ we get
\[(ab)^{2^n}=\bigl((ba)^{2^{n-1}},(ab)^{2^{n-1}}\bigr)\]
for $n\geq 1$ and the stabilizer $\St_H(v)$ in $H$ of any vertex $v$ of level $n$ is generated by $(ab)^{2^n}$. Therefore, $\pi_v\colon\St_H(v)\to H$ projects $\St_H(v)$ onto $H$ and $H$ is self-replicating and fractal. However, clearly $H$ is not self-similar in the standard sense because $(ab)|_0=b\notin H$.

On the other hand, the following proposition shows that every fractal group is conjugate to a self-similar group (in the standard sense), so no difference between the who classes can be detected by analyzing the algebraic structure of these groups. Note, that this small difference does not usually play a significant role, but it is important to understand it.

\begin{proposition}[V.~Nekrashevych]
Let $G$ be a group acting on the tree $X^*$ so that for every $g\in G$ stabilizing
a vertex $v$ we have $g|_v\in G$. Then the action of $G$ is conjugate to a self-similar
action.
\end{proposition}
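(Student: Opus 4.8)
The plan is to construct an automorphism $\phi\in\Aut(T(X))$ for which $\phi G\phi^{-1}$ is self-similar; conjugating the given action by $\phi$ then produces the desired self-similar action. Since sections compose as $g|_{vw}=(g|_v)|_w$, a subgroup of $\Aut(T(X))$ is self-similar as soon as it is closed under taking sections at the first-level vertices $x\in X$, so it is enough to arrange this for $\phi G\phi^{-1}$; moreover, since self-similarity puts no constraint on the action on the first level, I will look for a $\phi$ that acts trivially there. The construction uses a transversal for the $G$-action on $X$: for each orbit $O$ of $G$ on $X$ fix a representative $x_O\in O$, and for each $x\in O$ choose $t_x\in G$ with $t_x(x_O)=x$ and $t_{x_O}=1$. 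Write $g_x:=t_x|_{x_O}\in\Aut(T(X))$ for the corresponding transversal sections; these need not lie in $G$, and that is precisely where the failure of self-similarity is concentrated.

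The algebraic heart is a telescoping identity. Given $g\in G$ and $x\in X$, set $y:=g(x)$ (which lies in the same $G$-orbit $O$ as $x$) and $u:=t_x\,g\,t_y^{-1}\in G$. A short computation with Convention~\ref{conv:product} shows that $u$ fixes $x_O$, so the hypothesis on $G$ gives $u|_{x_O}\in G$; expanding the section at $x$ of the factorization $g=t_x^{-1}\,u\,t_y$ by means of the formula for the section of a product then yields
\[
g|_x \;=\; g_x^{-1}\,\bigl(u|_{x_O}\bigr)\,g_y .
\]
Thus, once the fixed automorphisms $g_\bullet$ are stripped away, the section $g|_x$ reduces to the honest group element $u|_{x_O}\in G$.

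Next I define $\phi$ by the self-referential recursion ``$\phi$ acts trivially on the first level, and $\phi|_x:=\phi\cdot g_x$ for every $x\in X$''. This prescription has a unique solution $\phi$, and it is a genuine automorphism of $T(X)$: the permutation carried by $\phi$ at a vertex of length $n+1$ is determined through the recursion by the permutations carried by $\phi$ at vertices of length $\le n$ together with the fixed, known portraits of the $g_x$, so the portrait of $\phi$ is built unambiguously level by level. With this $\phi$, combining the section-of-a-product formula, the fact that $\phi$ fixes the first level, and the identity above gives, for all $g\in G$ and $x\in X$ with $y=g(x)$,
\[
\begin{aligned}
(\phi g\phi^{-1})|_x
&=\phi|_x\cdot g|_x\cdot(\phi|_y)^{-1}\\
&=(\phi g_x)\bigl(g_x^{-1}(u|_{x_O})g_y\bigr)(\phi g_y)^{-1}
=\phi\,\bigl(u|_{x_O}\bigr)\,\phi^{-1}\;\in\;\phi G\phi^{-1}.
\end{aligned}
\]
Hence $\phi G\phi^{-1}$ is closed under sections at first-level vertices, so it is self-similar, and it is conjugate to the given action of $G$ by construction.

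The step I expect to be the genuine obstacle is finding the correct self-referential form of $\phi$: the correction inside $\phi|_x$ must be $\phi$ itself post-composed with the transversal section $g_x$, precisely so that the three factors in $(\phi g\phi^{-1})|_x$ telescope to the single element $\phi(u|_{x_O})\phi^{-1}$ of $\phi G\phi^{-1}$; and one must then check that this recursive prescription is well founded and really does define an automorphism of $T(X)$. The remaining items — carrying Convention~\ref{conv:product} correctly through the product-section formulas, and handling the general orbit structure of $G$ on $X$ rather than assuming transitivity on the first level — are routine.
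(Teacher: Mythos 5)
Your proof is correct and is essentially the paper's own argument: you pick the same transversal data $t_x$ with sections $g_x=t_x|_{x_O}$ (the paper's $r_x$, $h_x$) and conjugate by the same recursively defined automorphism that is trivial on the first level and carries $\phi$ twisted by $g_x$ at each $x$, so that the sections of conjugated elements telescope to sections of elements of $G$ stabilizing the orbit representative. The only differences are cosmetic — the side of conjugation and the order of composition in the recursion, plus your explicit factorization $g=t_x^{-1}ut_y$, which is the same observation the paper phrases as ``$h^{-1}_{g(x)}g|_xh_x$ is the section of an element of $G$ stabilizing $x'$''.
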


\begin{proof}
Choose a representative in each orbit of the first level, and choose for every
$x\in X$ an element $r_x\in G$ moving the representative $x'$ of the orbit of $x$ to $x$. Denote
$h_x = r_x|_{x'}$. Conjugate then the action of $G$ by the automorphism $a$ of $X^*$
defined by the recursion:
\[a(xv) = x(h_xa)(v).\]

Then $xv = a^{-1}(x(h_xa)(v))$, so, replacing $(h_xa)(v) = w$, we get $a^{-1}(xw) = xa^{-1}h^{-1}_x(w)$ and for every $g\in G$ we have
\[
a^{-1}ga(x) = a^{-1}g(x(h_xa)(v))
= a^{-1}g(x)(g|_xh_xa)(v)
= g(x)(a^{-1}h^{-1}_{g(x)}g|_xh_xa)(v).
\]
The element $h^{-1}_{g(x)}g|_xh_x$ is equal to the section of $r^{-1}_xgr_x$ at the representative $x'$ of the orbit of x, since we have $r_x(x')=x$, $r_x|_{x'}=h_x$, $r_{g(x)}=r_x$. The element  $r^{-1}_xgr_x$ stabilizes $x'$, so its section at $x'$ belongs to $G$. In the conjugated action, the section of $g$ at $x$ is equal to $(r^{-1}_xgr_x)|_{x'}$, hence belongs to $G$.
\end{proof}

Additionally, other terms have been used to denote the class of self-similar groups, like ``automaton groups'' and ``state-closed groups'' (mostly in the school of S.~Sidki). In some papers, a group is called self-similar if it is generated by all the states of a finite automaton, which is again different from the standard sense since in the latter one there is no condition of finiteness. In such papers sometimes standard self-similar groups are called ``weakly self-similar''. Also, the terms of ``fractal'' and ``semifractal'' were used in the past to denote self-similar or self-replicating groups. Finally, the terms ``self-similar'' and self-replicating were also used in earlier papers to denote regular branch groups and branching subgroups~\cite{grigorch_hz:profinite_completions00}. The reader should be aware of these differences and nuances.

\section{Isometries and dilations of local fields and their rings of integers}
\label{sec:isometries}
In this section, for a (non-Archimedean) local field $F$ with a ring of integers $\mathcal O$, we identify the groups $\Isom(\mathcal O)$ of isometries of $\mathcal O$ and $\D(F)$ of dilations of $F$ with groups of automorphisms of regular trees.

\subsection{Local fields}
Recall that a discrete valuation $v$ on a commutative field $F$ is a homomorphism from the multiplicative group $F^*$ onto $\Z$ satisfying $v(a+b)\geq\inf\{v(a),v(b)\}$ for all $a,b\in F$, taking into account the convention $v(0)=\infty$. Let $\mathcal O=\{a\in F\colon v(a)\geq 0\}$ be the \emph{ring of integers} of $F$ and $\mathfrak m$ be the maximal ideal in $\mathcal O$. Then $F$ is called a \emph{(non-archimedean) local field} if
\begin{itemize}
\item[(i)] the cardinality $q$ of the \emph{residue field} $\mathcal O/\mathfrak m$ is finite,
\item[(ii)] $F$ is complete when equipped with the metric $d_v(a,b)=q^{-v(a-b)}$.
\end{itemize}

Examples of local fields include the field $\Q_p$ of $p$-adic numbers and its finite extensions and the field of formal Laurent series $F_q((x))$ over a field $F_q$ of finite characteristic $p$.

Each element $\pi\in\mathfrak m$ with $v(\pi)=1$ generates $\mathfrak m$ as an ideal in $\mathcal O$. Let $X\subset F$ be the set of representatives of the residue field such that $0\in X$. Then each element $a\in F^*$ can be uniquely written as a formal Laurent series
\begin{equation}
\label{eq:series}
a=\sum_{i=n}^\infty x_i\pi^i, x_i\in X, x_n\neq 0,
\end{equation}
where $n=v(a)\in\Z$. This identification can be used to identify any local field $F$ as a metric space with metric $d_v$ with the punctured boundary $\partial_\omega\widetilde T_{q+1}=\partial_\omega\widetilde T_{q+1}-\omega$ of the $(q+1)$-regular tree $\widetilde T_{q+1}$ for a fixed end $\omega$ of the tree, equipped with the corresponding ultrametric. Under this identification the ring of integers $\mathcal O$ is identified with the boundary of a rooted tree $T_q$ that corresponds to the series of the form~\eqref{eq:series} with $n=v(a)\geq 0$ (i.e., with no negative powers of $\pi$. We refer the reader to~\cite{serre:local_fields79,cartwright_kw:random_walks_on_the_affine_group94} for details. This tree is sometimes called Serre's tree and is a particular case of a Bruhat-Tits building~\cite{serre:trees}. The affine group $\mathrm{Aff}(F)$, which is a subgroup of $\mathrm{GL}(2,F)$, acts on $\widetilde T_{d+1}$ and is isomorphic to a closed subgroup of $\Aut(\widetilde T_{d+1})$.

\subsection{Actions on rooted and unrooted regular trees}

For the sake of simplicity of notation we will explicitly describe the correspondence~\eqref{eq:series} on the example of the field $\Q_p$. However, the same construction with minimal changes works in the general case. The ring of $p$-adic integers $\Z_p$ is the ring of integers of $\Q_p$ with respect to the $p$-adic metric $d_p$ induced by $p$-adic valuation $v_p$. The maximal ideal $\mathfrak m=p\Z_p$ in $\Z_p$ is generated by a uniformizer $p=.010^\infty$ and $X=\{0,1,\ldots,p-1\}$ is the set of representatives of $\mathfrak m$ in $\Z_p$. Then every element $a$ of $\Q_p^*$ can be uniquely written as
\begin{equation*}
%\label{eq:series2}
a=\sum_{i=n}^\infty x_ip^i, x_i\in X, x_n\neq 0,
\end{equation*}
where $n=v_p(a)\in\Z$. This decomposition corresponds to the standard representation of elements of $\Q_p$ as biinfinite sequences over $X$ that has finite number of non-zero symbols at positions with negative indexes.

\begin{definition}
\label{def:dilation}
For a metric space $(\mathcal Y,d_{\mathcal{Y}})$ a homeomorphism $f\colon\mathcal Y\to\mathcal Y$ called a \emph{dilation} if there is $\lambda>0$ such that for all $x,y\in\mathcal Y$
\[d_{\mathcal Y}\bigl(f(x),f(y)\bigr)=\lambda d_{\mathcal Y}(x,y).\]
\end{definition}

For an alphabet $X=\{0,1,\ldots,p-1\}$, the set $X^n$ of vertices of level $n$ of $T(X)$ can be naturally identified with the ring $\Zp$ via
\[X^n\ni x_0x_1 \ldots x_{n-1} \longleftrightarrow x_0+p\cdot x_1+ \cdots +p^{n-1}\cdot x_{n-1} \in \Zp.\]
The boundary $\partial T(X)$ of $T(X)$ is then identified with the ring $\Z_p$ of $p$-adic integers:
\[\partial T(X)\ni x_0x_1x_2\ldots \longleftrightarrow \sum_{i=0}^\infty x_ip^i\in\Z_p.\]

Now for arbitrary $d\geq 2$ (not necessarily prime) we build an infinite unrooted $(d+1)$-regular tree $\widetilde T_{d+1}$ with a selected end (point of the boundary), whose punctured boundary (to be defined below) for the case of prime $d=p$, will be identified with $\Q_p$. In our notation the ``tilde'' is used to separate the rooted case from the unrooted one.

As a graph the tree $\widetilde T_{d+1}$ is an infinite regular (non-rooted) $(d+1)$-ary tree. We build it as a direct limit of a sequence of graphs $T_d^{(n)}$, $n\geq 0$ each of which is isomorphic to the rooted $d$-ary tree $T_d=T(X)$, for $X=\{0,1,\ldots,d-1\}$. Namely, we define
\[\widetilde T_{d+1}=\bigcup_{n=0}^{\infty}T_d^{(n)},\]
where embedding of $T_d^{(n)}$ into $T_d^{(n+1)}$ is induced by the map
\begin{equation}
\label{eqn:tree_embedding_Qp}
v\mapsto 0v,\quad v\in V(T_d)=X^*.
\end{equation}
and induced map on the set of edges. The reader should visualize $\widetilde T_{d+1}$ as a tree shown in Figure~\ref{fig:biinfinite_tree} (with some of the notation used in the figure explained later in the text).

If a vertex $v$ of $\widetilde T_{d+1}$ is at level $k$ of $T_d^{(n)}$ for some $k,n\geq 0$, then we say that it belongs to level $k-n$ of $\widetilde T_{d+1}$, where $k-n$ ranges over $\Z$. The children of a vertex $v$ of $\widetilde T_{d+1}$ of level $l$ are defined to be the neighbors of $v$ from level $l+1$. Each vertex in $\widetilde T_{d+1}$ has exactly $d$ children.

\begin{figure}[h]
\begin{center}
\epsfig{file=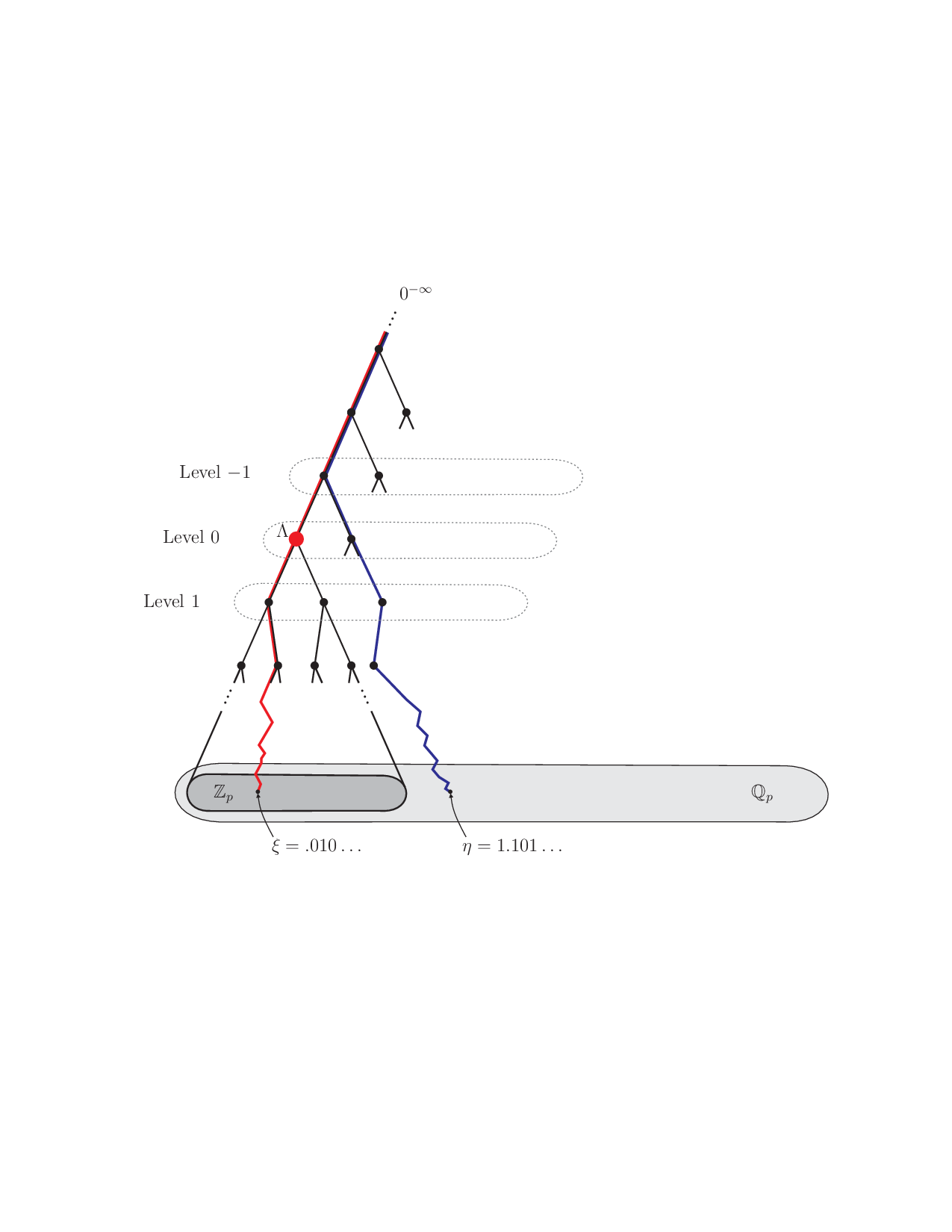}
\caption{$\Z_p$ and $\Q_p$ as boundaries of trees $T_p$ and $\widetilde T_{p+1}$\label{fig:biinfinite_tree}}
\end{center}
\end{figure}

In order to define the punctured boundary of $\widetilde T_{d+1}$ we recall that an \emph{end} of an infinite unrooted tree is an equivalence class of infinite geodesic rays (one-sided geodesics) in $\widetilde T_{d+1}$, where two rays are called equivalent if they coincide starting from some point. The collection of all ends of $\widetilde T_{d+1}$ forms the boundary $\partial \widetilde T_{d+1}$. Among all ends of the tree we select one end, denoted by $0^{-\infty}$ (or $\omega$ in most of the cases), that contains a ray consisting of the roots of rooted trees $T_d^{(n)}$, $n\geq 0$. This is the end corresponding to an infinite path going up in Figure~\ref{fig:biinfinite_tree}. All other ends of $\widetilde T_{d+1}$ correspond to rays going down as shown again in Figure~\ref{fig:biinfinite_tree}.

\begin{definition}
The punctured boundary $\partial_\omega\widetilde T_{d+1}$ of the tree $\widetilde T_{d+1}$ is the subset $\partial\widetilde T_{d+1}\setminus\{\omega\}$ obtained from $\partial \widetilde T_{d+1}$ by deleting a singleton $\omega$.
\end{definition}

Any two ends of $\widetilde T_{d+1}$ can be joined by the unique bi-infinite geodesic and conversely, any bi-infinite geodesic joins two ends of $\widetilde T_{d+1}$. Thus, the punctured boundary $\partial_\omega\widetilde T_{d+1}$ can be identified with the set of all bi-infinite geodesics in $\widetilde T_{d+1}$ initiating at $\omega$ (by analogy with the boundary of a rooted tree that consists of geodesic rays emanating from the root of the tree). Under this identification an end in $\partial_\omega\widetilde T_{d+1}$ corresponds to a bi-infinite geodesic connecting $\omega$ to it. Two points of the punctured boundary, denoted by $\xi$ and $\eta$ are shown in Figure~\ref{fig:biinfinite_tree}.

Define a metric on $\partial_{\omega}\widetilde T_{d+1}$ in a similar way as the metric was defined on the boundary of a rooted $d$-ary tree $T_d$. Namely, for any two elements $\xi_1,\xi_2$ of $\partial_{\omega}\widetilde T_{d+1}$ (viewed as bi-infinite geodesics) the distance $d_{\partial_{\omega}\widetilde T_{d+1}}(\xi_1, \xi_2)$ between them is defined as
\begin{equation}
\label{eqn:bndry_distance}
d_{\partial_{\omega}\widetilde T_{d+1}}(\xi_1, \xi_2)=d^{-l+1},
\end{equation}
where $l$ is the smallest level where $\xi_1$ differs from $\xi_2$. Under this metric $\partial_{\omega}\widetilde T_{d+1}$ becomes an ultrametric space.

For a prime $p$, we now identify $\partial_{\omega}\widetilde T_{p+1}$ with the field $\Q_p$ of $p$-adic numbers by producing an appropriate labelling of its elements. Note that a similar identification can be made for arbitrary $d\geq 2$ between $\partial_{\omega}\widetilde T_{d+1}$ with the ring $\Q_d$ of $d$-adic numbers, but we will not consider this case separately as all the technical details are identical.

First, there is a unique bi-infinite geodesic from $\partial_{\omega}\widetilde T_{d+1}$ consisting of the roots of the trees $T_d^{(n)}$, $n\geq 0$ and vertices $0^m$, $m>0$ in $T_d^{(0)}$, called the \emph{spine}. It is shown in Figure~\ref{fig:biinfinite_tree} as the path connecting $0^{-\infty}$ to $0^{\infty}$. This element will represent zero in $\Q_p$.

By definition of $\partial_{\omega}\widetilde T_{p+1}$ for each $\xi\in \partial_{\omega}\widetilde T_{p+1}$ there is $N\in\Z$ (not unique) such that $\xi$ follows the spine outside $T_p^{(N)}$. In other words, $\xi$ branches out of the spine inside $T_p^{(N)}$. We will then label each $\xi$ as a bi-infinite word $\ldots x_{-2}x_{-1}x_{0}.x_{1}x_{2}x_{3}\ldots$ over $X$, where $x_i=0$ for $i\leq -N$ and $x_i$ is the $(i+N)$-th letter of the label of the restriction of $\xi$ to $T_d^{(N)}$ for $i>-N$, and the ``dot'' symbol is placed between the letters at positions 0 and 1. In other words, $x_i$ indicates what happens with $\xi$ right below level $i$. For example, the labels of $\xi$ and $\eta$ in Figure~\ref{fig:biinfinite_tree} are $\ldots000.010\ldots$ and $\ldots001.101\ldots$ that we will usually abbreviate as $.010\ldots$ and $1.101\ldots$, respectively, by omitting the beginning 0's before the first nonzero digit or the dot symbol. The spine of $\widetilde T_{d+1}$ is then labelled by $\ldots000.000\ldots=0^{-\infty}.0^{\infty}$ or just by $.0^{\infty}$.

Under the above labelling, the distance formula~\eqref{eqn:bndry_distance} can be rewritten as
\begin{equation}
d_{\partial_{\omega}\widetilde T_{d+1}}(\ldots x_{-2}x_{-1}x_{0}.x_{1}x_{2}x_{3}\ldots,\ldots y_{-2}y_{-1}y_{0}.y_{1}y_{2}y_{3}\ldots)=d^{-\min\{i\in\Z\colon x_i\neq y_i\}+1}.
\end{equation}

Now define a natural bijection $\varphi\colon \partial_{\omega}\widetilde T_{p+1}\to\Q_p$ by
\[\varphi(\ldots x_{-3}x_{-2}x_{-1}x_{0}.x_{1}x_{2}x_{3}\ldots)=\sum_{i=-\infty}^{\infty} x_ip^i,\]
which is an isometry by the definition of a $p$-adic metric on $\Q_p$.

The subset
\[A=\{\ldots x_{-2}x_{-1}x_{0}.x_{1}x_{2}x_{3}\ldots\in \partial_{\omega}\widetilde T_{p+1}\colon x_i=0\ \text{for}\ i\leq0\}\]
of $\partial_{\omega}\widetilde T_{p+1}$ is naturally isometric under $\varphi$ to the ring $\Z_p$ of $p$-adic integers, and also to the boundary $\partial T_p^{(0)}=\partial T_p$ of the rooted $p$-ary tree supplied with the standard metric.

The identification $\varphi$ also gives rise to a natural labelling by (equivalence classes of) pairs $(\xi,n)$, $\xi\in\partial_{\omega}\widetilde T_{d+1}$ and $n\in\Z$, of the vertices of $\widetilde T_{d+1}$. Indeed, every vertex of level $n\in\Z$ can be identified (non-uniquely) with a pair $(\xi, n)$, where $\xi$ is a bi-infinite word over $X$ with the dot symbol representing a bi-infinite geodesic connecting $\omega$ to another end and passing through $v$. Clearly, if $\xi'$ coincides with $\xi$ at all positions $m\leq n$, then $(\xi,n)$ and $(\xi', n)$ correspond to the same vertex of $\widetilde T_{d+1}$. This defines an equivalence relation on the set of all such pairs $\{(\xi,n)\colon \xi\in\partial_{\omega}\widetilde T_{d+1}, n\in\Z\}$. Then the equivalence class of a pair $(\xi,n)$, denoted by $[\xi]_n$, uniquely identifies a vertex of $\widetilde T_{d+1}$. We denote the vertices $[0^{-\infty}.0^{\infty}]_n$ of the spine by $0^{n}$, where $n\in\Z$. Finally, we denote the root $0^0$ of $T^{(0)}_d$ by $\rootv$ and color it in the red color in Figures~\ref{fig:biinfinite_tree} and~\ref{fig:embedding_theta}.

The group $\Aut(\widetilde T_{p+1})$ of all automorphisms of $\widetilde T_{p+1}$ is a locally compact group, whose elements are either \textit{elliptic} (fixing a vertex of $T_{p+1}$), \textit{inversions} (fixing an edge but not a vertex of $T_{d+1}$) or \textit{hyperbolic} (the rest). For a fixed end $\omega\in\partial_{\omega}\widetilde T_{p+1}$ let $\Aut(\widetilde T_{p+1})_{\omega}$ be a subgroup of $\Aut(\widetilde T_{p+1})$ consisting of automorphisms that preserve an end $\omega$ of $\widetilde T_{p+1}$ (i.e., such automorphism $g\in\Aut(\widetilde T_{p+1})$ for which there is $N,k\in\Z$ such that $g(0^{-n})=0^{-n+k}$ for all $n\geq N$). Let also $\Aut_0(\widetilde T_{p+1})_{\omega}$ be a subgroup of $\Aut(\widetilde T_{p+1})_{\omega}$ consisting of automorphisms that eventually preserve the vertices of an end $\omega$ of $\widetilde T_{p+1}$ (i.e., automorphisms $g\in\Aut(\widetilde T_{p+1})$ for which there is $N\in\Z$ such that $g(0^{-n})=0^{-n}$ for all $n\geq N$).

The groups $\Aut(\widetilde T_{p+1})_{\omega}$ and $\Aut_0(\widetilde T_{p+1})_{\omega}$ were studied before in~\cite{cartwright_kw:random_walks_on_the_affine_group94} under the names of \emph{affine} and \emph{horocyclic} groups of the tree $\widetilde T_{d+1}$, respectively. The levels of $\widetilde T_{p+1}$ are often called horocycles. In~\cite{bartholdi_nw:horocyclic_product_of_trees08} the affine group of $\widetilde T_{d+1}$ is defined as the group consisting of Busemann isometries. It is well known that $\Aut(\widetilde T_{p+1})_{\omega}$ is amenable as a topological group (we again quote Nebbia~\cite{nebbia:amenability_and_kunze-stein_property88} or~\cite[Lemma~12.14]{woess:rw}). Also, we note the group of automorphisms of a tree $D_{\overline{d}}$, which is the induced subgraph in $\widetilde T_{d+1}$ spanned by all the vertices of non-positive levels, was studied in~\cite{bier_ls:automorphisms_of_parabolic_trees16} in the context of Sylow $p$-subgroups in the finitary symmetric group $FS_{\mathbb N}$.

The subgroup $\Aut(\widetilde T_{p+1})_{\omega}$ of $\Aut(\widetilde T_{p+1})$ consists only of elliptic and hyperbolic elements. For an end $\omega=0^{-\infty}$, one particularly important for us element of $\Aut(\widetilde T_{p+1})_{\omega}$ is a shift $\tau\in\Aut(\widetilde T_{p+1})_{\omega}$ that acts on each $v\in\widetilde T_{d+1}$ by moving it in the direction of $O^{-\infty}$ as shown in Figure~\ref{fig:shift}. In terms of the action on the boundary $\partial_{\omega}\widetilde T_{d+1}$, it moves the dot in the expression of each element $\xi\in\partial_{\omega}\widetilde T_{d+1}$, viewed as a bi-infinite string over $X$ that have $0^{-\infty}$ as a prefix, by one position to the right, that is corresponds to the left shift map $s$ on $X^\Z$:
\[\theta(t)(\ldots x_{-2}x_{-1}x_{0}.x_{1}x_{2}x_{3}\ldots)=\ldots x_{-1}x_{0}x_{1}.x_{2}x_{3}x_{4}\ldots\]

The following theorem stated in the introduction connects the groups of automorphisms of trees with the groups of isometries and dilations of local fields and their rings of integers.

\begin{theoremisoms}
Let $F$ be a (non-Archimedean) local field with the ring of integers $\mathcal O$ and the residue field of size $q$, endowed with the metric induced by its valuation. Then\\[-5mm]
\begin{itemize}
\item[(i)] the group $\Isom(\mathcal O)$ of isometries of $\mathcal O$ is isomorphic to the group $\Aut(T_q)$ of automorphisms of $T_q$.
\item[(ii)] the group $\Isom(F)$ of isometries of $F$ is isomorphic to the group $\Aut_0(\widetilde T_{q+1})_{\omega}$ consisting of automorphisms of $\widetilde T_{q+1}$ that fix a distinguished end $\omega$ pointwise (fixing the vertices in $\omega$).
\item[(iii)] the group $\D(F)$ of dilations of $F$ is isomorphic to the group $\Aut(\widetilde T_{q+1})_{\omega}$ consisting of automorphisms of $\widetilde T_{q+1}$ that fix an end $\omega$ as a point of the boundary of $\widetilde T_{q+1}$ (but possibly moving the vertices in rays corresponding to $\omega$ along those rays while preserving $\omega$ as a point of the boundary).
\end{itemize}
\end{theoremisoms}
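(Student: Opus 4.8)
The plan is to treat all three parts uniformly through the ``tree of balls'' picture together with the identifications recalled earlier in this section, namely $\mathcal O\cong\partial T_q$ and $F\cong\partial_\omega\widetilde T_{q+1}$ as ultrametric spaces, under which the vertices at level $n$ of $T_q$ (resp.\ of $\widetilde T_{q+1}$) correspond precisely to the closed balls of radius $q^{-n}$ in $\mathcal O$ (resp.\ in $F$), with the child/parent relation corresponding to reverse inclusion of balls. First I would record two elementary facts. A distance-preserving self-map of $\mathcal O$ or of $F$ is automatically surjective: for $\mathcal O$ this is compactness, and for $F$ one notes that the images $f(B_n)$ of the balls $B_n=0^{-n}$ around $0$ of radius $q^{n}$ form a nested chain of balls of radii $q^n\to\infty$, which nesting forces to be eventually equal to $\{B_n\}$, hence to exhaust $F$; so isometries are genuine homeomorphisms. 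Moreover the multiplicative constant $\lambda$ of a dilation of $\mathcal O$ or $F$ lies in the value set of the metric, i.e.\ $\lambda=q^k$ for some $k\in\Z$, and $k=0$ is forced in the compact case $\mathcal O$ since $\mathrm{diam}\,\mathcal O=1$ is preserved.

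Next I would build the correspondence underlying (iii). Given a dilation $f$ of $F$ with $\lambda=q^k$, surjectivity gives $f(\bar B(x,r))=\bar B(f(x),q^k r)$ for every ball, so $f$ carries the level-$n$ vertices of $\widetilde T_{q+1}$ bijectively onto the level-$(n-k)$ vertices and preserves inclusion; hence $f$ induces an automorphism $\Phi(f)$ of $\widetilde T_{q+1}$ that shifts every level by the constant $-k$. An automorphism shifting levels by a constant fixes the end $\omega$ (which is the unique end along which the level function tends to $-\infty$), so $\Phi(f)\in\Aut(\widetilde T_{q+1})_\omega$, and $\Phi$ is a group homomorphism since ratios multiply and level shifts add. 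Conversely, if $g\in\Aut(\widetilde T_{q+1})_\omega$ then $g$ maps the upward ray $0^0,0^{-1},0^{-2},\dots$ to another ray converging to $\omega$; two rays to the same end eventually coincide, so $g(0^{-n})=0^{-n+c}$ for all large $n$ and some $c\in\Z$, and since $g$ is an isometry for the graph metric this forces $g$ to shift \emph{every} level by $c$. Tracking where two boundary points first diverge then shows that $g$ acts on $\partial_\omega\widetilde T_{q+1}\cong F$ as a dilation with $\lambda=q^{-c}$, and this action inverts $\Phi$; thus $\Phi$ is an isomorphism $\D(F)\to\Aut(\widetilde T_{q+1})_\omega$, proving (iii).

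For (ii), $\Isom(F)$ is exactly the kernel of the ``ratio'' homomorphism $\D(F)\to q^{\Z}$, i.e.\ the dilations with $\lambda=1$, which under $\Phi$ correspond to the automorphisms shifting levels by $0$, i.e.\ those preserving every level. It then remains to identify these with $\Aut_0(\widetilde T_{q+1})_\omega$: a level-preserving automorphism fixes $\omega$ and, by the ray-merging argument above with $c=0$, eventually fixes the vertices $0^{-n}$, hence lies in $\Aut_0(\widetilde T_{q+1})_\omega$; conversely any element of $\Aut_0(\widetilde T_{q+1})_\omega$ fixes $0^{-n}$ at level $-n$ for all large $n$, so it shifts levels by $0$ and preserves every level. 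Part (i) is the analogous but simpler statement over the compact space $\mathcal O$: every isometry has $\lambda=1$, maps the finitely many balls of each radius $q^{-n}$ bijectively onto themselves preserving inclusion, necessarily fixes the unique radius-$1$ ball $\mathcal O$ (the root), and so induces an automorphism of the rooted tree $T_q$; this assignment is a group homomorphism, and it is bijective because, conversely, every automorphism of $T_q$ acts on $\partial T_q\cong\mathcal O$ as an isometry.

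I do not expect a genuinely hard step, as the statement is folklore; the point requiring care is the bookkeeping in (ii) and (iii), where one must check that a distance-preserving or dilating boundary map really extends to a tree automorphism with the stated \emph{uniform} level shift, rather than merely permuting balls level by level with a ``wandering spine''. The clean way to see this is exactly the observation that rays to a common end eventually coincide, which simultaneously rigidifies the spine and pins down the constant. A secondary point is matching signs: the level-shift constant $c$ must be paired with the exponent of $\lambda=q^k$ so that, for instance, the shift $\tau$ of the text (level shift $-1$, acting on the boundary as multiplication by the inverse uniformizer) corresponds to the dilation with $\lambda=q$.
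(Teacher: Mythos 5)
Your argument is correct and follows essentially the same route as the paper: both identify $\mathcal O$ and $F$ with the boundaries $\partial T_q$ and $\partial_\omega\widetilde T_{q+1}$ as ultrametric spaces and then extend an isometry or dilation of the boundary to the vertices (your closed balls of radius $q^{-n}$ are exactly the paper's vertex classes $[\xi]_n$), with the dilation factor $q^{m}$ turning into a uniform level shift. The differences are only in the level of detail: you spell out surjectivity, the uniformity of the level shift and the sign bookkeeping, and you argue part (i) directly where the paper cites Proposition~3.7 of~\cite{gns00:automata}.
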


%\begin{theoremisoms}
%~\\[-5mm]
%\begin{itemize}
%\item[(i)] The group $\Isom(\Z_p)$ of isometries of $\Z_p$ is isomorphic to the group $\Aut(T_p)$ of automorphisms of $T_p$.
%\item[(ii)] The group $\Isom(\Q_p)$ of isometries of $\Q_p$ is isomorphic to the group $\Aut_0(\widetilde T_{p+1})_{\omega}$.
%\item[(iii)] The group $\D(\Q_p)$ of dilations of $\Q_p$ is isomorphic to the group $\Aut(\widetilde T_{p+1})_{\omega}$.
%\end{itemize}
%\end{theoremisoms}

\begin{proof}
Since $\mathcal O$ and $F$ are isometric as metric spaces to the boundaries $\partial T_q$ and $\partial_{\omega}\widetilde T_{q+1}$ of trees $T_q$ and $\widetilde T_{q+1}$, their groups of isometries are isomorphic to the groups of isometries of these boundaries. Therefore, we need to show that these groups of isometries are isomorphic to the respective automorphism groups.

For $T_q$ this is shown in Proposition~3.7 in~\cite{gns00:automata}. For the case of $\widetilde T_{q+1}$ the proof is similar. Clearly, every automorphism of $\widetilde T_{q+1}$ fixing $0^{-\infty}$ induces an isometry of $\partial_{\omega}\widetilde T_{q+1}$ as the action on the set of vertices of $\widetilde T_{q+1}$ induces the action on the boundary. Conversely, suppose $h\in\Isom(\partial_{\omega}\widetilde T_{q+1})$ is an isometry. We will extend the action of $h$ to $V(\widetilde T_{q+1})$ by defining
\[h([\xi]_n)=[h(\xi)]_n\]
for each vertex $[\xi]_n$ of $\widetilde T_{q+1}$.
Since $h$ is an isometry on $\partial_{\omega}\widetilde T_{q+1}$, this induced action on $V(\widetilde T_{q+1})$ is an automorphism preserving the end $0^{-\infty}$ pointwise.

Finally, for the third item the proof is analogous with the only difference that the dilations of $\partial_{\omega}\widetilde T_{q+1}$ will move all vertices of $\widetilde T_{q+1}$ up or down depending on the dilation coefficient. Suppose that $h\in D(F)$ is a dilation. Since the distances between any two points in $F$ are powers of $q$, the dilation coefficient also must be of the form $q^m$ for some $m\in\Z$ (that is, $d(h(\xi),h(\xi'))=q^md(\xi,\xi')$ for all $\xi,\xi'$ in $F$). Then we extend the action of $h$ to $V(\widetilde T_{q+1})$ by defining
\[h([\xi]_n)=[h(\xi)]_{n+m}\]
for each vertex $[\xi]_n$ of $\widetilde T_{q+1}$.
\end{proof}

Similarly to sections of automorphisms of $T_d$, we define the sections of automorphisms from $\Aut_0(\widetilde T_{d+1})_{\omega}$ and $\Aut(\widetilde T_{d+1})_{\omega}$. However, with the lack of the root in $\widetilde T_{d+1}$ the notation become a little more involved. Suppose $g\in \Aut(\widetilde T_{d+1})_{\omega}$ and $v=[\xi]_n$ is a vertex of $\widetilde T_{d+1}$ that is moved by $g$ to a vertex $w=[\eta]_m$. Then the rooted $d$-ary tree $T_v$ hanging down at vertex $v$ in $\widetilde T_{d+1}$ is mapped bijectively to the tree $T_w$ hanging down at vertex $w$. Since both of this trees are canonically isomorphic to $T_d$, this induces the unique automorphism $g|_v$ of $T_d$ that we will call the \emph{section of $g$ at $v$}. Formally, we define it as follows. If $u\in X^*$ is a vertex of $T_d$, then $g|_v(u)=u'$ if and only if
\[g\left(\bigl[[\xi]_nu0^\infty\bigr]_{n+|u|}\right)=\bigl[[\eta]_mu'0^\infty\bigr]_{m+|u'|},\]
where $[\xi]_nu0^\infty$ denotes the bi-infinite word obtained from $\xi$ by replacing the tail of $\xi$ starting from position $n+1$ by $u0^\infty$, and similarly for $[\eta]_mu'0^\infty$.

Similarly to the definition of a projection homomorphism~\eqref{eqn:proj}, for any vertex $v\in V(\widetilde T_{d+1})$ we define a natural homomorphism
\begin{equation}
\label{eqn:proj2}
\begin{array}{llll}\pi_v\colon &\St_{\Aut(T(X))}(v)&\to&\Aut(T(X))\\
&g&\mapsto&g|_v
\end{array}
\end{equation}
Since for each subgroup $G$ of $\Aut(\widetilde T_{d+1})_{\omega}$ we have $\Stg(v)<\St_{\Aut(T(X))}(v)$, the homomorphism $\pi_v$ naturally restricts to a homomorphism $\Stg(v)\to\Aut(T(X))$.

Also, similarly to the case of rooted trees, we can visualize the action of an automorphism $g\in\Aut_0(\widetilde T_{d+1})_{\omega}$ using the notion of the \emph{portrait} of $g$, also denoted by $\mathcal P(g)$. It is a labeled infinite unrooted $(|X|+1)$-ary tree with the vertex $\rootv$ (the root of $T^{(0)}_d$) labeled by $g$ and each vertex $v$ labeled by $g|_v$. Under each vertex $v$, we write the name of the mapping that $g|_v$ defines on the first level of the subtree hanging from $v$. In the case $d=2$, we draw an arc (called switch) connecting the two edges hanging down from $v$ if $h|_v$ acts nontrivially on the first level of the subtree hanging from $v$. If there is no switch, it means the action on the first level is trivial. Examples of portraits corresponding to the generators of the finitely presented group $\fpG$ that will be discussed below are shown in Figures~\ref{fig:theta_bcd} and~\ref{fig:theta_a}.

%%%%%%%%%%%%%%%%%%%%%%%%%%%%
\subsection{Notational remarks}
\label{ssec:history_trees}

In~\cite{bartholdi_nw:horocyclic_product_of_trees08} the notation $\mathrm{Aff}(\widetilde T_{d+1})$ is suggested for $\Aut(\widetilde T_{d+1})_{\omega}$ but we think that the prefix ``$\mathrm{Aff}$'' may confuse the reader because of its connection to linearity. The group $\mathrm{Aff}(F)$ of all affine transformations of the field $F$, which is embedded in $\Aut(\widetilde T_{d+1})_{\omega}$ as a proper subgroup, is a linear group satisfying the Tits alternative, and hence the exotic groups like groups of Burnside type, of intermediate growth, or of branch type avoid it. At the same time the group $\Aut(\widetilde T_{d+1})_{\omega}$ is full of such exotic groups as explained in this article.

\section{Liftable groups and actions of their HNN extensions on regular trees}
\label{sec:embedding}

In this section we embed certain ascending HNN extensions of liftable self-similar groups into the group $\Aut(\widetilde T_{d+1})_\omega$.
% and, in case $d=p$ is prime, into the group of dilations of $\mathbb Q_p$ with some transitivity properties. In particular, we apply this to embed in a special way the finitely presented Grigorchuk group $\fpG$ from~\cite{grigorch:example} into $\D(\mathbb Q_2)$.

%Let $G$ be a self-similar group acting on $T_d$. The idea of such an embedding consists in realizing the stable letter of an HNN extension as a vertical shift $\tau$ along the tree $T_{d+1}$ belonging to $\Aut(\widetilde T_{d+1})_\omega$ which, via the last part of Theorem~\ref{thmx:isoms}, is isomorphic to $\D(\mathbb Q_p$) for prime $d=p$. Conjugation relations of HNN extension then yield the way how the generators of the original self-similar group have to be embedded. Namely, one has to be able to ``lift'' the elements of $G$ up along the tree in a way that this lift produces an endomorphism of $G$ and is compatible with the projection map $\pi_i$ defined in~\eqref{eqn:proj} for some $i\in X$. To state it more precisely, we introduce the class of \emph{liftable} groups.

\subsection{Definition and main embedding result}

\begin{definition}
\label{def:liftable}
A self-similar group $G$ acting on a tree $T_d=T(X)$ is called \emph{liftable} if there exists some $i\in X$ and a homomorphism $\sigma\colon G\to \Stg(i)$, called the \emph{lifting}, that is the right inverse of the projection map $\pi_i$ defined by~\eqref{eqn:proj} (i.e., such that $\pi_i\circ\sigma$ is the identity on $G$).
\end{definition}

Note that it follows immediately from the definition that the lifting $\sigma$ must be a monomorphism.

An equivalent definition of a liftability of a group is that for some $i\in X=\{0,1,\ldots,d-1\}$ the sequence
\[1\to \ker\pi_i\hookrightarrow\Stg(i)\stackrel{\pi_i}{\twoheadrightarrow} G\to 1\]
is a short exact sequence defining a split extension, and hence the stabilizer $\Stg(i)$ of vertex $i$ decomposes as a semidirect product:
\[\Stg(i)=\ker\pi_i\rtimes G.\]

It follows from the definition of a liftable group that the projection map $\pi_i\colon\Stg(i)\to G$ must be surjective. This is always the case when $G$ is self-replicating. However, the fact that $\pi_i$ is onto for some $i$ does not automatically imply the existence of a lifting. On the other hand we do not know of any example of a self-replicating non-liftable group.

\begin{question}
Is there a self-replicating non-liftable group?
\end{question}

%\section{Concluding remarks}
%\label{sec:concluding_remarks}

We proceed to the main theorem of this section stated in the introduction. Recall that $\rootv=0^0$ denotes the leftmost vertex of level 0 in $\widetilde T_{d+1}$ that is also the root of the tree $T_d^{(0)}$.

\begin{theoremhnn}
Let $G$ be a liftable group acting on $T_d$ and $\sigma\colon G\to\Stg(i)$ be the corresponding lifting. Then
\begin{itemize}
\item[(i)] There is an embedding $\theta$ of the ascending HNN extension $\widetilde G$ of $G$ by $\sigma$
\[\widetilde G=\langle G,t\mid \text{relations in}\ G, tgt^{-1}=\sigma(g)\ \text{for all}\ g\in G\rangle\]
into $\Aut(\widetilde T_{d+1})_{\omega}$ for an end $\omega$ of $\widetilde T_{d+1}$.
\item[(ii)] If $G$ acts transitively on the first level of $T_d$, then $G$ is self-replicating, $\theta(\widetilde G)$ acts transitively on the set of vertices of $\widetilde T_{d+1}$, and the closure $W$ of $\theta(\widetilde G)$ in $\Aut(\widetilde T_{d+1})_{\omega}$ is a scale group.
\item[(iii)] The projection $\pi_{\rootv}$ maps $\St_{\theta(\widetilde G)}(\rootv)$ onto $G$ and $\St_{\theta(\widetilde G)}(\rootv)\cong\bigl(\ker\pi_{\rootv}\cap\theta(\widetilde G)\bigr)\rtimes \theta(G)$.
\item[(iv)] The projection $\pi_{\rootv}$ maps $\St_{W}(\rootv)$ onto $\overline{G}$ and $\St_{W}(\rootv)\cong\bigl(\ker\pi_{\rootv}\cap W\bigr)\rtimes \theta(\overline{G})$.
\end{itemize}
%Let $G$ be a liftable group acting on $T_d$ and $\sigma\colon G\to\Stg(i)$ be the corresponding lifting. Then the ascending HNN extension $\widetilde G$ of $G$ by $\sigma$
%\[\widetilde G=\langle G,t\mid \text{relations in}\ G, tgt^{-1}=\sigma(g)\ \text{for all}\ g\in G\rangle\]
%embeds into $\Aut(\widetilde T_{d+1})_{\omega}$ for some end $\omega$ of $\widetilde T_{d+1}$ (and so in the case when $d=p$ is prime, in the group $\D(\mathbb Q_p)$ of dilations of $\Q_p$). Moreover, if $G$ acts transitively on the first level of $T_d$, then $\widetilde G$ acts transitively on the set of vertices of $\widetilde T_{d+1}$ and the closure of $\widetilde G$ in $\Aut(\widetilde T_{d+1})_{\omega}$ is a scale group.
\end{theoremhnn}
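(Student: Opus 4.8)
The plan is to construct the embedding $\theta$ explicitly on generators and verify it respects the HNN relations, then upgrade to the transitivity and semidirect product statements. First I would define $\theta$ on $G$ by letting each $g\in G$ act on $\widetilde T_{d+1}$ only on the rooted subtree $T_d^{(0)}$ hanging below $\rootv$ via its given action on $T_d=T(X)$, and fixing everything at levels $\le -1$ as well as all the sibling subtrees. This is a homomorphism $\theta|_G\colon G\to\Aut_0(\widetilde T_{d+1})_\omega$ because $G$ fixes the root of $T_d$; it is injective since $G$ acts faithfully on $T_d$. Next I would define $\theta(t)$ to be the shift $\tau$ introduced before the theorem --- but a ``twisted'' shift: it moves the subtree $T_d^{(0)}$ down onto the subtree hanging below the vertex $i\in X$ of the first level of $T_d^{(0)}$, in the way dictated by $\sigma$. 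Concretely, $\theta(t)$ should carry a vertex $[\xi]_n$ along the geodesic toward the end $i^\infty$-branch... and the key compatibility to check is the HNN relation $\theta(t)\theta(g)\theta(t)^{-1}=\theta(\sigma(g))$ for all $g\in G$. This reduces, after unwinding the section bookkeeping in~\eqref{eqn:section_words}, to the identity $\pi_i(\sigma(g))=g$, which is exactly the defining property of the lifting. The faithfulness of $\theta$ on all of $\widetilde G$ then follows from Britton's lemma together with the fact that the positive powers of $\theta(t)$ push everything deeper into $T_d^{(0)}$ and no nontrivial element of any $\Stg(\cdot)$-conjugate can be killed.

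For part (ii): if $G$ is transitive on $X$, then in particular $\pi_i$ is onto (one checks self-replicating by combining transitivity with the lifting giving a section-preserving right inverse), and I would show $\theta(\widetilde G)$ is vertex-transitive on $\widetilde T_{d+1}$ as follows. The element $\theta(t)$ maps $\rootv$ to the vertex $0^1$ (one level up), so positive and negative powers of $t$ reach every vertex of the spine; then, using transitivity of $G$ on each level of $T_d^{(0)}$ (which holds because self-replicating groups are level-transitive, as noted in the preliminaries) together with conjugates $\theta(t)^{-n}\theta(G)\theta(t)^n$ to reach into $T_d^{(n)}$, I get every vertex of $\widetilde T_{d+1}$ in the orbit of $\rootv$. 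Since $\theta(\widetilde G)$ fixes $\omega$ and acts vertex-transitively, its closure $W\le\Aut(\widetilde T_{d+1})_\omega$ is a closed vertex-transitive subgroup fixing an end, which is exactly the definition of a scale group (recalling $\Aut(\widetilde T_{d+1})_\omega$ is amenable, so Nebbia's characterization applies). One small point to verify is that the closure still fixes $\omega$ --- this is automatic since $\Aut(\widetilde T_{d+1})_\omega$ is closed in $\Aut(\widetilde T_{d+1})$.

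For parts (iii) and (iv): I would analyze $\St_{\theta(\widetilde G)}(\rootv)$ using Britton normal forms. An element of $\widetilde G$ in reduced form $t^{-m}g_1t^{\epsilon_1}\cdots g_k t^{\epsilon_k}g_{k+1}t^{n}$ fixes $\rootv$ iff its net ``$t$-exponent'' is zero (since $\theta(t)$ shifts levels), so $\St_{\theta(\widetilde G)}(\rootv)$ consists of elements of level-displacement $0$; among these, $\theta(G)$ itself fixes $\rootv$ and projects isomorphically under $\pi_{\rootv}$ onto $G$ (giving the onto claim and the splitting $\theta(G)$). The kernel of $\pi_{\rootv}$ restricted to this stabilizer consists of elements acting trivially on $T_d^{(0)}$ below $\rootv$ but possibly nontrivially on the sibling subtrees at level $0$ and above; these form the normal complement $\ker\pi_{\rootv}\cap\theta(\widetilde G)$, and since $\theta(G)\cap\ker\pi_{\rootv}=\{1\}$ (faithfulness of the $G$-action on $T_d$) and $\theta(G)$ normalizes the kernel, we get $\St_{\theta(\widetilde G)}(\rootv)=(\ker\pi_{\rootv}\cap\theta(\widetilde G))\rtimes\theta(G)$. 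Part (iv) follows by taking closures: $\pi_{\rootv}$ is continuous, $\St_W(\rootv)$ is the closure of $\St_{\theta(\widetilde G)}(\rootv)$ (using that stabilizers of vertices are open), and the semidirect decomposition passes to closures because $\theta(\overline G)$ is a closed complement to the closed normal subgroup $\ker\pi_{\rootv}\cap W$.

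\textbf{Main obstacle.} I expect the crux to be pinning down the precise definition of $\theta(t)$ as an honest automorphism of $\widetilde T_{d+1}$ fixing $\omega$ --- in particular getting the section bookkeeping right so that $\theta(t)\theta(g)\theta(t)^{-1}$ acts on $T_d^{(0)}$ exactly as $\sigma(g)$ does, rather than as some conjugate of it. Once the generators and the single relation type are correctly set up, faithfulness via Britton's lemma and the structural statements in (iii)--(iv) are comparatively routine, though part (iv) requires a little care with the interplay between topological closure and the algebraic semidirect product.
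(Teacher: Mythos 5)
The construction of $\theta$ on $G$ is where your proposal breaks down, and it is not a fixable piece of bookkeeping. You let $\theta(g)$ act as $g$ on the subtree $T_d^{(0)}$ below $\rootv$ and trivially everywhere else, hoping to absorb $\sigma$ into a ``twisted shift'' $\theta(t)$. But conjugating an element supported on $T_d^{(0)}$ by any automorphism $\tau$ of $\widetilde T_{d+1}$ yields an element supported on the image of $T_d^{(0)}$ under $\tau^{\pm1}$, and since your $\theta(t)$ moves $\rootv$ (it carries $T_d^{(0)}$ onto the subtree below the first-level vertex $i$), the conjugate $\theta(t)\theta(g)\theta(t)^{-1}$ is supported either on the subtree below a single first-level vertex of $T_d^{(0)}$ or on the larger tree $T_d^{(1)}$. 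In the first case the relation $\theta(t)\theta(g)\theta(t)^{-1}=\theta(\sigma(g))$ would force $\sigma(g)$ to act trivially outside the subtree below $i$; in the second it would force a conjugate copy of $g$ to fix the first level and be supported below one first-level vertex. Both fail in general: a lifting only guarantees $\sigma(g)\in\Stg(i)$ and $\pi_i(\sigma(g))=g$, while the other sections $\sigma(g)|_j$, $j\neq i$, are typically nontrivial (for $\G$ one has $\sigma(a)=aca=(d,a)$). So your claimed reduction of the HNN relation to the identity $\pi_i(\sigma(g))=g$ checks agreement only inside the $i$-subtree and ignores what $\sigma(g)$ does on the rest of $T_d^{(0)}$; no choice of twisting of $\theta(t)$ can repair this while $\theta(g)$ is trivially extended off $T_d^{(0)}$.

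The paper resolves exactly this by putting the twist into $\theta|_G$ rather than into $\theta(t)$: after relabelling so that $i=0$, $\theta(g)$ is defined to have section $\sigma^n(g)$ at the spine vertex $0^{-n}$ for every $n\geq 0$ (consistent precisely because $\pi_0\circ\sigma=\mathrm{id}$), and $\theta(t)$ is the plain shift $\tau$ with trivial sections; conjugation by $\tau$ then shifts the whole tower of sections by one step, which is literally $\theta(\sigma(g))$. This matters downstream as well: the paper's proof that $\pi_{\rootv}$ maps $\St_{\theta(\widetilde G)}(\rootv)$ onto (and into) $G$ uses that all sections of $\theta(t)$ are trivial, so all sections of elements of $\theta(\widetilde G)$ lie in the self-similar group $G$ --- a point your sketch of (iii) does not address. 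A further inaccuracy in (iii): an element of $\theta(\widetilde G)$ with net $t$-exponent zero need not fix $\rootv$ (e.g.\ $\theta(t)^{-1}\theta(g)\theta(t)$ has zero displacement but moves $\rootv$ whenever $g$ moves the vertex $i$), so your description of the stabilizer is only a necessary condition. Your outlines of (ii) and (iv) follow the same lines as the paper, but they stand only once the definition of $\theta$ is corrected.
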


\begin{proof}
By relabelling of $X$ we may assume that $X=\{0,1,\ldots,d-1\}$ for $d=|X|$ and that $i=0$. We will then set $\omega=0^{-\infty}$ and build an embedding $\theta\colon \widetilde G\to\Aut(\widetilde T_{d+1})_{\omega}$ as follows. Given $g\in G$ we define $\theta(g)\in \Aut_0(\widetilde T_{d+1})_{\omega}$ so that its section at vertex $0^{-n}$ is $\sigma^n(g)$ for $n\geq 0$, as shown in Figure~\ref{fig:embedding_theta}. This is a well-defined embedding of $G$ into $\Aut_0(\widetilde T_{d+1})_{\omega}$ (which we will denote by $\theta|_G$ for now and extend to $\theta\colon \widetilde G\to \Aut(\widetilde T_{d+1})_{\omega}$ below) as follows from the fact that $\sigma$ is a monomorphism with the image inside $\Stg(0)$ and such that $\pi_0\circ\sigma$ is the identity endomorphism on $G$.

\begin{figure}[h]
\begin{center}
\epsfig{file=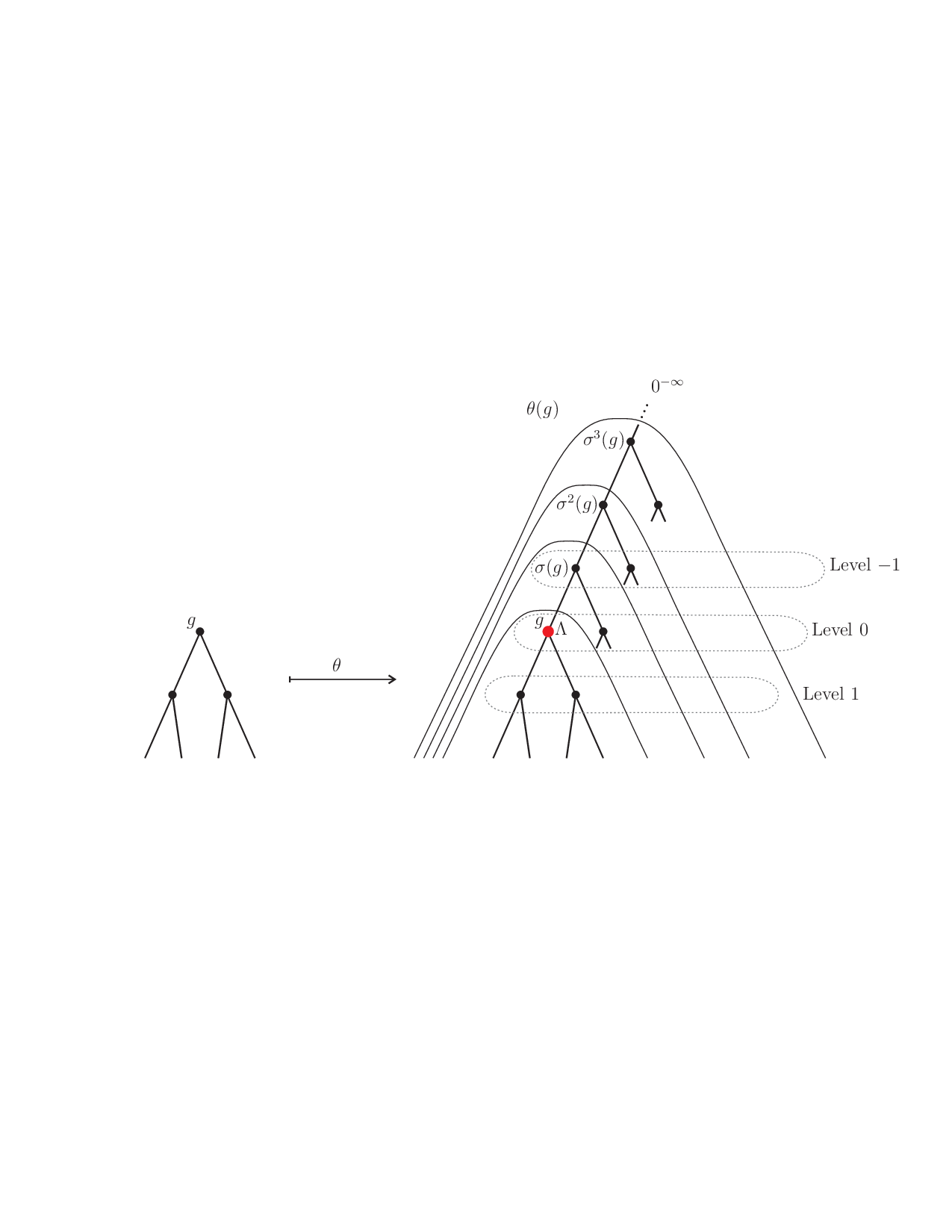,width=\textwidth}
\caption{Embedding of a self-similar group into $\Aut_0(\widetilde T_{d+1})_{\omega}$\label{fig:embedding_theta}}
\end{center}
\end{figure}

Alternatively, it can be explained in the following way. The map $\theta|_G\colon G\to\Aut_0(\widetilde T_{d+1})_{\omega}$ induces a map
\begin{equation*}
\begin{array}{llll}
\phi\colon &G&\to&\prod_{-\infty}^0 G\\[1mm]
&g&\mapsto&\bigl(\ldots,\sigma^n(g),\ldots,\sigma(g),g\bigr),
\end{array}
\end{equation*}
where the component of $\phi(g)$ at coordinate $-i$ is the section of $\theta(g)$ at vertex $0^{-i}$. Since $\theta|_G$ is a monomorphism and $\sigma(G)<\Stg(0)$, the map $\phi$ is also a monomorphism. Clearly $\theta(G)\cong\phi(G)\cong G$.

The element $\theta(t)$ is defined as a hyperbolic automorphism $\tau\in\Aut(\widetilde T_{d+1})_{\omega}$ that represents the right shift along the geodesic ray $\alpha=(0^{-\infty},0^\infty)$ defined in Section~\ref{sec:isometries}. Its action on the $\widetilde T_{p+1}$ is shown in Figure~\ref{fig:shift}.
\begin{figure}[h]
\begin{center}
\epsfig{file=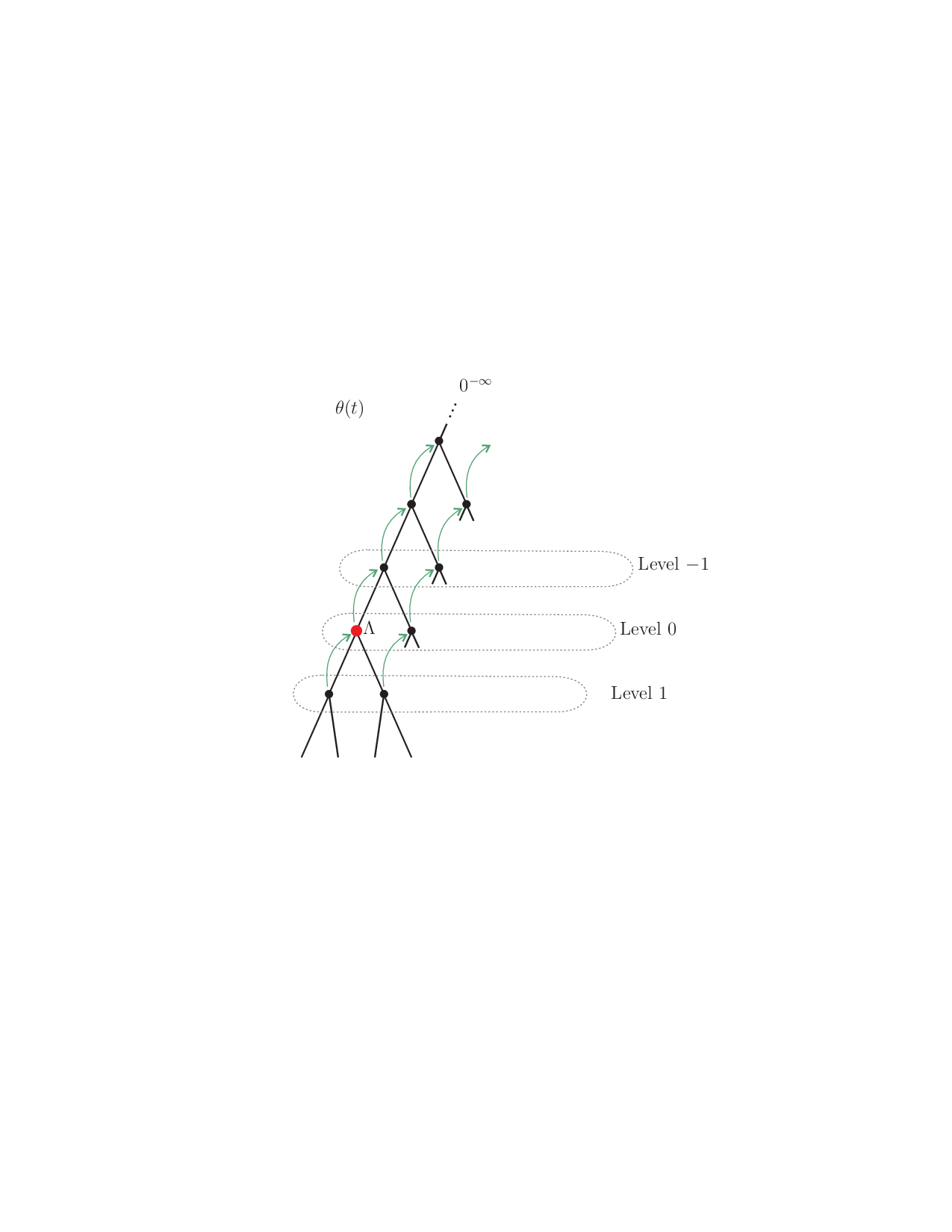}
\caption{Embedding of the stable letter of the HNN extension $\widetilde G$ into $\Aut_0(\widetilde T_{d+1})_{\omega}$\label{fig:shift}}
\end{center}
\end{figure}

We claim that so defined $\theta$ extends to a monomorphism from $\widetilde G$ to $\Aut(\widetilde T_{d+1})_{\omega}$. To justify that $\theta$ is a homomorphism it is enough to check that the family of relations $tgt^{-1}=\sigma(g)$, $g\in G$ agrees with $\theta$:
\[\theta(t)\theta(g)\theta(t)^{-1}=\theta(\sigma(g))\]
or, equivalently,
\[\phi(t)\phi(g)\phi(t)^{-1}=\phi(\sigma(g)).\]

By constructions of $\theta(g)$ we have
\[\phi(\sigma(g))=\bigl(\ldots,\sigma^{n+1}(g),\ldots,\sigma^2(g),\sigma(g)\bigr).\]
On the other hand, the portrait of $\theta(t)\theta(g)\theta(t)^{-1}$ is obtained from the portrait of $\theta(g)$ by shifting all the labels towards the end $0^{-\infty}$, thus
\[\phi(t)\phi(g)\phi(t)^{-1}=\bigl(\ldots,\sigma^{n+1}(g),\ldots,\sigma^2(g),\sigma(g)\bigr)\]
as well, so $\theta$ is a homomorphism.

Now we claim that $\theta$ is injective. Let $H=\theta(G)\cong G$ and $\eta\colon H\to\theta(\widetilde G)$ be a conjugation monomorphism
\[\eta(h)=\theta(t)h\theta(t)^{-1}.\]
We claim that the image of $\eta$ is in $H$. Indeed, if $h=\theta(g)\in H$ then
\[\eta(h)=\theta(t)\theta(g)\theta(t)^{-1}=\theta(tgt^{-1})=\theta(\sigma(g))\in H\]
since $\sigma(g)\in G$. So $\eta\colon H\to H$ is a monomorphism (since $\sigma$ and $\theta|_G$ are monomorphisms) and the diagram
\[\begin{tikzcd}
G \arrow[r, "\sigma"] \arrow[d, "\theta|_G"]& G \arrow[d,"\theta|_G"] \\ H \arrow[r, "\eta" ] & H
\end{tikzcd}
\]
is commutative.

Hence, $\widetilde H=\langle H, \theta(t)\rangle=\langle H, \theta(t)\mid \theta(t)h\theta(t)^{-1}=\eta(h), h\in H\rangle$ is an ascending HNN extension isomorphic to $\widetilde G$ and the embedding $\theta\colon\widetilde G\to\Aut(\widetilde T_{d+1})_{\omega}$ is justified, which completes the proof of item (i).

To prove the transitivity claim from item (ii), we use the well-known (and easily proved) fact that if a self-similar group $G$ satisfies $\pi_i(\Stg(i))=G$ for some vertex $i$ of the first level, and $G$ acts transitively on the first level, then $G$ is self-replicating group and it acts transitively on all levels of the tree.

Given two vertices $u,v\in V(\widetilde T_{d+1})$ we can find $i,j\in\Z$ such that for $\tau=\theta(t)$: $\tau^i(u)$ and $\tau^j(v)$ are vertices at the same level of $T_d^{(0)}$. Then, using level transitivity of $G$ (and hence of $\theta(G)$ on levels of $T_d^{(0)}$), we can map $\tau^i(u)$ to $\tau^j(v)$ by an element of $\theta(G)$.

Now the fact that the closure $\overline{\theta(\widetilde G)}$ in $\Aut(\widetilde T_{d+1})_{\omega}$ is a scale group fixing $\omega=0^{-\infty}$ is obvious. Moreover, $\overline{\theta(\widetilde G)}$ has the structure of the semidirect product $\overline{\theta(G)}\rtimes \langle\tau\rangle$.

%\item[(iii)] The projection homomorphism $\pi_{\rootv}$ maps the stabilizer $\St_{\theta(\widetilde G)}(\rootv)$ of the root vertex $\rootv$ in $\theta(\widetilde G)$ onto $G$ and $\St_{\theta(\widetilde G)}(\rootv)$ is a semidirect product of $G$ and the kernel of $\pi_{\rootv}$.

For item (iii) we observe that $\theta(G)<\St_{\theta(\widetilde G)}(\rootv)$ and $\pi_\rootv(\theta(g))=g$ for all $g\in G$, so $G<\pi_\rootv(\St_{\theta(\widetilde G)}(\rootv))$. On the other hand, since all sections of $\theta(t)$ are trivial, all sections of elements of $\theta(\widetilde G)$ at all vertices are in $G$, therefore $\pi_\rootv(\St_{\theta(\widetilde G)}(\rootv))<G$.

To justify the semidirect product structure of $\St_{\theta(\widetilde G)}(\rootv)$ we note that since the action of $\theta(G)$ on the subtree $T_{\rootv}$ is faithful (because it coincides with the action of $G$ on $T_d$),
\begin{equation}
\label{eqn:intersection}
\theta(G)\cap\bigl(\ker\pi_{\rootv}\cap\theta(\widetilde G)\bigr)=\{1\},
\end{equation}
On the other hand,
\begin{equation}
\label{eqn:product}
\St_{\theta(\widetilde G)}(\rootv)=\bigl(\ker\pi_{\rootv}\cap\theta(\widetilde G)\bigr)\cdot \theta(G).
\end{equation}
Indeed, since all sections of the shift $\theta(t)$ are trivial, for each $s\in\St_{\theta(\widetilde G)}(\rootv)$ we have $g:=s|_\rootv\in G$, so $h=s\cdot\theta(g)^{-1}\in\ker\pi_\rootv\cap\theta(\widetilde G)$. Therefore, $s=h\cdot\theta(g)\in\bigl(\ker\pi_\rootv\cap\theta(\widetilde G)\bigr)\cdot\theta(G)$. Equations~\eqref{eqn:intersection} and~\eqref{eqn:product} yield that $\St_{\theta(\widetilde G)}(\rootv)=\ker\pi_{\rootv}\rtimes \theta(G)$.

Item (iv) is proved analogously to item (iii).

\end{proof}

\subsection{Connection to the results of Willis on scale groups}
\label{ssec:connection_to_willis}
Here we briefly discuss the relation between Theorem~\ref{thmx:hnn} and recent results of George Willis, who noticed a connection between the classes of self-replicating and scale groups~\cite{willis:scale_groups}. Willis showed in Proposition~3.8 that, given a scale group acting on $\Aut(\widetilde T_{d+1})_\omega$, the stabilizers of vertices of $\widetilde T_{d+1}$ in this group project to the same subgroup of $T_d$ that is self-replicating. Conversely, in Proposition~3.10 he proved that given a closed self-replicating group $G$ acting on $T_d$, there is a scale group $\widetilde G<\Aut(\widetilde T_{d+1})_\omega$ such that $\pi_\Lambda(\St_{\widetilde G}(\Lambda))=G$. The group $\widetilde G$ is constructed as a subgroup of $\Aut(\widetilde T_{d+1})_\omega$ consisting of all automorphisms of $\widetilde T_{d+1}$ whose sections at all vertices of that tree belong to $G$. This part of the article does not address the question of when one can embed a closed self-replicating group into a scale groups, but realizes the former as a section of the latter. Theorem~\ref{thmx:hnn}, on the other hand, provides explicit way to embed many self-replicating groups (not necessarily closed) and their HNN extensions into scale groups under condition of liftability. Moreover, it yields many examples of countable discrete transitive subgroups of $\Aut(\widetilde T_{d+1})_\omega$, whose closures are scale groups, as described in Section~\ref{sec:examples} below.

\section{Examples of liftable groups}
\label{sec:examples}

As was mentioned in the introduction, there are several main sources of groups satisfying the conditions of Theorem~\ref{thmx:hnn}.

\subsection{Liftable groups coming from essentially free actions}
The first class is the class of self-replicating groups acting \emph{essentially freely} on the boundary of $T_d$ and the second type are the groups with a suitable $L$-presentation. Recall that the action of a countable group $G$ on a probability measure space $(X,\mu)$ by measure preserving transformations is called essentially free if the measure of a set of points in $X$ that have non-trivial stabilizers is equal to 0 (or, equivalently, for each $g\in G, g\neq1$, $\mu(\mathrm{Fix}(g))=0$, where $\mathrm{Fix}(g)$ is the set of fixed points of element $g$). In our case $X=\partial T_d$ and $\mu$ is the uniform Bernoulli measure defined in Section~\ref{sec:pre}. The groups acting essentially freely on the boundaries of rooted trees have been studied in~\cite{grigorch:dynamics11eng,bartholdi_s:bsolitar,kambites-s-s:spectra,grigorch_s:essfree,nekrashevych_p:scale_invariant}. Two examples in this class are the lamplighter group $\mathcal L=\Z_2\wr\Z$ and the Baumslag-Solitar group $BS(1,3)$ (see~\cite{grigorch_s:essfree}) that are generated by automata shown in the left and right sides of Figure~\ref{fig:lamplighter} respectfully. We study them in detail below.

\begin{figure}
\begin{center}
\epsfig{file=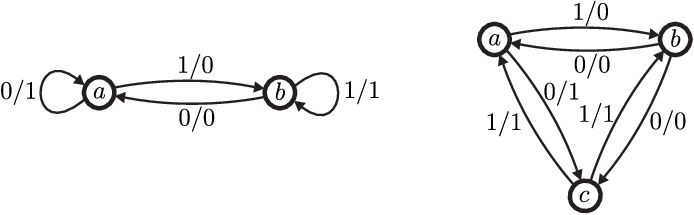}
\end{center}
\caption{The automata generating the lamplighter group (left) and Baumslag-Solitar group $BS(1,3)$ (right)\label{fig:lamplighter}}
\end{figure}

\begin{proposition}
\label{prop:ess_free_liftable}
If a group $G<\Aut(T_d)$ acts on $\partial T_d$ essentially freely and is self-replicating, then it is liftable.
\end{proposition}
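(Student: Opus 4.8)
The plan is to show that essential freeness forces the kernel of the projection $\pi_i\colon\Stg(i)\to G$ to be trivial for \emph{every} first-level vertex $i$, so that (by self-replication) $\pi_i$ is already an isomorphism and we may simply take $\sigma:=\pi_i^{-1}$ as the lifting. In other words, under these hypotheses liftability comes for free, and the split extension $\Stg(i)=\ker\pi_i\rtimes G$ of Definition~\ref{def:liftable} is the trivial one with $\ker\pi_i=\{1\}$.

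Concretely, the steps are as follows. First, fix any $i\in X$; since $G$ is self-similar, $\pi_i$ is a homomorphism $\Stg(i)\to G$, and since $G$ is self-replicating this homomorphism is surjective. It therefore remains only to prove injectivity, i.e. $\ker\pi_i=\{1\}$. Second, take $g\in\ker\pi_i$, so that $g$ fixes the vertex $i$ and $g|_i=1$. By~\eqref{eqn:section_words}, for every $v\in X^*$ we have $g(iv)=i\,g|_i(v)=iv$, so $g$ fixes every vertex of the subtree rooted at $i$ and hence fixes pointwise the cylinder set $C_i\subset\partial T_d$ consisting of all infinite words beginning with $i$. Thus $\mathrm{Fix}(g)\supseteq C_i$, so $\mu(\mathrm{Fix}(g))\ge\mu(C_i)=1/d>0$. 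Third, invoke essential freeness of the action of $G$ on $(\partial T_d,\mu)$: since $\mu(\mathrm{Fix}(g))=0$ for every nontrivial $g\in G$, we must have $g=1$. Hence $\ker\pi_i=\{1\}$.

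It follows that $\pi_i\colon\Stg(i)\to G$ is an isomorphism of groups, and setting $\sigma:=\pi_i^{-1}\colon G\to\Stg(i)$ yields a monomorphism with $\pi_i\circ\sigma=\mathrm{id}_G$, which is precisely a lifting; therefore $G$ is liftable (and $i$ may be chosen arbitrarily). I do not expect a genuine obstacle here: the only point that requires a moment's care is the passage from ``$g$ lies in $\ker\pi_i$'' to ``$g$ fixes a subset of $\partial T_d$ of positive measure'', and this is immediate once one unwinds the definition of the section $g|_i$ and of the cylinder $C_i$. The argument uses only that $1/d>0$, so it makes no use of the value of $\mu$ beyond its being a positive probability measure giving cylinders positive mass.
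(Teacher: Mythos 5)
Your proposal is correct and follows essentially the same argument as the paper: essential freeness forces $\ker\pi_i$ to be trivial (a nontrivial element of the kernel would fix the positive-measure cylinder $C_i$), self-replication gives surjectivity, and the lifting is then the inverse $\pi_i^{-1}$. The only cosmetic difference is that the paper fixes the vertex $0$ rather than an arbitrary $i$.
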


\begin{proof}
We identify $T_d$ with $T(X)$ for an alphabet $X=\{0,1,\ldots,d-1\}$. Since $G$ acts essentially freely, the projection homomorphism $\pi_0\colon\Stg(0)\to G$ for a letter $0\in X$ must be injective. Indeed, if there is a $g\in\Stg(0)$, $g\neq 1$, such that $g|_0$ is trivial, then this $g$ would be in the stabilizers of all points from $\partial T_d$ that start with 0. Clearly, the set of such points has a non-zero measure, thus contradicting to the fact that the action of $G$ is essentially free. Therefore, since $G$ is self-replicating and, hence, $\pi_0(\Stg(0))=G$, we can uniquely define the right inverse $\sigma$ of the projection $\pi_0$ that will be a monomorphism as required by Theorem~\ref{thmx:hnn}.
\end{proof}

\begin{corollary}
\label{cor:ess_free}
Suppose a group $G<\Aut(T_d)$ acts on $\partial T_d$ essentially freely and is self-replicating. Then the ascending HNN extension of $G$ given by the lifting from Proposition~\ref{prop:ess_free_liftable} embeds into the group $\Aut(\widetilde T_{d+1})_{\omega}$, $\omega=0^{-\infty}$ with the image acting transitively on the set of vertices of $\widetilde T_{d+1})$.% (which, in the case $d=p$ is prime, is isomorphic to $\D(\Q_p)$).
\end{corollary}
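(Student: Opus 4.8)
The plan is to obtain this statement by feeding the conclusion of Proposition~\ref{prop:ess_free_liftable} directly into Theorem~\ref{thmx:hnn}. First, I would invoke Proposition~\ref{prop:ess_free_liftable}: under the two standing hypotheses --- essential freeness of the action on $\partial T_d$ and self-replication --- the group $G$ is liftable. Concretely, identifying $T_d$ with $T(X)$ for $X=\{0,1,\dots,d-1\}$, the projection $\pi_0\colon\Stg(0)\to G$ is an isomorphism, and its inverse $\sigma\colon G\to\Stg(0)$ is the lifting; this is precisely ``the lifting from Proposition~\ref{prop:ess_free_liftable}'' referred to in the statement, and the associated ascending HNN extension $\widetilde G=G*_\sigma$ is exactly the group to which Theorem~\ref{thmx:hnn} applies.

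Next, I would check that the remaining hypothesis of Theorem~\ref{thmx:hnn}(ii) is automatically in force: by Definition~\ref{def:self-replicating}, a self-replicating group acts transitively on $X$, i.e.\ on the first level of $T_d$. Consequently Theorem~\ref{thmx:hnn}(i) furnishes an embedding $\theta\colon\widetilde G\hookrightarrow\Aut(\widetilde T_{d+1})_{\omega}$ with $\omega=0^{-\infty}$, and Theorem~\ref{thmx:hnn}(ii) then gives that $\theta(\widetilde G)$ acts transitively on the whole vertex set of $\widetilde T_{d+1}$ (and, in fact, that its closure in $\Aut(\widetilde T_{d+1})_{\omega}$ is a scale group, though this is not needed for the corollary).

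There is essentially no obstacle here: the content of the corollary is entirely contained in the two cited results. The only point deserving an explicit (one-line) remark is that ``self-replicating'' already encodes first-level transitivity, so that the hypothesis ``$G$ acts transitively on the first level'' appearing in Theorem~\ref{thmx:hnn}(ii) need not be assumed separately. I would therefore present the proof as the two-sentence combination of Proposition~\ref{prop:ess_free_liftable} with Theorem~\ref{thmx:hnn}(i)--(ii), with this remark inserted to justify applicability of part~(ii).
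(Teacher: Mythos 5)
Your proposal is correct and follows exactly the route the paper intends: the corollary is stated without a separate proof precisely because it is the immediate combination of Proposition~\ref{prop:ess_free_liftable} (liftability, with $\sigma$ the inverse of the injective, surjective projection $\pi_0$) and Theorem~\ref{thmx:hnn}(i)--(ii). Your one-line remark that Definition~\ref{def:self-replicating} already includes transitivity on the first level, so part~(ii) applies, is the only point needing mention, and you handled it correctly.
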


There is an interesting connection here to the class of scale-invariant groups. We recall the definition below.
\begin{definition}
\label{def:scale_invariant}
A finitely generated group $G$ is called \emph{B-scale-invariant} if there exists a nested sequence of subgroups of finite index in $G$, each isomorphic to $G$, and whose intersection is a finite group. If the latter intersection is trivial, then $G$ is called \emph{scale-invariant}.
\end{definition}

This class was introduced by Benjamini (this is why we add ``B'' in front of ``scale-invariant'') who conjectured that every such group is virtually nilpotent. A counterexample to this conjecture, the lamplighter group $\mathcal L$ discussed in Examples~\ref{ex:lamplighter} and~\ref{ex:lamplighter2}, was provided implicitly in~\cite{grigorch_z:lamplighter} (the paper was printed before the conjecture was stated) and explicitly in~\cite{nekrashevych_p:scale_invariant}, where many other examples where produced. It was shown in~\cite[Proposition~3.13]{grigorch_s:essfree} that every self-replicating group acting essentially freely on $\partial T_d$ is scale-invariant. That proposition gave a new place to search for scale-invariant groups. Therefore, by Proposition~\ref{prop:ess_free_liftable}, there are examples of liftable scale-invariant groups, some of which are given below.

\begin{example}
\label{ex:lamplighter}
Lamplighter group $\mathcal L\cong\Z_2\wr\Z$ is a 2-generated group that can be realized as a self-similar group defined by the following wreath recursion corresponding to the automaton representation given by automaton on the left side of Figure~\ref{fig:lamplighter}~\cite{gns00:automata,grigorch_z:lamplighter}:
\[\begin{array}{rcl}
a&=&(a,b)\varepsilon\\
b&=&(a,b)
\end{array}
\]

The self-replicating action of $\mathcal L$ on $T_2$ has been studied in~\cite{grigorch_z:lamplighter,nekrashevych_p:scale_invariant,grigorch_k:lamplighter}. In particular, it was shown in~\cite{grigorch_s:essfree} that this group acts essentially freely on $\partial T_2$ so $\mathcal L$ satisfies the conditions of Proposition~\ref{prop:ess_free_liftable} and there exist monomorphisms $\sigma_0\colon\mathcal L\to\mathop{\rm St}\nolimits_{\mathcal L}(0)$ and $\sigma_1\colon\mathcal L\to\mathop{\rm St}\nolimits_{\mathcal L}(1)$ given by:
\begin{equation}
\label{eqn:sigma1_2_lamplighter}
\sigma_0\colon\left\{
\begin{array}{l}
a\mapsto b\\
b\mapsto b^a
\end{array}\right.\qquad\qquad
\sigma_1\colon\left\{
\begin{array}{l}
a\mapsto b^a\\
b\mapsto b
\end{array}\right.
\end{equation}
with $\pi_i\sigma_i=id_{\mathcal L}$, $i=0,1$, because $b=(a,b)$ and $b^a=(b,a)$. In other words, both $\sigma_0$ and $\sigma_1$ are liftings of $\mathcal L$. Hence, we can apply Theorem~\ref{thmx:hnn} and embed corresponding HNN extensions in $\Aut(\widetilde T_3)_\omega\cong\D(\Q_2)$.

The lamplighter group will appear again in the last section of the paper.

%\[\widetilde L = \langle a,b,t\mid \text{relations in}\ \mathcal L, tat^{-1}=b, tbt^{-1}=b^{-1}a^2\rangle\]
%%XXXX - check the presentation and move to the second type of examples
%embeds into $\D(\Q_2)$. Observe that $\mathcal L$ is not finitely presented group, while $\widetilde{\mathcal L}$ is finitely presented. This is a common feature of many examples (some of which will follow).

\end{example}

\begin{example}
The Baumslag-Solitar group $BS(1,3)=\langle s,r\mid rsr^{-1}=s^3\rangle$ can be realized as a 3-generated self-similar group defined by the following wreath recursion corresponding to the automaton representation given by automaton on the right side of Figure~\ref{fig:lamplighter} (see, for example,~\cite{bartholdi_s:bsolitar,bondarenko_gkmnss:full_clas32_short}):
\[\begin{array}{rcl}
a&=&(c,b)\varepsilon\\
b&=&(a,c)\\
c&=&(b,a)
\end{array}
\]

With respect to this generating set it has the following presentation:
\[BS(1,3)\cong\langle a,b,c\mid c=ab^{-1}a,bab^{-1}=ab^{-1}ab^{-1}a\rangle,\]
where the isomorphism between the two presentations given above is induced by $s\mapsto ab^{-1}$ and $r\mapsto b$ (see~\cite{bondarenko_gkmnss:full_clas32_short}, where the automaton groups act on the left and thus all words in generators have to be reversed to align with the notation of the present paper).

It acts essentially freely on $\partial T_2$~\cite{grigorch_s:essfree}, is self-replicating, and satisfies the conditions of Proposition~\ref{prop:ess_free_liftable}. The monomorphism $\sigma$ in this case is
\[\sigma\colon\left\{
\begin{array}{l}
a\mapsto b\\
b\mapsto c\\
c\mapsto bc^{-1}b
\end{array}\right..
\]
since $b=(a,c)$, $c=(b,a)$, and $bc^{-1}b=(ab^{-1}a,ca^{-1}c)=(c,ca^{-1}c)$.

%By Corollary~\ref{cor:ess_free} the ascending HNN extension of $BS(1,3)$ by homomorphism $\sigma$
%\[\langle a,b,c,t\mid c=ab^{-1}a,b(ab^{-1})b^{-1}=(ab^{-1})^3, tat^{-1}=b, tbt^{-1}=c, tct^{-1}=bc^{-1}b\rangle\]
%embeds into $\D(\Q_2)$.
\end{example}

We note that there are self-similar groups acting on $\partial T_d$ essentially freely that are not self-replicating. These examples include the free group $F_3$ and the free product $(\Z/2\Z)*(\Z/2\Z)*(\Z/2\Z)$~\cite{grigorch_s:essfree,bondarenko_gkmnss:full_clas32_short}. Since the stabilizer of any vertex projects onto a proper subgroup of $G$, so no lifting could exist. We finish this section with the following question.

\begin{question}
Are there non-amenable liftable groups?
\end{question}

%Observe that as $\mathcal L$ and $BS(1,3)$ are metabelian groups, the corresponding HNN extensions are solvable of step $3$. %%% Not every HNN extension is a semidirect product - only those where the entomorphism is an automorphism

\subsection{Liftable groups admitting finite $L$-presentations}

Now let us switch to groups admitting $L$-presentations. We refer the reader to~\cite{bartholdi:epimorphic03} for more details on this type of presentations by generators and relators. We give here somewhat restricted definition compared to the one given in~\cite{bartholdi:epimorphic03} that will be useful in our context.

\begin{definition}
A group $G$ admits an \emph{$L$-presentation} if there is a generating set $S$ for $G$, subsets $Q$ and $R$ of a free group $F_S$ on $S$ and an endomorphism $\phi\colon F_S\to F_S$ such that
\begin{equation}
\label{eq:lpres}
G\cong\left\langle S\mid Q, \phi^n(R), n\geq 0\right\rangle.
\end{equation}
\end{definition}

An $L$-presentation~\eqref{eq:lpres} is called \emph{finite} if $S,Q,R$ are finite and \emph{ascending} if $Q$ is empty. In the case of ascending $L$-presentation the endomorphism $\phi$ induces the endomorphism $\bar\phi\colon G\to G$. We call $L$-presentation~\eqref{eq:lpres} \emph{injective} if it is ascending and $\bar\phi$ is injective. In this case we can associate with $G$ an ascending HNN extension
\[\widetilde G=G*_{\bar\phi}=\langle S,t\mid R, s^t=\phi(s), s\in S\rangle.\]
If a self-similar group $G$ acting on $T(X)$ admits an injective $L$-presentation
\begin{equation}
\label{eq:lpres_ascending}
G\cong\left\langle S\mid \phi^n(R), n\geq 0\right\rangle,
\end{equation}
such that $\mathrm{Im}\bar\phi\subset\St_G(i)$ for some $i\in X$ and $\bar\phi$ is a lifting according to Definition~\ref{def:liftable}, we call presentation~\eqref{eq:lpres_ascending} \emph{liftable}. Observe that if~\eqref{eq:lpres_ascending} is liftable and finite then $\widetilde G$ is a finitely presented group.

%The second class of self-similar groups that serves as a source of examples for Theorem~\ref{thmx:hnn} is the class of groups of \emph{branch} type admitting finite $L$-presentations. We refer the reader to~\cite{bar_gs:branch,lysionok:presentation,bartholdi:epimorphic03,grigorch_ss:img} for more details. Recall, that a group $G$ admits an \emph{$L$-presentation} if there is a generating set $S$ of $G$, sets $Q$ and $R$ of words from the free group $F_S$ on $S$, and a set $\Phi$ of free group homomorphisms $F_S\to F_S$, such that
%\[G\cong\left\langle S\mid Q, \cup_{\phi\in\Phi^*}\phi(R)\right\rangle,\]
%where $\Phi^*$ denotes the monoid generated by $\Phi$. An $L$-presentation is called \emph{finite} if all $S,Q,R,\Phi$ are finite, \emph{ascending} if $Q=\varnothing$, and \emph{injective} if all $\phi\in\Phi$ are injective. It follows directly from the definition that for ascending $L$-presentations each $\phi\in\Phi^*$ induces an endomorphism of $G$. We will call an ascending $L$-presentation \emph{liftable} if each $\phi\in\Phi^*$ induces an injective endomorphism of $G$.
%
%In the examples that follow, the substitution $\sigma$, inducing a monomorphism from $G$ into $\Stg(i)$ for corresponding vertex $i$ of the first level, plays a role of the generator of $\Phi$ in the corresponding ascending $L$-presentation. The finiteness of the $L$-presentation then implies that the corresponding HNN extension is finitely presented (see Theorem~2.4 in~\cite{bartholdi:epimorphic03}).

As the main example of this type we now explicitly build an embedding of the finitely presented group $\fpG$ from~\cite{grigorch:example} into $\D(\mathbb Q_2)$ and study in detail some of its properties.

\begin{example} (Finitely presented amenable but not elementary amenable group $\fpG$).
\label{ex:grigorchuk}

\noindent Recall that the group $\G=\langle a,b,c,d\rangle$ was defined in~\cite{grigorch:burnside} as an example of an infinite finitely generated torsion group. Later it was shown to be the first example of a group of intermediate growth in~\cite{grigorch:degrees} and that it has many other unusual properties~\cite{grigorch:solved} like being branch and just-infinite. It acts on the rooted binary tree $T_2$ and can be generated by a Mealy automaton shown in Figure~\ref{fig:grig_aut} or, equivalently, by the following wreath recursion:
\begin{equation}
\label{eqn:wreath_grigorchuk}
\begin{array}{rcl}
a&=&(1,1)\varepsilon,\\
b&=&(a,c),\\
c&=&(a,d),\\
d&=&(1,b),
\end{array}
\end{equation}
where $\varepsilon$ denotes the nontrivial permutation of $X=\{0,1\}$.

The group $\G$ has a finite $L$-presentation~\eqref{eqn:presentationLys}, which formally is not ascending, but has an advantage of being minimal, i.e., no relators from~\eqref{eqn:presentationLys} can be dropped without changing $\G$~\cite{gr99:schur}. A slight modification of~\eqref{eqn:presentationLys} to
\begin{equation}
\label{eqn:presentationAsc}
\G=\langle a,b,c,d\mid \sigma^i(a^2), \sigma^i(b^2), \sigma^i(c^2), \sigma^i(d^2), \sigma^i(bcd), \sigma^i((ad)^4),\sigma^i((adacac)^4), i\geq0\rangle
\end{equation}
allows to associate with $\G$ a finite ascending $L$-presentation, which is liftable with the lifting $\sigma\colon\G\to\Stg(1)$ induced by substitution~\eqref{eqn:sigma_grigorchuk} as $\sigma$ is the right inverse of the projection map $\pi_1$. This follows from the wreath recursion~\eqref{eqn:wreath_grigorchuk} and a relation $aca=(d,a)$. Then the corresponding HNN extension $\fpG$ is a finitely presented group given by presentation~\eqref{eqn:presentation} (or by a more sophisticated but shorter presentation~\eqref{eqn:presentation_short}). By Theorem~\ref{thmx:hnn} the group $\fpG$ embeds into $\Aut(\widetilde T_{3})_{\omega}$ as a vertex transitive group. Below we will describe in detail the images under the embedding $\theta$ of the generators $a,b,c,d$ of $\G$ in $\Aut_0(\widetilde T_{3})$. Since $\sigma$ is the right inverse of $\pi_1$, and not of $\pi_0$, we will construct the action on the tree isomorphic to $\widetilde T_{3}$ for which the end $1^{-\infty}$ is fixed (so the tree ``grows up to the left'' as shown in Figure~\ref{fig:theta_bcd}). We will also denote this tree $\widetilde T_{3}$, but one has to keep in mind the difference. With this convention, in particular, $\theta(t)$ will move vertices of $\widetilde T_{3}$ along the end $1^{-\infty}$ up and to the left.

\begin{figure}
\begin{center}
\epsfig{file=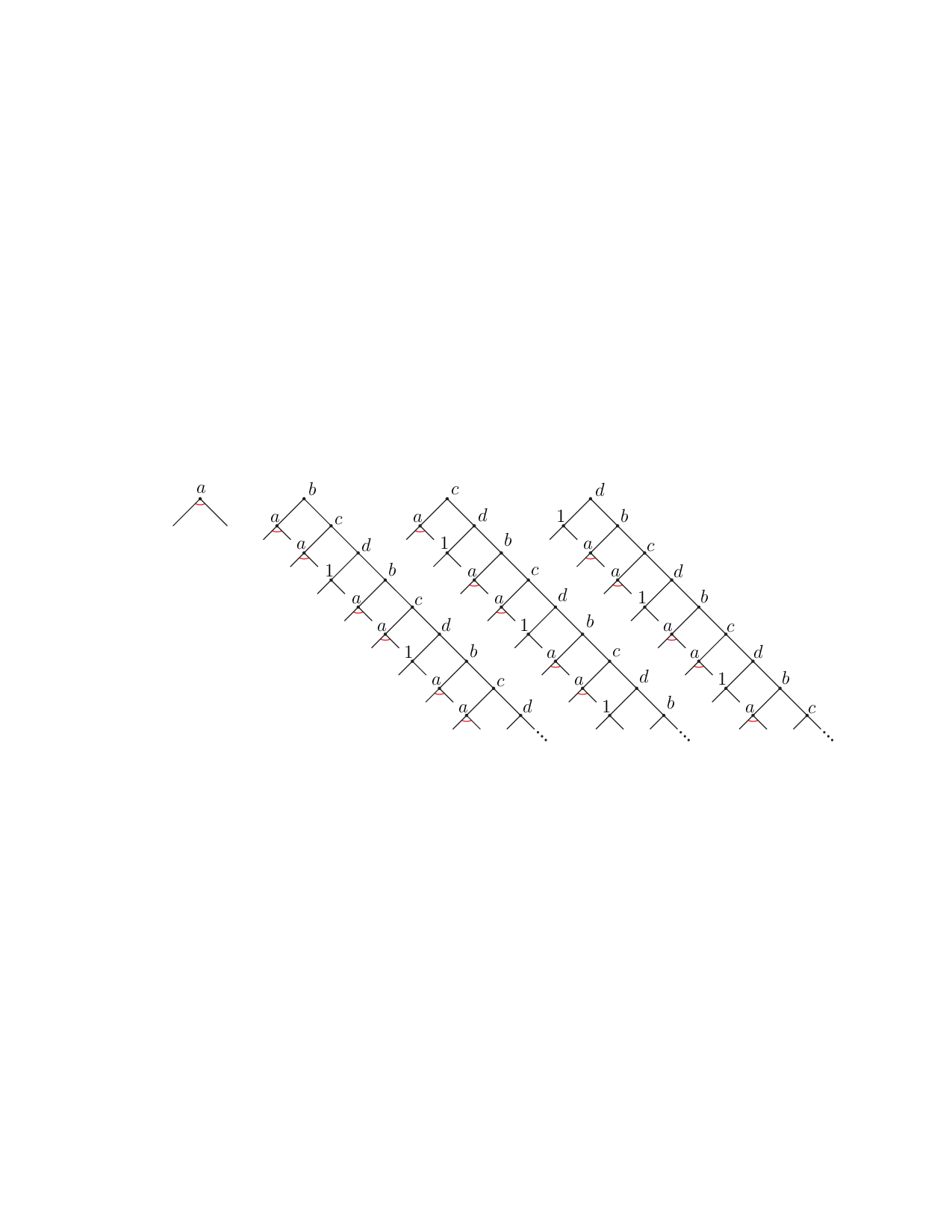}
\end{center}
\caption{Portraits of the generators of the Grigorchuk group\label{fig:grig_gens}}
\end{figure}

The action of the generators $a,b,c,d$ of $\G$ on $T_2$ is given by the portraits shown in Figure~\ref{fig:grig_gens}. Recall that portraits for elements of $\Aut(T_d)$ were defined in Section~\ref{sec:pre} and for elements of $\Aut(\widetilde T_{d+1})_{\omega}$ in Section~\ref{sec:isometries}. For the generators $b,c,d$ the portraits are 3-periodic. Hence for $\sigma$ we get the following:
\[\sigma^n(b)=\left\{
\begin{array}{l}
b,\ n\equiv0\ (\mathrm{mod}\ 3),\\
d,\ n\equiv1\ (\mathrm{mod}\ 3),\\
c,\ n\equiv2\ (\mathrm{mod}\ 3),
\end{array}\right.
\sigma^n(c)=\left\{
\begin{array}{l}
c,\ n\equiv0\ (\mathrm{mod}\ 3),\\
b,\ n\equiv1\ (\mathrm{mod}\ 3),\\
d,\ n\equiv2\ (\mathrm{mod}\ 3),
\end{array}\right.
\sigma^n(d)=\left\{
\begin{array}{l}
d,\ n\equiv0\ (\mathrm{mod}\ 3),\\
c,\ n\equiv1\ (\mathrm{mod}\ 3),\\
b,\ n\equiv2\ (\mathrm{mod}\ 3).
\end{array}\right.\]

Therefore, the portraits of $\theta(b)$, $\theta(c)$, and $\theta(c)$ on $\widetilde T_{3}$ will be periodic extensions upwards of the portraits of $b$, $c$, and $d$ on $T_2$, as shown in Figure~\ref{fig:theta_bcd}.

\begin{figure}[h]
\begin{center}
\epsfig{file=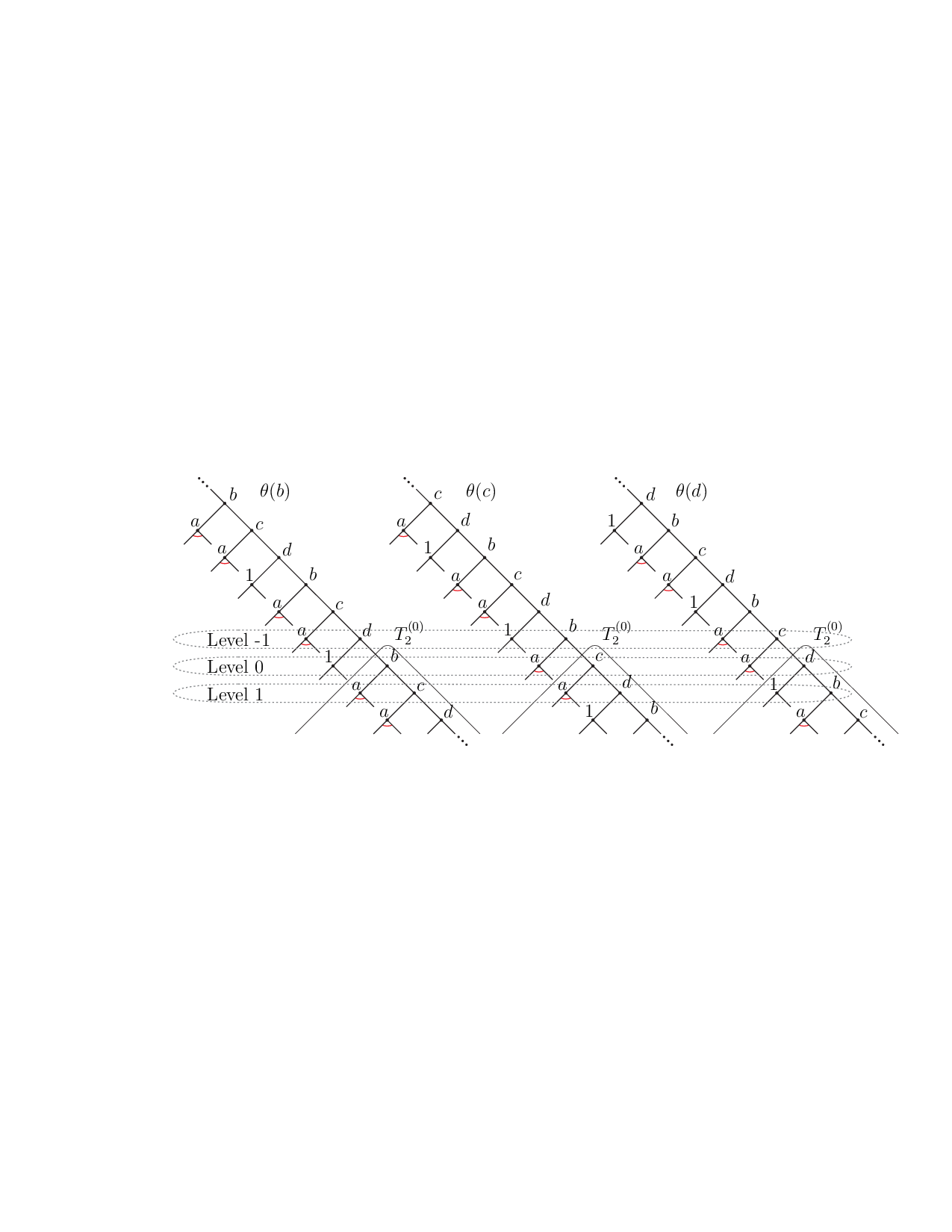}
\end{center}
\caption{Portraits of the $\theta(b)$, $\theta(c)$, and $\theta(d)$ on $\widetilde T_{3}$\label{fig:theta_bcd}}
\end{figure}

To describe $\theta(a)$ we calculate the iterations $\sigma^n(a)$.

\begin{proposition}[\cite{grigorch_ln:spectra_schreier_schroedinger18}]
\label{prop:P_n}
For each $n\geq 0$ we have $\sigma^n(a)=P_{n}$, where the sequence $\{P_n\}_{n\geq0}$ consists of palindromic words defined recursively as follows:
\[P_0=a,\quad P_{n+1}=P_nq_nP_n, n\geq 1,\]
and
\[q_n=\left\{
\begin{array}{l}
c,\ n\equiv0\ (\mathrm{mod}\ 3),\\
b,\ n\equiv1\ (\mathrm{mod}\ 3),\\
d,\ n\equiv2\ (\mathrm{mod}\ 3).
\end{array}\right.\]
\end{proposition}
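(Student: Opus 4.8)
The plan is to prove the formula $\sigma^n(a) = P_n$ by induction on $n$, where $\sigma$ is the lifting endomorphism induced by substitution~\eqref{eqn:sigma_grigorchuk}. The base case $n=0$ is the identity $\sigma^0(a) = a = P_0$. For the inductive step, I would compute $\sigma^{n+1}(a) = \sigma(\sigma^n(a)) = \sigma(P_n)$ and compare with the recursion $P_{n+1} = P_n q_n P_n$. Since $\sigma$ is a homomorphism, $\sigma(P_n) = \sigma(P_{n-1} q_{n-1} P_{n-1})$, but it is cleaner to work directly from the definition of $\sigma$ on generators: $\sigma(a) = aca$, and on $b,c,d$ the substitution cyclically permutes $d \to c \to b \to d$ (reading~\eqref{eqn:sigma_grigorchuk}). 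So the inductive hypothesis $\sigma^n(a) = P_n = P_{n-1} q_{n-1} P_{n-1}$ gives $\sigma^{n+1}(a) = \sigma(P_{n-1}) \, \sigma(q_{n-1}) \, \sigma(P_{n-1})$. By a nested inductive claim, $\sigma(P_{n-1}) = P_n$ — this is exactly the inductive hypothesis applied one step further — so it remains to identify $\sigma(q_{n-1})$ with the correct ``middle letter''.

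The key computation is tracking the middle letters. I would set up the claim that $\sigma^{n}(a)$, written out as a word, has the form $P_n = P_{n-1} q_{n-1} P_{n-1}$ where the single-letter ``spacer'' $q_{n-1}$ follows the announced $3$-periodic pattern $c, b, d$ for $n-1 \equiv 0, 1, 2 \pmod 3$. The induction on the structure of $P_n$ then reduces to verifying two facts simultaneously: (1) $\sigma$ maps $P_n$ to $P_{n+1}$ (so that the ``outer copies'' propagate correctly), and (2) $\sigma$ sends the spacer $q_{n}$ to the spacer $q_{n+1}$, i.e. $\sigma(c) = b$, $\sigma(b) = d$, $\sigma(d) = c$, which is precisely the cyclic action of $\sigma$ on $\{b,c,d\}$ shifting the residue by $1 \pmod 3$. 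Combining these, $\sigma(P_n q_n P_n) = P_{n+1} q_{n+1} P_{n+1} = P_{n+2}$, closing the induction. I would also note that the first nontrivial case, $\sigma(a) = aca = P_0 c P_0 = P_1$ with $q_0 = c$, matches the base of the recursion and pins down the indexing convention.

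The main subtlety — not really an obstacle but the point requiring care — is that the recursion $P_{n+1} = P_n q_n P_n$ is a recursion on words in the free monoid/group on $\{a,b,c,d\}$, whereas $\sigma$ is defined on the \emph{group} $\G$; one must check that the identity $\sigma^n(a) = P_n$ holds at the level of group elements and that the self-similar structure~\eqref{eqn:wreath_grigorchuk} together with the relation $aca = (d,a)$ (noted in Example~\ref{ex:grigorchuk}) is consistent with this, ensuring $\sigma$ is well-defined as a lifting $\G \to \Stg(1)$. Since the statement is cited from~\cite{grigorch_ln:spectra_schreier_schroedinger18}, I would present the induction compactly, emphasizing the two-part inductive claim (propagation of outer copies plus cyclic shift of spacers) and leaving the verification that each $\sigma^n(a)$ stabilizes the vertex $1$ (hence lies in $\Stg(1)$) as a consequence of $\sigma$ being a lifting. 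The palindromicity of $P_n$ is immediate from the recursion $P_{n+1} = P_n q_n P_n$ by induction, since concatenating a palindrome, a single letter, and the same palindrome yields a palindrome.
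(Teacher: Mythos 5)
Your proof is correct and takes essentially the same route as the paper: a (strong) induction on $n$ in which $\sigma(P_{n-1})=P_n$ comes from the inductive hypothesis and the identity $\sigma(q_{n-1})=q_n$ comes from the fact that $\sigma$ cyclically permutes $b,c,d$, anchored by the base computation $\sigma(a)=aca=P_0q_0P_0$. Your additional remarks on words versus group elements and on palindromicity are harmless extras not needed beyond what the paper's own induction already contains.
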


\begin{proof}
Follows by induction on $n$. Clearly the base cases $n=0$ and $n=1$ are trivially satisfied as $\sigma(a)=aca=P_0q_0P_0$. Now assume that $n>1$ and $\sigma^i(a)=P_{i}$ for all $i\leq n$. Then
\[P_{n+1}=\sigma^n(a)=\sigma(P_n)=\sigma(P_{n-1}q_{n-1}P_{n-1})=\sigma(P_{n-1})\sigma(q_{n-1})\sigma(P_{n-1})=P_nq_nP_n,\]
where the last equality follows from the induction assumption and the definition of $q_n$.
\end{proof}

Observe that the sequence of palindromes $P_n$ play an important role in the study of spectra of Schreier graphs associated with $\G$~\cite{grigorch_ln:spectra_schreier_schroedinger18}.

We complete the description of the portrait of $\theta(a)$ by describing the sections of $\sigma^n(a)$ at vertex 0 of the first level of the rooted binary tree $T_2$ for all $n\geq 1$. Note, that these are also the sections of $\theta(a)$ at vertices $1^{-n}0$ of the unrooted ternary tree $\widetilde T_3$. For $n=1$ we have
\[\sigma(a)|_0=(aca)|_0=d.\]
Define an endomorphisms $\alpha_n\colon \G\to \G, n\geq 1$ as $\alpha_n=\pi_0\circ\sigma^n$, that is, for each $g\in\G$ $\alpha_n(g)=\sigma^n(g)|_0$. Visually, applying $\sigma^n$ to an element $g\in\G$ corresponds to going $n$ steps in the North-West direction in the portrait of $\theta(g)$ (along the rightmost bi-infinite path) from the vertex $\rootv$ labelled by $g$. Applying $\alpha_n$ corresponds to going $n$ steps in the North-West direction, and then one step down in the South-West direction. So in the notation of Proposition~\ref{prop:P_n} we have
\begin{equation}
\label{eqn:alpha}
\alpha_n(a)=\sigma^n(a)|_0=\sigma(\sigma^{n-1}(a))|_0=\sigma(P_{n-1})|_0=\alpha_1(P_{n-1}).
\end{equation}

\begin{proposition}
For each $n\geq 1$ we have
\[\alpha_n(a)=\left\{
\begin{array}{l}
d,\ n=1,\\
dad,\ n=2,\\
1,\ n\equiv0\ (\mathrm{mod}\ 3), n\geq 3\\
a,\ n\equiv1\ (\mathrm{mod}\ 3)\ \text{or}\ n\equiv2\ (\mathrm{mod}\ 3), n\geq 3.
\end{array}\right.\]
\end{proposition}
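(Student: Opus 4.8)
The plan is to run a short induction on $n$ (equivalently, on the index $m=n-1$ of the palindrome $P_m$), feeding the recursion $P_{m+1}=P_mq_mP_m$ of Proposition~\ref{prop:P_n} through the endomorphism $\alpha_1$ and collapsing the resulting words with the torsion relations $a^2=d^2=1$ of $\G$. The starting point is the identity $\alpha_n(a)=\alpha_1(P_{n-1})$ established in~\eqref{eqn:alpha}, which reduces everything to understanding $\alpha_1$ on the $P_m$'s.

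First I would record the effect of $\alpha_1=\pi_0\circ\sigma$ on the generators, directly from the wreath recursion~\eqref{eqn:wreath_grigorchuk} together with the identity $aca=(d,a)$: namely $\alpha_1(a)=(aca)|_0=d$, $\alpha_1(b)=d|_0=1$, $\alpha_1(c)=b|_0=a$, and $\alpha_1(d)=c|_0=a$. It is worth noting here that $aca,b,c,d$ all fix the entire first level, so $\sigma$ in fact lands in $\St_{\G}(X)$ and $\alpha_1$ is a genuine endomorphism of $\G$, as already asserted before the proposition. In particular $\alpha_1(q_m)=a$ when $m\equiv 0$ or $2\ (\mathrm{mod}\ 3)$ (that is, $q_m\in\{c,d\}$) and $\alpha_1(q_m)=1$ when $m\equiv 1\ (\mathrm{mod}\ 3)$ (that is, $q_m=b$).

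Next, writing $R_m:=\alpha_1(P_m)$, so that $\alpha_n(a)=R_{n-1}$ by~\eqref{eqn:alpha}, applying the homomorphism $\alpha_1$ to $P_{m+1}=P_mq_mP_m$ gives $R_{m+1}=R_m\,\alpha_1(q_m)\,R_m$. I would then compute the three base values $R_0=\alpha_1(a)=d$, $R_1=R_0aR_0=dad$, and $R_2=R_1\cdot 1\cdot R_1=(dad)^2=1$, the last equality using $d^2=a^2=1$. For the inductive step, assume $m\geq 2$ and that $R_m=1$ if $m\equiv 2\ (\mathrm{mod}\ 3)$ and $R_m=a$ if $m\equiv 0,1\ (\mathrm{mod}\ 3)$. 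If $m\equiv 2$ then $R_{m+1}=1\cdot a\cdot 1=a$ and $m+1\equiv 0$; if $m\equiv 0$ (so $m\geq 3$) then $R_{m+1}=a\cdot a\cdot a=a$ and $m+1\equiv 1$; if $m\equiv 1$ (so $m\geq 4$) then $R_{m+1}=a\cdot 1\cdot a=1$ and $m+1\equiv 2$ — each time using $a^2=1$. This closes the induction, and translating back via $n=m+1$ (noting $n\equiv 0\ (\mathrm{mod}\ 3)\iff n-1\equiv 2$, and $n\equiv 1,2\iff n-1\equiv 0,1$) yields exactly the four cases in the statement, with $R_0=d$ for $n=1$ and $R_1=dad$ for $n=2$ as the exceptional small values.

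There is no serious obstacle: the proposition is a routine consequence of Proposition~\ref{prop:P_n} once $\alpha_1$ is known on generators and once one works inside $\G$ rather than the free group on $\{a,b,c,d\}$. The only points requiring a little care are the bookkeeping of the index shift between $n$ and $m=n-1$ combined with residues mod $3$, and remembering to invoke the torsion relations $a^2=d^2=1$ when simplifying $(dad)^2$ and $a^3$.
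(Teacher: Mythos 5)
Your proposal is correct and follows essentially the same route as the paper's proof: both reduce via $\alpha_n(a)=\alpha_1(P_{n-1})$ and the recursion $P_{m+1}=P_mq_mP_m$ to the relation $\alpha_1(P_{m+1})=\alpha_1(P_m)\alpha_1(q_m)\alpha_1(P_m)$, evaluate $\alpha_1$ on generators, and close an induction on residues mod $3$ using $a^2=d^2=1$. The only cosmetic difference is your index shift $m=n-1$ and the notation $R_m$, versus the paper's direct use of $\alpha_{n+1}(a)=\alpha_n(a)\alpha_1(q_{n-1})\alpha_n(a)$.
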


\begin{proof}
Induction on $n$. For $n\in\{1,2,3\}$ we have
\[\alpha_1(a)=\sigma(a)|_0=(aca)|_0=d,\]
\[\alpha_2(a)=\sigma^2(a)|_0=(aca\cdot b\cdot aca)|_0=dad,\]
\[\alpha_3(a)=\sigma^3(a)|_0=(acabaca\cdot d\cdot acabaca)|_0=dad\cdot 1\cdot dad=1,\]
Now assume that the statement is true for some $n\geq 3$. Then we calculate, using equality~\eqref{eqn:alpha}:
\[\alpha_{n+1}(a)=\alpha_1(P_{n})=\alpha_1(P_{n-1}q_{n-1}P_{n-1})=\alpha_1(P_{n-1})\alpha_1(q_{n-1})\alpha_1(P_{n-1})=\alpha_n(a)\alpha_1(q_{n-1})\alpha_n(a).\]
Now, considering three cases for $n$ $\mathrm{mod}\ 3$, we use the inductive assumption to find the values of $\alpha_n(a)\in\{1,a\}$ and Proposition~\ref{prop:P_n} to find $q_{n-1}\in\{b,c,d\}$. Finally, evaluating $\alpha_1(b)=1$, $\alpha_1(c)=a$, $\alpha_1(d)=a$ and using the fact that both $a$ and $d$ are involutions completes the proof.
\end{proof}

Understanding of $\alpha_n(a)$ helps us to describe the portrait of $\theta(a)$. Namely, the portrait of $\theta(a)$ on $\widetilde T_{3}$ is shown in Figure~\ref{fig:theta_a}. The original binary rooted tree on which the group $\G$ acts is marked in the end of the right branch by an arc. As desired by construction, $\theta(a)$ acts as $a$ on that subtree.

\begin{figure}[h]
\begin{center}
\epsfig{file=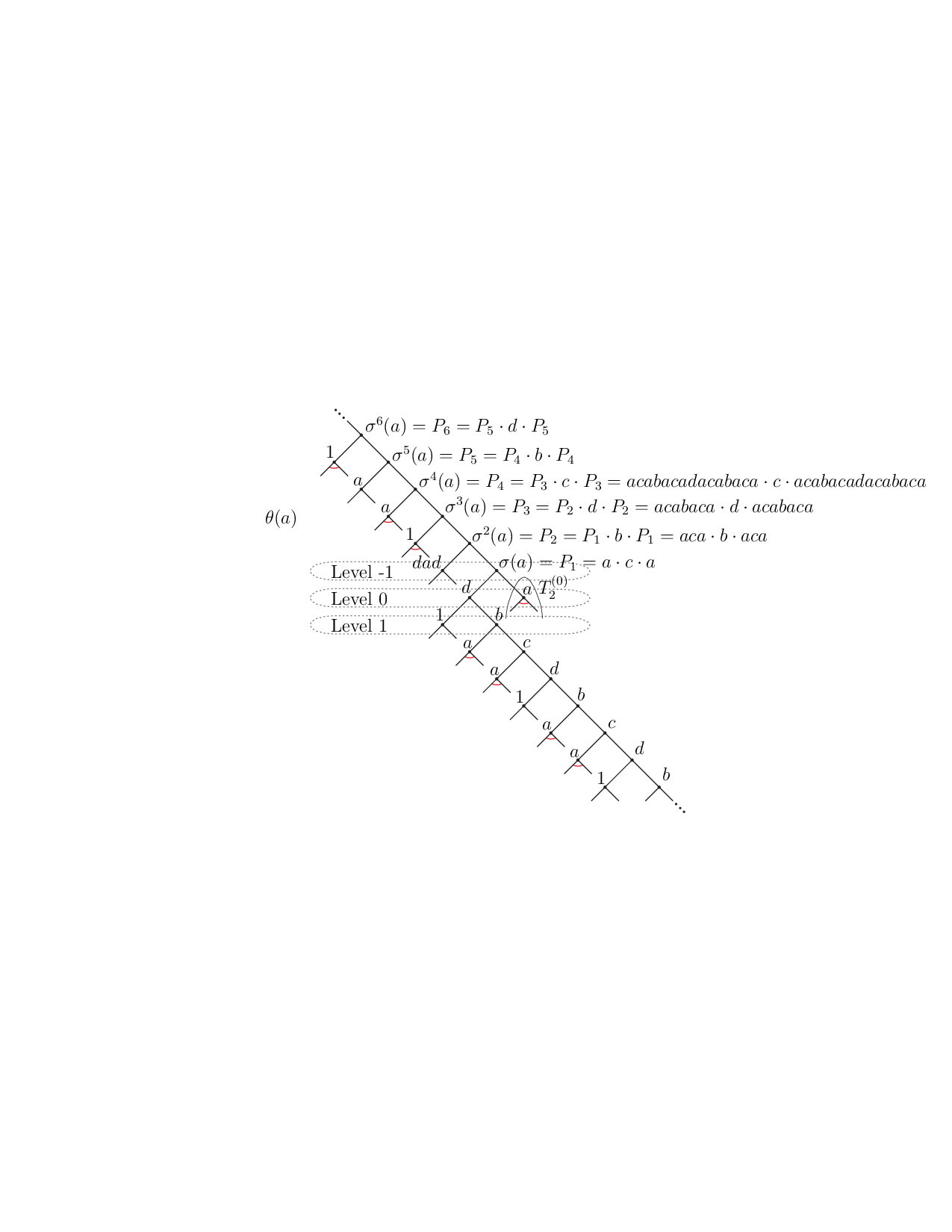}
\end{center}
\caption{Portrait of the $\theta(a)$ on $\widetilde T_{3}$\label{fig:theta_a}}
\end{figure}

\end{example}

\begin{example} (Finitely presented extension of $\IMG(z^2+i)$).\\
Another example of the branch type group for which Theorem~\ref{thmx:hnn} is applicable is the iterated monodromy group $\Gamma=\IMG(z^2+i)$ (detailed exposition of iterated monodromy groups can be found in~\cite{nekrash:self-similar,bartholdi_gn:fractal}). This group was studied by K.-U.~Bux and R.~Perez in~\cite{bux_p:iter_monodromy}, who proved that $\Gamma$ (as well as $\G$) has intermediate growth, and Z.~\v Sun\'ic and the authors studied in~\cite{grigorch_ss:img} its spectral properties. It is generated by a 4-state automaton acting on a binary tree shown in Figure~\ref{fig:z2i}. The corresponding wreath recursion defining $\Gamma$ is
\begin{equation}\label{eqn_defin_z2}
a=(1,1)\varepsilon,\quad
b=(a,c),\quad
c=(b,1),
\end{equation}
and $\Gamma$ has the following finite ascending $L$-presentation~\cite{grigorch_ss:img}:
\begin{multline}\label{eqn_pres}
\Gamma\cong\bigl\langle a,b,c\bigm| \sigma^n(a^2),\sigma^n\bigl((ac)^4\bigr),
\sigma^n([c,ab]^2),
\sigma^n([c,bab]^2),\\ \sigma^n([c,ababa]^2), \sigma^n([c,ababab]^2),
\sigma^n([c,bababab]^2), n\geq 0\bigr\rangle,
\end{multline}
where $\sigma$ is the substitution defined by
\[\sigma\colon
\begin{cases}
a\to b,\\
b\to c,\\
c\to aba.\\
\end{cases}
\]
It extends to a monomorphism $\sigma\colon \Gamma\to\St_{\Gamma}(1)$ which is a lifting for $\Gamma$: wreath recursion~\eqref{eqn_defin_z2} and the relation $aba=(c,a)$ make $\sigma$ a right inverse of $\pi_0$. This makes the presentation~\eqref{eqn_pres} liftable. Thus, by Theorem~\ref{thmx:hnn} the HNN extension $\widetilde\Gamma$ of $\Gamma$ via the homomorphism $\sigma$ is a finitely presented group with presentation
\begin{multline*}
\widetilde \Gamma=\langle a,b,c,t\mid a^2, (ac)^4, [c,ab]^2,
[c,bab]^2, [c,ababa]^2, [c,ababab]^2, [c,bababab]^2,\\
tat^{-1}=b, tbt^{-1}=c, tct^{-1}=aba\rangle,
\end{multline*}
that embeds into $\Aut(\widetilde T_3)_\omega$. The group $\widetilde\Gamma$ is another example of a finitely presented amenable but not elementary amenable group and it also has exponential growth (as well as $\fpG$ has).

\begin{figure}[h]
\begin{center}
\epsfig{file=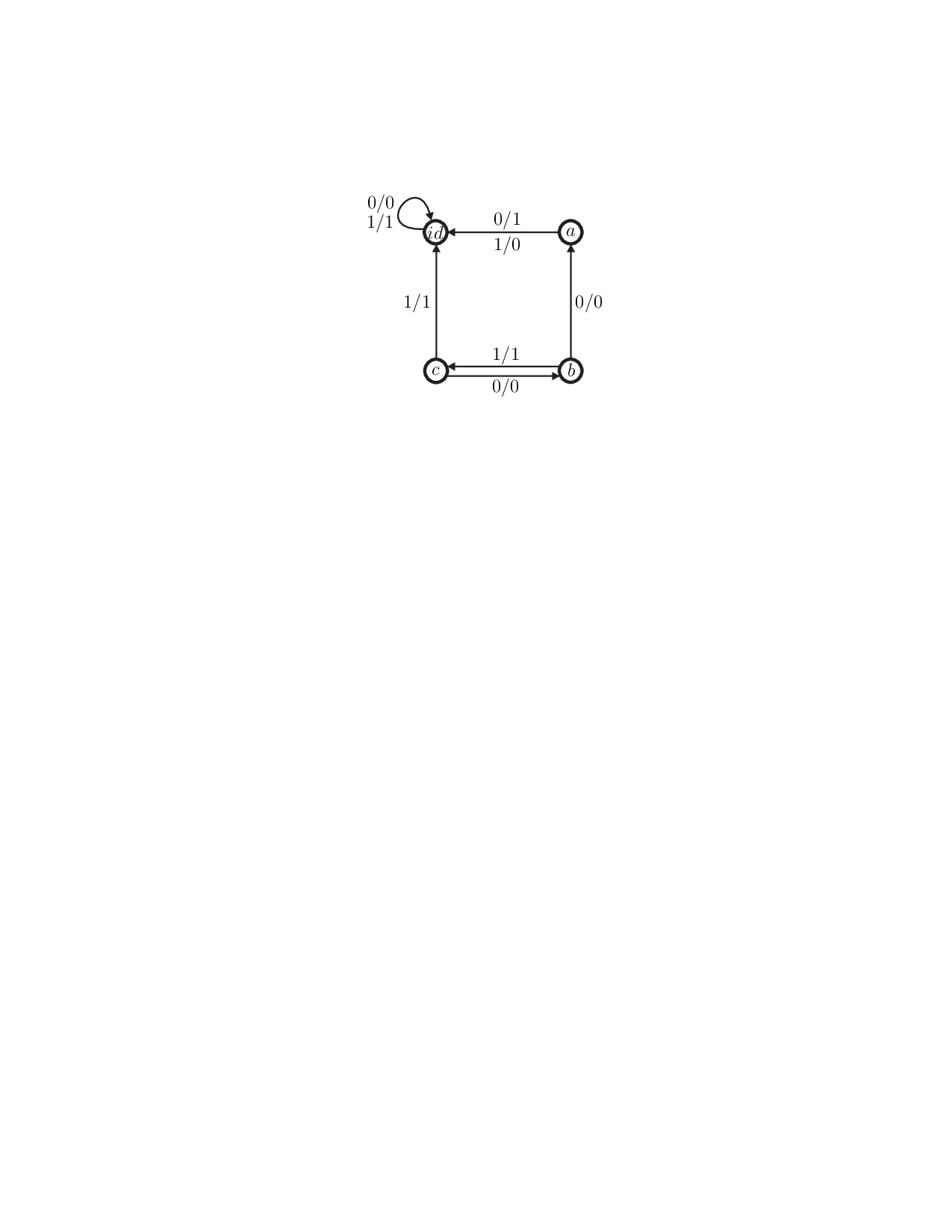,width=150pt}
\caption{Automaton generating the group $\Gamma=\IMG(z^2+i)$\label{fig:z2i}}
\end{center}
\end{figure}
\end{example}

\begin{example}(Finitely presented amenable but not subexponentially amenable group $\fpB$).
\label{ex:basilica}
Our final example in this section is the Basilica group $\mathcal B$ first studied in~\cite{grigorch_z:basilica,grigorch_z:basilica_sp}. It was the first example of a finitely generated amenable group that is not subexponentially amenable. The group $\mathcal B$ is generated by a 3-state automaton acting on a binary tree and defined by the following wreath recursion:
\begin{equation}\label{eqn_defin}
a=(b,1)\varepsilon,\quad
b=(a,1).\quad
\end{equation}
It was shown in~\cite[Proposition~2]{grigorch_z:basilica_sp} that $\mathcal B$ has the following presentation:
\begin{equation}\label{eqn_pres_basilica}
\mathcal B\cong\langle a,b\mid \sigma_{\mathcal B}^{i}\big([b,b^{a}]\big), i=0,1,\ldots\rangle,
\end{equation}
where $\sigma_{\mathcal B}$ is the substitution defined by
\begin{equation}
\label{eqn:sigma_B}
\sigma_{\mathcal B}\colon\left\{
\begin{array}{l}
a\mapsto b\\
b\mapsto a^2
\end{array}\right.
\end{equation}
By Proposition~10 in~\cite{grigorch_z:basilica} the map $\sigma_{\mathcal B}$ extends to a monomorphism $\sigma_{\mathcal B}\colon \mathcal B\to\St_{\mathcal B}(0)$ and since $b=(a,1), a^2=(b,b)$, this monomorphism is a lifting for $\mathcal B$. Thus, by Theorem~\ref{thmx:hnn} the HNN extension $\widetilde{\mathcal B}$ of $\mathcal B$ via the homomorphism $\sigma$ embeds into $\D(\Q_2)$. The group $\widetilde{\mathcal B}$ is an example of a finitely presented amenable but not subexponentially amenable group of exponential growth.

%\begin{equation}\label{eqn_pres_basilica}
%\mathcal B\cong\langle a,b\mid \sigma^{\varepsilon}(\kappa^m\left([b,b^a]\right), m\geq 0, \varepsilon=0,1\rangle,
%\end{equation}
%where $\sigma$ and $\kappa$ are the substitution defined by
%\[\sigma\colon\left\{
%\begin{array}{l}
%a\mapsto b\\
%b\mapsto a^2
%\end{array}\right.\qquad\qquad
%\kappa\colon\left\{
%\begin{array}{l}
%a\mapsto a\\
%b\mapsto b^{a^2}b
%\end{array}\right.
%\]
%By Proposition~10 in~\cite{grigorch_z:basilica} the map $\sigma$ extends to a monomorphism $\sigma\colon \mathcal B\to\St_{\mathcal B}(0)$ and since $b=(a,1), a^2=(b,b)$, this monomorphism is a lifting for $\mathcal B$. Thus, by Theorem~\ref{thmx:hnn} the HNN extension $\widetilde{\mathcal B}$ of $\mathcal B$ via the homomorphism $\sigma$ embeds into $\D(\Q_2)$. The group $\widetilde{\mathcal B}$ is an example of a finitely presented amenable but not subexponentially amenable group that has exponential growth. Note that $\kappa$ does not extend to an endomorphism of $G$ as pointed out in~\cite{grigorch_z:basilica}.
\end{example}

\subsection{Transitivity of the scale group coming from liftable groups}
\label{ssec:grig_closure}

We now study the closures of $\theta(\fpG)$ and $\theta(\widetilde{\mathcal B})$ from Examples~\ref{ex:grigorchuk} and~\ref{ex:basilica} in $\Aut(\widetilde T_3)_\omega$. The next result completes the proof of Theorem~\ref{thmx:grig} from the Introduction. In this section $\sigma$ and $\sigma_{\mathcal B}$ denote the substitutions defined in~\eqref{eqn:sigma_grigorchuk} and~\eqref{eqn:sigma_B}, respectively.

%\begin{proposition}
%The closure $\overline{\theta(\fpG)}$ of the finitely presented Grigorchuk group in $\Aut(\widetilde T_{3})_{\omega}$ is a scale group.
%\end{proposition}
%\begin{proof}
%The only condition that needs to be checked is that the group acts vertex transitively. It follows from the fact that Grigorchuk group acts transitively on the levels of $T^{(0)}$ and any two vertices of $\widetilde T_{3}$ can be moved by powers of $\theta(t)$ to two vertices of the same level of $T^{(0)}$.
%\end{proof}

\begin{theorem}
\label{thm:grig_2_transitivity}
The groups $\overline{\theta(\fpG)}$ and $\overline{\theta(\widetilde{\B})}$ are scale groups that act 2-transitively on the punctured boundary $\partial_{\omega}\widetilde T_{3}$ of $\widetilde T_{3}$.
\end{theorem}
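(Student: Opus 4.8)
The plan is to derive $2$-transitivity by combining Theorem~\ref{thmx:hnn} with the branch structure of $\G$ and $\B$. Write $G$ for $\G$ or $\B$ and $W=\overline{\theta(\widetilde G)}$ for the corresponding closure in $\Aut(\widetilde T_3)_\omega$. By Theorem~\ref{thmx:hnn}(ii), $W$ fixes the end $\omega$, acts vertex-transitively on $\widetilde T_3$, and is a scale group; in particular $W$ is transitive on $\partial_\omega\widetilde T_3$ (as recalled above, every scale group is, by~\cite{willis:scale_groups}). Thus $2$-transitivity is equivalent to the stabiliser in $W$ of one point of $\partial_\omega\widetilde T_3$ being transitive on the complement. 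Given distinct $\xi,\eta\in\partial_\omega\widetilde T_3$, their geodesics from $\omega$ agree up to a confluent vertex $v$ and then enter two distinct children of $v$; using vertex-transitivity I move $v$ to $\rootv$, so that $\xi$ and $\eta$ become a pair $(p,q)$ of ends of the rooted subtree $T_\rootv\cong T_2$ with distinct first letters. By Theorem~\ref{thmx:hnn}(iv), $\pi_\rootv$ maps $\St_W(\rootv)$ onto $\overline G$ and identifies the action of $\St_W(\rootv)$ on $T_\rootv$ with the action of $\overline G$ on $T_2$; so the whole theorem reduces to the statement that, for $G=\G$ and $G=\B$, the group $\overline G$ acts transitively on the set of pairs $(p,q)\in\partial T_2\times\partial T_2$ with $p_1\neq q_1$. (Granting this, each such pair can be moved to the fixed pair $(0^\infty,10^\infty)$, and composing the two resulting group elements handles an arbitrary pair of pairs.)

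It remains to prove this last transitivity statement from the self-similar data. An element of $\overline G$ that swaps the two first-level subtrees reduces the case $p_1\neq q_1$ to $p_1=0$, $q_1=1$, so it suffices to show that $D:=\psi\bigl(\St_{\overline G}(X^1)\bigr)\le\overline G\times\overline G$ --- the image of the first-level stabiliser under the coordinate decomposition $\psi$ --- acts transitively on $\partial T_2\times\partial T_2$ for the product action. Here $D$ surjects onto each factor (both $\G$ and $\B$ are self-replicating) and contains the two coordinate subgroups $\overline{\pi_0(\mathrm{Rist}_G(0))}\times\{1\}$ and $\{1\}\times\overline{\pi_1(\mathrm{Rist}_G(1))}$, since a tree automorphism supported in one of the first-level subtrees lies in $\St_{\overline G}(X^1)$ with trivial section on the other. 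Hence $D$ is product-transitive --- first adjust the first coordinate using $D\twoheadrightarrow\overline G$, then the second using $\{1\}\times\overline{\pi_1(\mathrm{Rist}_G(1))}$ --- provided the sections of the rigid vertex stabilisers act transitively on $\partial T_2$, i.e.\ provided $\overline G$ is ``branching enough'' below $\rootv$.

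The main obstacle is exactly this last combinatorial input, which is where the specific structure of $\G$ and $\B$ is used and which is verified from the wreath recursions. For $\G$, a regular branch group over its branching subgroup $K$, one combines the explicit decomposition of $\St_\G(X^1)$ read off from~\eqref{eqn:wreath_grigorchuk} with the inclusion $K\times K\le\G$ to produce, at each level of $T_2$, elements of the rigid stabilisers realising the permutations needed for level-transitivity; equivalently one checks directly that $\overline\G$ is transitive on boundary pairs with a prescribed first bifurcation level, a property inherited from the branch structure. For $\B$, which is only weakly branch, the same scheme works once one also invokes that $\overline\B$ is self-replicating and level-transitive (so that the lack of finite-index rigid stabilisers does no harm); here the element $b^{a}=(1,a^{b})$ coming from~\eqref{eqn_defin} and its $\overline\B$-conjugates provide the required room. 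Since a closed level-transitive subgroup of $\Aut(T_2)$ is transitive on $\partial T_2$, the transitivity statement --- and with it Theorem~\ref{thm:grig_2_transitivity} --- follows in both cases. This is also the step at which the argument would break for a general liftable group whose closure sits too sparsely below $\rootv$, which is why the result is stated only for $\fpG$ and $\fpB$.
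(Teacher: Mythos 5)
Your reduction is sound and is essentially the same one the paper makes: normalize so that the two geodesics bifurcate exactly at $\rootv$, use Theorem~\ref{thmx:hnn} to identify the action of $\St_W(\rootv)$ on $T_\rootv$ with that of $\overline{G}$ on $T_2$, and reduce everything to the claim that $\psi\bigl(\St_{\overline G}(X^1)\bigr)$ acts transitively on $\partial T_2\times\partial T_2$. (Working with the closure directly, rather than via the compactness/subsequence argument the paper uses, is fine.) The genuine gap is in how you propose to prove that claim. Your mechanism requires $\overline{\pi_1(\mathrm{Rist}_G(1))}$ to be transitive on $\partial T_2$, and for $G=\G$ this is false: the rigid stabilizers of the first-level vertices of $\G$ are $\langle b\rangle^{\G}$ (so that $\mathrm{Rist}_\G(\text{level }1)$ maps onto $\langle b\rangle^{\G}\times\langle b\rangle^{\G}$ under $\psi$), and $\langle b\rangle^{\G}\le\Stg(1)$ because $b$ fixes the first level and $\Stg(1)$ is normal. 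Hence the closure of this projection preserves the two halves of $\partial T_2$ and cannot move a point starting with $0$ to one starting with $1$; no element of a rigid vertex stabilizer can realize the top-level permutation you need in the other subtree, so the step ``then adjust the second coordinate using $\{1\}\times\overline{\pi_1(\mathrm{Rist}_G(1))}$'' breaks down. Your fallback sentence (``one checks directly that $\overline\G$ is transitive on boundary pairs with a prescribed first bifurcation level, a property inherited from the branch structure'') simply restates the statement to be proved; it is not a formal consequence of $\G$ being regular branch over $K$, and it is exactly the point where specific work is required.

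What is actually needed, and what the paper supplies, is a statement about the \emph{full} vertex stabilizer rather than the rigid one: for $\G$, the stabilizer $\St_{\G}(1^l)$ acts transitively on $0X^{l-1}$ for every $l$ (Lemma~9.10 of~\cite{bartholdi_g:parabolic}); elements of $\St_{\G}(1^l)$ are allowed to act nontrivially on the $1$-subtree as long as they fix the vertex $1^l$, which is precisely the extra freedom your rigid-stabilizer approach forfeits. For $\B$ the paper proves the analogous transitivity of $\St_{\B}(1^l)$ on $0X^{l-1}$ by induction using $\B'\succ\B'\times\B'$ together with the elements $[a,b]$ and $b=(a,1)$. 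Your remark that $b^a=(1,a^b)$ does show that for $\B$ the rigid stabilizer of the vertex $1$ projects onto a subgroup containing $\langle a\rangle^{\B}$, so for the Basilica group your mechanism is much closer to viable; but since the theorem also covers $\fpG$, the argument as written has a hole exactly at its main combinatorial input, and repairing it leads you back to a level-by-level statement of the Lemma~9.10 type.
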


The proof of this theorem is based on the $2$-transitivity property of the actions of $\G$ on the levels of $T_2$ that was discovered in~\cite{bartholdi_g:parabolic} (see also Appendix by Bekka, Grigorchuk, de la Harpe in~\cite{bekka_h:irreducibility_of_unitary03}), and a similar result for the Basilica group $\B$.

\begin{proof}
Since $\G$ and $\B$ satisfy the conditions of Theorem~\ref{thmx:hnn} and act transitively on the first level, we immediately obtain $\theta(\fpG)$ and $\theta(\fpB)$ act transitively on the set of vertices of $T_3$ and that their closures $\overline{\theta(\fpG)}$ and $\overline{\theta(\widetilde{\B})}$ are scale groups acting transitively on the punctured boundary $\partial_{\omega}\widetilde T_{3}$ of $\widetilde T_{3}$ by Proposition~2.4 in~\cite{willis:scale_groups}.

We start from proving $2$-transitivity of the action of $\overline{\theta(\fpG)}$ on $\partial_{\omega}\widetilde T_{3}$. Suppose $(\xi_1,\xi_2)$ and $(\xi'_1,\xi'_2)$ are two pairs of distinct points of $\partial_{\omega}\widetilde T_{3}$. Let $n$ (resp., $n'$) be the largest level at which $\xi_1,\xi_2$ (resp., $\xi'_1,\xi'_2$) agree. Then
elements $\theta(t)^{-n}$ and $\theta(t)^{-n'}$ of $\theta(\fpG)$ will move them to pairs $(\theta(t)^{-n}(\xi_1),\theta(t)^{-n}(\xi_2))$ and $(\theta(t)^{-n'}(\xi'_1),\theta(t)^{-n'}(\xi'_2))$ such that all of the 4 involved elements of $\partial_{\omega}\widetilde T_{3}$ pass through the root $\rootv$ of $T_2^{(0)}$ and such that $\theta(t)^{-n}(\xi_1)$ and $\theta(t)^{-n}(\xi_2)$ (resp., $\theta(t)^{-n'}(\xi'_1)$ and $\theta(t)^{-n'}(\xi'_2)$) do not agree on level 1.

Since $\G$ (and thus $\theta(\G)<\theta(\fpG)$) acts transitively on the levels of $T^{(0)}_2$, for each level $l>0$ there are elements $g_l,g'_l\in\G$ such that
\begin{equation}
\label{eqn:gl}
[\theta(g_l)\bigl(\theta(t)^{-n}(\xi_1)\bigr)]_l=[1^{-\infty}.1^{\infty}]_l
\end{equation}
and
\begin{equation}
\label{eqn:glprime}
[\theta(g'_l)\bigl(\theta(t)^{-n'}(\xi'_1)\bigr)]_l=[1^{-\infty}.1^{\infty}]_l
\end{equation}

Automorphisms $\theta(g_l)$ and $\theta(g_l')$ preserve levels of $\widetilde T_{3}$. Since $\theta(t)^{-n}(\xi_1)$ and $\theta(t)^{-n}(\xi_2)$ (resp., $\theta(t)^{-n'}(\xi'_1)$ and $\theta(t)^{-n'}(\xi'_2)$) do not agree on level 1, their images under $\theta(g_l)$ and $\theta(g_l')$ also must not agree on level 1. Therefore, by~\eqref{eqn:gl} and~\eqref{eqn:glprime} we have
\[[\theta(g_l)\bigl(\theta(t)^{-n}(\xi_2)\bigr)]_l=[1^{-\infty}.0w]_l\]
and
\[[\theta(g'_l)\bigl(\theta(t)^{-n'}(\xi'_2)\bigr)]_l=[1^{-\infty}.0w']_l\]
for some infinite words $w,w'$.

Now we use the second part of Lemma 9.10 in~\cite{bartholdi_g:parabolic}, which states, in particular, that the stabilizer $P_l$ of the vertex $1^l$ in $\G$ has $0X^{l-1}$ as one of the orbits. It implies that there is $p_l\in P_l<\G$ such that
\[[\theta(p_l)\left(\theta(g_l)\bigl(\theta(t)^{-n}(\xi_2)\bigr)\right)]_l=[\theta(g'_l)\bigl(\theta(t)^{-n'}(\xi'_2)\bigr)]_l.\]

We can rewrite the last equality as
\[[\theta((g'_l)^{-1}p_lg_l)\bigl(\theta(t)^{-n}(\xi_2)\bigr)]_l=[\bigl(\theta(t)^{-n'}(\xi'_2)\bigr)]_l.\]
Since the sequence of elements $\left(\theta((g'_l)^{-1}p_lg_l)\right)_{l\geq1}$ belongs to the compact stabilizer of the root $\rootv$ of $T^{(0)}_2$ in $\Aut_0(\widetilde T_{3})_{\omega}$, it must contain a convergent subsequence $\left(\theta((g'_{l_j})^{-1}p_{l_j}g_{l_j})\right)_{j\geq1}$ such that
\[\lim_{j\to\infty}\theta((g'_{l_j})^{-1}p_{l_j}g_{l_j})=g\in\overline{\theta(\fpG)}.\]
By construction we have
\[g(\theta(t)^{-n}(\xi_1))=\theta(t)^{-n'}(\xi'_1)\]
and
\[g(\theta(t)^{-n}(\xi_2))=\theta(t)^{-n'}(\xi'_2).\]
The last two equalities imply that the element $\theta(t)^{n'}g\theta(t)^{-n}\in\overline{\theta(\fpG)}$ moves $(\xi_1,\xi_2)$ to $(\xi'_1,\xi'_2)$.

The proof that $\overline{\theta(\widetilde{\B})}$ acts 2-transitively on $\partial_{\omega}\widetilde T_{3}$ is similar to the above proof for $\overline{\theta(\fpG)}$. We only need to establish the analog of Lemma~9.10 from~\cite{bartholdi_g:parabolic} for Basilica group. Namely, we will prove that $\St_{\B}(1^l)$ acts transitively on $0X^{l-1}$ for each $l\geq 1$. It is proved in~\cite{grigorch_z:basilica} that the group $\B$ is weakly regular branch over its commutator subgroup $\B'$, i.e.,
\begin{equation}
\label{eqn:basilica_branch}
\B'\succ \B'\times\B'.
\end{equation}
We show by induction on $l$ that $\B'$ acts transitively on $0X^{l-1}$ and $1X^{l-1}$. Induction base is trivial for $l=1$. Assume that the statement is true for a fixed value of $l$. Then equation~\eqref{eqn:basilica_branch} immediately implies that $\B'$ acts transitively on $00X^{l-1}$, $01X^{l-1}$, $10X^{l-1}$, and $11X^{l-1}$. Since $[a,b]=(a,b^{-1}a^{-1}b)\in\B'$ is such that both $a$ and $b^{-1}a^{-1}b$ act transitively on the first level, we obtain that $\B'$ acts transitively on $0X^l$ and $1X^l$.

To complete the proof we observe that since $\B'\times 1<\St_{\B}(1^l)$ for each $l$, the above argument implies that $\St_{\B}(1^l)$ acts transitively on $00X^{l-2}$ and $01X^{l-2}$. Finally, since $b=(a,1)\in\St_{\B}(1^l)$ is such that $b|_0=a$ acts transitively on the first level, we deduce that $\St_{\B}(1^l)$ acts transitively on $0X^{l-1}$ for all $l\geq 1$.
\end{proof}

\subsection{GGS-groups}
\label{ssec:ggs}
Important situation where one can prove liftability is when the projections of $\sigma(G)$ on the vertices of the first level, other than the distinguished vertex $i\in X$, happen to be in a finite subgroup of $G$. This is somewhat intermediate between the examples of groups acting essentially freely on the boundary of the tree (where the mentioned subgroup is trivial) and the examples of groups with finite $L$-presentation (where one needs to find the corresponding $L$-presentation). This situation is different from the case of groups acting essentially freely on the boundary of the tree that was discussed above. While the class of such groups is larger, it requires more work to check that the substitution $\sigma$ defined on the generators of $G$ extends to an endomorphism of $G$. In this section we demonstrate this situation on a large subclass of GGS-groups~\cite{bar_gs:branch,fernandez-alcober_z:GGS-groups_order_of_congruence_quotients14}. The class of GGS-groups (Grigorchuk-Gupta-Sidki groups, the term coined to G.~Baumslag) includes the group $\G$ (a second group from~\cite{grigorch:burnside}) and all Gupta-Sidki $p$-groups constructed in~\cite{gupta_s:burnside}. It was shown in~\cite{fernandez-alcober_z:GGS-groups_order_of_congruence_quotients14} that all GGS $p$-groups are regular branch, so they act totally non-freely on $\partial T_p$, which makes them very different from groups acting essentially freely on the boundaries of trees considered above.

For a fixed $d\geq 2$ every GGS-group is defined by a non-zero vector $\be=(e_1,e_2,\ldots,e_{d-1})\in(\Z/d\Z)^{d-1}$ as the group acting on the regular rooted tree $T_d=T(X)$ for $X=\Z/d\Z$ with the following wreath recursion:
\[
\begin{array}{rcl}
a&=&(1,1,\ldots,1,1)\varepsilon,\\
b&=&(a^{e_1},a^{e_2},\ldots,a^{e_{d-1}},b),
\end{array}
\]
where $\varepsilon=(0,1,\ldots,d-1)$ denotes a long cycle in $\Sym(X)$. We will denote the corresponding group as $G_{\be}=\langle a,b\rangle$.

\begin{theorem}
\label{thm:ggs}
Let $p\geq 3$ be a prime and $G_{\be}$ be a GGS-group defined by a non-symmetric vector $\be=(e_1,e_2,\ldots,e_{p-1})\in (\Z/p\Z)^{p-1}$ with the property that there is $j\in\{1,2,\ldots,p-1\}$ such that $e_j\neq 0$ and $e_{p-j}=0$. Then $G_{\be}$ is a liftable group.
\end{theorem}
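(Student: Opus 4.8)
The plan is to take the distinguished vertex to be $i=p-1$, the last coordinate, which is the one carrying the ``recursive'' section of $b$. Write $X=\Z/p\Z$ with long cycle $\varepsilon\colon k\mapsto k+1$, so that $a=(1,\dots,1)\varepsilon$ and $b=(a^{e_1},\dots,a^{e_{p-1}},b)\in\St_{G_{\be}}(X^1)$, with $b|_m=a^{e_{m+1}}$ for $0\le m\le p-2$, $b|_{p-1}=b$, and $(b^{a^k})|_v=b|_{v-k}$; recall also the standard GGS facts that $b^p=1$, that $G_{\be}/\St_{G_{\be}}(X^1)\cong\Z/p\Z$ is detected by the exponent sum in $a$, that the first-level action is regular (so $\St_{G_{\be}}(p-1)=\St_{G_{\be}}(X^1)$), and that $b\neq 1$ since $\be\neq0$. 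I would then define the candidate lifting on generators by
\[
\sigma(b)=b,\qquad \sigma(a)=\bigl(b^{a^{p-j}}\bigr)^{\ell},
\]
where $\ell\in\{1,\dots,p-1\}$ is chosen with $e_j\ell\equiv1\pmod p$ (possible since $e_j\neq0$ and $p$ is prime). Both elements lie in $\St_{G_{\be}}(X^1)=\St_{G_{\be}}(p-1)$, and the section formulas give $\pi_{p-1}(\sigma(b))=b|_{p-1}=b$ and $\pi_{p-1}(\sigma(a))=(b|_{j-1})^{\ell}=(a^{e_j})^{\ell}=a$. Hence, as soon as $\sigma$ is known to extend to an endomorphism of $G_{\be}$, the composite $\pi_{p-1}\circ\sigma$ agrees with $\mathrm{id}_{G_{\be}}$ on the generating set, so it equals the identity, $\sigma$ is a right inverse of $\pi_{p-1}$, and by Definition~\ref{def:liftable} this exhibits $G_{\be}$ as liftable.

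The core of the argument is therefore to check that $a\mapsto\sigma(a)$, $b\mapsto\sigma(b)$ respects every relation of $G_{\be}$, and here I would use a ``localization at the first level'' trick rather than any presentation of $G_{\be}$ (GGS groups are typically infinitely presented). Since the section map $\psi=(\pi_0,\dots,\pi_{p-1})\colon\St_{G_{\be}}(X^1)\hookrightarrow G_{\be}^{\,p}$ is injective and $\sigma(a),\sigma(b)\in\St_{G_{\be}}(X^1)$, a word $w$ is trivial on $\sigma(a),\sigma(b)$ iff $w(\sigma_v(a),\sigma_v(b))=1$ for every $v$, where $\sigma_v(a):=\pi_v(\sigma(a))$, $\sigma_v(b):=\pi_v(\sigma(b))$ and each $\pi_v$ is a homomorphism on $\St_{G_{\be}}(X^1)$. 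So it suffices to show that for each $v\in X$ the assignment $a\mapsto\sigma_v(a)$, $b\mapsto\sigma_v(b)$ extends to a homomorphism $G_{\be}\to G_{\be}$. A short computation splits this into exactly three cases, each governed by the hypotheses on $\be$: for $v=p-1$ one has $\sigma_v=\mathrm{id}$; for the unique vertex $v_0:=(p-1-j)\bmod p$ (which is $\neq p-1$ because $j\not\equiv0$) one finds $\sigma_{v_0}(a)=b^{\ell}$ and $\sigma_{v_0}(b)=b|_{v_0}=a^{e_{p-j}}=1$ — this is precisely where $e_{p-j}=0$ enters — so $\sigma_{v_0}$ factors as $G_{\be}\twoheadrightarrow G_{\be}/\St_{G_{\be}}(X^1)\cong\Z/p\Z\xrightarrow{\ 1\mapsto b^{\ell}\ }\langle b\rangle$ and is a homomorphism because $b^p=1$; and for every remaining $v$ both $\sigma_v(a)$ and $\sigma_v(b)$ turn out to be powers of $a$, so $\sigma_v$ has image in the abelian group $\langle a\rangle\cong\Z/p\Z$ and extends to a homomorphism because $G_{\be}^{\mathrm{ab}}\cong(\Z/p\Z)^2$ is freely generated by the images of $a$ and $b$ — a standard fact for GGS $p$-groups, see~\cite{bar_gs:branch,fernandez-alcober_z:GGS-groups_order_of_congruence_quotients14}. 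Assembling the three cases shows $\sigma$ is a well-defined endomorphism with $\pi_{p-1}\circ\sigma=\mathrm{id}$, completing the proof that $G_{\be}$ is liftable.

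I expect the hard part to be exactly the bookkeeping in this three-case analysis: one must verify that the single coordinate $v_0$ at which a copy of $b$ reappears inside $\sigma_{v_0}(a)$ is precisely the coordinate at which $e_{p-j}=0$ forces $\sigma_{v_0}(b)=1$, and that at every other coordinate the resulting map factors through the abelianization. In this sense $e_j\neq0$ is what lets us realize $a$ as a section at $p-1$, while $e_{p-j}=0$ is what keeps the construction globally consistent. The only non-routine external input is $G_{\be}^{\mathrm{ab}}\cong(\Z/p\Z)^2$ (equivalently, that the exponent sum in $b$ is a well-defined homomorphism $G_{\be}\to\Z/p\Z$), which I would quote rather than reprove; and the stated non-symmetry of $\be$ is automatic here, since $e_j\neq e_{p-j}$, so it need not be invoked separately.
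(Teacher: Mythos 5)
Your proposal is correct and is essentially the paper's own argument in mirror image: the paper lifts at the vertex $0$ with $\sigma(b)=a^{-1}ba$ and $\sigma(a)=(a^{j-1}ba^{1-j})^{e_j^{-1}}$, while you lift at $p-1$ with $\sigma(b)=b$ and $\sigma(a)=(b^{a^{p-j}})^{e_j^{-1}}$, and both verifications check relators coordinate-by-coordinate on the first level, using $e_{p-j}=0$ at the unique coordinate where $b$ reappears and $G_{\be}^{\mathrm{ab}}\cong(\Z/p\Z)^2$ together with $a^p=b^p=1$ at the remaining coordinates. Your packaging of the coordinate checks as maps factoring through $G_{\be}/\St_{G_{\be}}(X^1)$ or the abelianization is just a cleaner phrasing of the paper's exponent-sum argument, so there is no substantive difference.
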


\begin{proof}
We first observe that both generators $a$ and $b$ of $G_{\be}$ have order $p$. Moreover, it is shown in~\cite{fernandez-alcober_z:GGS-groups_order_of_congruence_quotients14} (Theorem~2.1) that the abelianization $G_{\be}/G_{\be}'$ is isomorphic to $(\Z/p\Z)^2$.

Let $f=e_{j}^{-1}$, so that $(a^{e_j})^f=a$. Then we have
\[
\begin{array}{rcllllllllll}
a^{-1}ba&=&(b,&a^{e_1},&\ldots,&a^{e_{p-j-1}},&1,&a^{e_{p-j+1}},&\ldots,&a^{e_{p-2}},&a^{e_{p-1}}&),\\
(a^{j-1}b{a^{1-j}})^f&=&(a,&a^{e_{j+1}f},&\ldots,&a^{e_{p-1}f},&b^{f},&a^{e_{1}f},&\ldots,&a^{e_{j-2}f},&a^{e_{j-1}f}&),\\[1mm]
\text{positions}&&\phantom{(}0&1&\ldots&p-j-1&p-j&p-j+1&\ldots&p-2&p-1&\\
\end{array}
\]
where the last row indicates the positions of corresponding coordinates of the vectors in the first two rows. Define a substitution $\sigma$ as
\begin{equation}
\label{eqn:sigma_GGS}
\sigma\colon\left\{
\begin{array}{l}
a\mapsto (a^{j-1}b{a^{1-j}})^f\\
b\mapsto a^{-1}ba\\
\end{array}\right.
\end{equation}
Then $\sigma$ extends to an injective endomorphism of $G_{\be}$. Indeed, suppose $w=w(a,b)$ is a word in the free group $F(a,b)$ that is a relator in $G_{\be}$. Then
\[\pi_i\circ\sigma(w)=\pi_i\bigl(w((a^{j-1}b{a^{1-j}})^f,a^{-1}ba)\bigr)=\left\{
\begin{array}{ll}
w(a,b)=_{G_{\be}}1,&\text{if }\ i=0,\\
w(1,b^f),&\text{if }\ i=p-j+1,\\
w(a^{e_i},a^{j+i}),&\text{otherwise.}
\end{array}
\right.\]
Since $G_{\be}/G_{\be}'\cong(\Z/p\Z)^2$ and $w=_{G_{\be}}1$, we must have that the total exponents of $a$ and $b$ in $w$ are both equal to 0 modulo $p$. This implies that both $w(1,b^f)$ and $w(a^{e_i},a^{j+i})$ are also trivial in $G_{\be}$ since both $a$ and $b$ have order $p$. Here we use the fact that $\pi_i\circ\sigma(G)$ is a subgroup of either $\langle b\rangle$ or $\langle a\rangle$ for $i\neq 0$, which are both finite subgroups of $G$, which makes the verification that $\sigma$ is an endomorphism possible.

Finally, by construction we have $\pi_0\circ\sigma$, so $\sigma$ is a lifting of $G_{\be}$ and thus $G_{\be}$ is liftable. In particular, Theorem~\ref{thmx:hnn} applies for $G_{\be}$.
\end{proof}

\begin{corollary}
Gupta-Sidki $p$-groups $GS_{p}$ are liftable for all prime $p\geq 5$.
\end{corollary}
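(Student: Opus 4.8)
The plan is to recognize each Gupta--Sidki $p$-group as a particular GGS-group $G_{\be}$ and then to check that its defining vector verifies the hypotheses of Theorem~\ref{thm:ggs}; the corollary follows at once.

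First I would recall from~\cite{gupta_s:burnside} that, for an odd prime $p$, the group $GS_p$ acts on the rooted $p$-ary tree $T_p=T(X)$ with $X=\Z/p\Z$ and is generated by the long cycle $a=\varepsilon$ on the first level together with $b$ defined recursively by $b=(a,a^{-1},1,\ldots,1,b)$. Matching this against the wreath recursion of Subsection~\ref{ssec:ggs}, this is precisely $G_{\be}$ for the vector $\be=(e_1,\ldots,e_{p-1})$ with $e_1=1$, $e_2=-1$, and $e_i=0$ for $3\le i\le p-1$. (Changing the convention for which first-level vertex carries $b$ only cyclically permutes the coordinates, so this description is convention-independent.)

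Then I would apply Theorem~\ref{thm:ggs} with $j=1$: since $p\ge 5$ gives $p-1\ge 4$, the index $p-1$ lies in $\{3,\ldots,p-1\}$, whence $e_{p-1}=0$, while $e_1=1\neq 0$; thus $\be$ is non-symmetric and has the required index $j=1$. Theorem~\ref{thm:ggs} then yields that $GS_p=G_{\be}$ is liftable, with an explicit lifting given by~\eqref{eqn:sigma_GGS} (with $j=1$ and $f=e_1^{-1}=1$), so that in particular Theorem~\ref{thmx:hnn} applies to $GS_p$.

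I do not anticipate any genuine obstacle here --- the argument is a bookkeeping check of the defining data of $GS_p$ against the GGS template --- but it is worth noting why $p=3$ is excluded: the Gupta--Sidki vector $\be=(1,-1)$ has no index $j$ with $e_j\neq 0$ and $e_{p-j}=0$ (both $e_1$ and $e_2$ are nonzero), so Theorem~\ref{thm:ggs} is silent in that case, and the liftability of $GS_3$ must instead be deduced from its finite $L$-presentation, as mentioned in the introduction.
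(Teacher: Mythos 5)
Your proof is correct and follows exactly the paper's argument: identify $GS_p$ with the GGS-group given by the vector $(1,-1,0,\ldots,0)\in(\Z/p\Z)^{p-1}$ and apply Theorem~\ref{thm:ggs} with $j=1$, where $p\geq 5$ guarantees $e_{p-1}=0$. One small caveat on your closing aside: the paper states that liftability of $GS_3$ is in fact \emph{unknown} (it is tied to the open question of whether $GS_3$ admits a finite ascending $L$-presentation), so it cannot be ``deduced from its finite $L$-presentation''; this does not affect the corollary, which only concerns $p\geq 5$.
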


\begin{proof}
The Gupta-Sidki $p$-group constructed in~\cite{gupta_s:burnside} is a GGS-groups defined by the following vector $(1,-1,0,\ldots,0)\in(\Z/p\Z)^{p-1}$, $p\geq 3$. This vector satisfies the condition from Theorem~\ref{thm:ggs} for $j=1$ for $p\geq 5$.
\end{proof}

It is not known whether Gupta-Sidki 3-group $G_3$ is liftable. This is related to the question~4.10 from~\cite{bartholdi:epimorphic03} about the existence of a finite ascending $L$-presentation for $GS_3$.

\subsection{The group $\GE$}

Now we consider the group $\GE$ studied by A.~Erschler in~\cite{erschler:boundary}, who proved that it has intermediate growth with the growth function growing faster than $\exp(n^\alpha)$ for each $0<\alpha<1$ and also provided quite sharp upper and lower bounds on the growth function. This group belongs to the family $\G_{\omega}, \omega\in\{0,1,2\}^\infty$ constructed by the first author in~\cite{grigorch:degrees}. Observe that the group $\G$ from Example~\ref{ex:grigorchuk} is isomorphic to $\G_{(012)^\infty}$. The group $\GE$ is determined by a periodic sequence $(01)^\infty$, acts on the binary rooted tree, and is generated by the following wreath recursion:
\begin{equation}
\label{eqn:GE}
\begin{array}{rcl}
a&=&(1,1)\varepsilon\\
b&=&(a,c)\\
c&=&(1,b)\\
d&=&(a,d)\\
\end{array},
\end{equation}

Similarly to $\G$ and the GGS-groups studied in Subsection~\ref{ssec:ggs}, the group $\GE$ is just-infinite branch and thus acts totally non-freely on $\partial T_2$.

\begin{proposition}
\label{thm:GE}
The group $\GE$ is liftable.
\end{proposition}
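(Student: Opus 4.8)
The plan is to write down an explicit candidate lifting, patterned on the Lysionok substitution for $\G$ used in Example~\ref{ex:grigorchuk}, and then verify the two requirements of Definition~\ref{def:liftable}. Guided by the wreath recursion~\eqref{eqn:GE}, I would take the distinguished vertex to be $i=1$ and define $\sigma$ on generators by
\[\sigma\colon\quad a\mapsto ada,\qquad b\mapsto c,\qquad c\mapsto b,\qquad d\mapsto d.\]
First I would record the elementary identities obtained from~\eqref{eqn:GE}, namely $ada=(d,a)$ together with $b=(a,c)$, $c=(1,b)$, $d=(a,d)$. These show at once that the four images $ada,\,c,\,b,\,d$ all have trivial activity at the root, hence lie in $\St_{\GE}(1)$, and that $\pi_1(ada)=a$, $\pi_1(c)=b$, $\pi_1(b)=c$, $\pi_1(d)=d$; thus $\pi_1\circ\sigma=\mathrm{id}$ on generators. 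Since $\GE$ acts transitively on the first level (via $a$) and $\{a,b,c,d\}\subset\pi_1(\St_{\GE}(1))$, the group $\GE$ is self-replicating, so $\pi_1$ is onto; once $\sigma$ is known to be a homomorphism, both sides of $\pi_1\circ\sigma=\mathrm{id}$ are homomorphisms agreeing on generators, so the identity holds on all of $\GE$, and injectivity of $\sigma$ is then automatic as remarked after Definition~\ref{def:liftable}.

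The real content is to check that $\sigma$ extends to an endomorphism of $\GE$, i.e.\ that it carries the defining relations of $\GE$ to relations. Here I would use the fact that $\GE=\G_{(01)^\infty}$, being a member of the family $\G_\omega$ with $\omega$ periodic, admits a finite $L$-presentation of Lysionok type
\[\GE=\bigl\langle a,b,c,d\bigm| \sigma^n(a^2),\ \sigma^n(b^2),\ \sigma^n(c^2),\ \sigma^n(d^2),\ \sigma^n(bcd),\ \sigma^n(R_1),\dots,\sigma^n(R_k),\ n\ge0\bigr\rangle\]
whose defining substitution is exactly the map $\sigma$ above: it is the operation ``shift the sequence $\omega$ by one letter and then apply the relabelling $b\leftrightarrow c$ (fixing $a,d$)'', which sends $\G_{(01)^\infty}$ to $\G_{(10)^\infty}\cong\G_{(01)^\infty}$, for a suitable finite set of initial relators $R_1,\dots,R_k$ (the analogues of $(ad)^4$ and $(adacac)^4$ in the presentation~\eqref{eqn:presentationAsc} of $\G$). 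Given such a presentation, $\sigma$ extends to an endomorphism by construction, which is precisely what is needed; alternatively one can argue without exhibiting the presentation: $\sigma(g)\in\St_{\GE}(1)$ with $\pi_1(\sigma(g))=g$ already forces $\pi_1(\sigma(w))=w=1$ in $\GE$ for every relator $w$, so one is reduced to showing that $\pi_0(\sigma(w))=w(d,1,a,a)$ is trivial, an element living in the self-similar copy of $D_\infty=\langle a,d\rangle$ inside $\GE$ (the one from the Historical Remarks, $a=(1,1)\varepsilon$, $d=(a,d)$), which one kills by a level-by-level descent using the branch structure of $\GE$.

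The step I expect to be the genuine obstacle is exactly this verification that $\sigma$ is well defined --- equivalently, identifying the finite list $R_1,\dots,R_k$ and checking that $\sigma$ sends each relator of the resulting $L$-presentation into the normal closure of the others. Unlike the GGS groups in Theorem~\ref{thm:ggs}, where the maps $\pi_i\circ\sigma$ land in finite subgroups of $G$ and the check collapses to a computation in the abelianization $(\Z/p\Z)^2$, here $\pi_0\circ\sigma$ has infinite (dihedral) image, so the finiteness shortcut is not available and one must lean on the known $L$-presentation of $\GE$ or on an explicit descent through the levels. Once $\sigma$ is established as a lifting, Theorem~\ref{thmx:hnn} applies verbatim: the ascending HNN extension $\widetilde{\GE}$ embeds into $\Aut(\widetilde T_3)_\omega\cong\D(\Q_2)$ with vertex-transitive image whose closure is a scale group.
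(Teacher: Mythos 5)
Your general framing (exhibit $\sigma$ with $\pi_1\circ\sigma=\mathrm{id}$ on generators, then verify it extends to an endomorphism) is the same as the paper's, but your concrete candidate fails: the map $a\mapsto ada$, $b\mapsto c$, $c\mapsto b$, $d\mapsto d$ does \emph{not} extend to an endomorphism of $\GE$. Indeed, from~\eqref{eqn:GE} one has $ac=(b,1)\varepsilon$, hence $(ac)^2=(b,b)$ and $(ac)^4=1$ in $\GE$; but $ada=(d,a)$, so $\sigma(ac)=adab=(da,ac)$ and $\sigma\bigl((ac)^4\bigr)=(adab)^4=\bigl((da)^4,(ac)^4\bigr)=\bigl((da)^4,1\bigr)$, which is nontrivial because $ad$ (hence $da$) has infinite order in $\GE$ (from $(ad)^2=(da,ad)$ one sees that $\langle a,d\rangle\cong D_\infty$, as you yourself note). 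So a relator is sent to a non-relator, no Lysionok-type $L$-presentation of $\GE$ can have your $\sigma$ as its defining substitution, and no ``level-by-level descent'' can show $\pi_0(\sigma(w))=1$ for all relators $w$, because that statement is false. The step you correctly identify as ``the real content'' is therefore not merely left open --- it cannot be closed for this choice of $\sigma$.

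The paper's proof differs exactly where your plan breaks. It takes $\sigma(a)=aba=(c,a)$ (keeping $b\leftrightarrow c$ and $d\mapsto d$), so that the coordinate-$0$ defect $\pi_0\circ\sigma$ lands in the \emph{finite} subgroup $C=\langle a,c\rangle\cong D_4$ rather than in $D_\infty$. Then, writing $\psi(h)=(\pi_0(h),\pi_1(h))$ on $H=\St_{\GE}(1)$, it proves (Lemma~\ref{lem:psiH}, with a finite level-$4$ computation in GAP) that $\GE\times\GE=\psi(H)\rtimes\widetilde C_0$ with $\widetilde C_0=\langle(1,a),(1,c)\rangle$, so $\psi(H)\cap\widetilde C_0=\{1\}$; for any relator $w$ one has $\psi(\sigma(w))=(U,1)$ with $U\in\langle a,c\rangle$, conjugation by $a$ gives $(1,U)\in\psi(H)\cap\widetilde C_0$, forcing $U=1$ and hence $\sigma(w)=1$. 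In other words, the ``finiteness shortcut'' you declare unavailable is precisely what the paper arranges by choosing the substitution correctly; your appeal to the Bartholdi--Nekrashevych $L$-presentation is also of no help for your $\sigma$, since whatever lifting that presentation encodes, it cannot be the map you wrote down.
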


Observe that it is proved by Bartholdi and Nekrashevych in~\cite{bartholdi-n:quadratic1} that $\GE$ is an iterated monodromy group of a quadratic polynomial and is finitely $L$-presented. One can check that the presentation that comes from this result is based on a lifting and Theorem~\ref{thmx:hnn} can be applied. Instead we demonstrate another (practical) approach based on the use of the GAP package working when no presentation of a group is known.

We introduce some groups related to $\GE$ and use \verb"AutomGrp" GAP package~\cite{muntyan_s:automgrp} to perform our calculations. This is why we separated this example from others considered in other sections. All of our calculations can be verified by hands if needed. First, we observe that the following relations in $\GE$ hold:
\[a^2=b^2=c^2=d^2=bcd=(ac)^4=1.\]
Therefore, the group $C=\langle a,c\rangle$ is a quotient of the dihedral group $D_4$. On the other hand, the quotient of $C$ by its stabilizer of the third level has size 8:
\begin{verbatim}
gap> G:=AutomatonGroup("a=(1,1)(1,2),b=(a,c),c=(1,b),d=(a,d)");
< a, b, c, d >
gap> Size(PermGroupOnLevel(Group([a,c]),3));
8
\end{verbatim}
Therefore, $C\cong D_4$. Further, let $B=\langle b\rangle^{\GE}$ denote the normal closure of $b$ in $\GE$. Then, $\GE/B\cong C\cong D_4$ and $G=B\rtimes C$.

Let $H=\St_{\GE}(1)$ be the stabilizer of the first level in $\GE$. Then $[\GE:H]=2$ and there is a homomorphism
\begin{equation}
\begin{array}{llll}\psi\colon &H&\to&\GE\times\GE\\
&h&\mapsto&(\pi_0(h), \pi_1(h))
\end{array}
\end{equation}

The wreath recursion~\eqref{eqn:GE} implies that $aba=(c,a)$, so we define the substitution $\sigma$ on the generators of $\GE$ as
\begin{equation}
\label{eqn:sigma_GE}
\sigma\colon\left\{
\begin{array}{llll}
a&\mapsto& aba&=(c,a)\\
b&\mapsto& c&=(1,b)\\
c&\mapsto& b&=(a,c)\\
d&\mapsto& d&=(a,d)
\end{array}\right.
\end{equation}

We will show below that $\sigma$ extends to an injective endomorphism of $\GE$ (whose image is obviously a subgroup of $\St_{\GE}(1)$).

Define two subgroups $C_0=\langle (a,c),(c,a)\rangle$ and $\widetilde C_0=\langle (1,a),(1,c)\rangle$ of $\GE\times\GE$. Since $C=\langle a,c\rangle\cong D_4$ and the map $a\mapsto c, c\mapsto a$ induces an automorphism of $C$, we have that $C_0\cong\widetilde C_0\cong D_4$ as well.

\begin{lemma}
\label{lem:psiH}
The subgroup $\psi(H)<\GE\times\GE$ is the semidirect product $(B\times B)\rtimes C_0$. The subgroup $\widetilde C_0$ is complementary to $\psi(H)$, so that $\GE\times\GE=\psi(H)\rtimes\widetilde C_0$ and $[(\GE\times\GE):\psi(H)]=8$.
\end{lemma}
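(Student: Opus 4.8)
The plan is to pin down $\psi(H)$ by an explicit finite generating set, prove the two inclusions $\psi(H)\subseteq (B\times B)\rtimes C_0$ and $B\times B\subseteq\psi(H)$ separately, and then read off the index and the complement $\widetilde C_0$ from the resulting semidirect decomposition. First I would compute a generating set of $H=\St_{\GE}(1)$: since $a$ acts nontrivially on the first level while $b,c,d$ fix it, $\{1,a\}$ is a Schreier transversal for $H$ in $\GE$, so $H=\langle b,c,d,aba,aca,ada\rangle$. Applying $\psi$ and using the wreath recursion~\eqref{eqn:GE} (and the fact that $a=a^{-1}$ interchanges the two subtrees below the root), one gets $\psi(b)=(a,c)$, $\psi(c)=(1,b)$, $\psi(d)=(a,d)$, $\psi(aba)=(c,a)$, $\psi(aca)=(b,1)$, $\psi(ada)=(d,a)$. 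From $bcd=1$ and $b^2=c^2=1$ we get $d=cb$, hence $cd=b$, so $(a,d)=(a,c)(1,b)$ and $(d,a)=(c,a)(b,1)$ are redundant and $\psi(H)=\langle (a,c),(c,a),(1,b),(b,1)\rangle$.

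For the inclusion $\psi(H)\subseteq(B\times B)\rtimes C_0$, note that $(a,c),(c,a)\in C_0$ and $(1,b),(b,1)\in B\times B$ (as $b\in B$); since $B\trianglelefteq\GE$ and $a,c$ normalize $B$, the group $C_0$ normalizes $B\times B$, and $C_0\cap(B\times B)=1$ because first coordinates of elements of $C_0$ lie in $C$ while $B\cap C=1$. Hence $(B\times B)\rtimes C_0$ is an honest internal semidirect product containing the four generators, so it contains $\psi(H)$.

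The reverse inclusion is the crux, and here I would avoid the tempting but awkward route of showing directly that the $C$-conjugates of $b$ generate $B$. Instead: $1\times\GE$ is normal in $\GE\times\GE$, so $M:=\psi(H)\cap(1\times\GE)$ is normal in $\psi(H)$; the second-coordinate projection $p_2\colon\psi(H)\to\GE$ is surjective (its image contains $a,b,c$, and $\GE=\langle a,b,c\rangle$ since $d=cb$), hence $p_2(M)\trianglelefteq\GE$. But $(1,b)=\psi(c)\in M$, so $p_2(M)$ is a normal subgroup of $\GE$ containing $b$, whence $p_2(M)\supseteq\langle b\rangle^{\GE}=B$; that is, $1\times B=M\subseteq\psi(H)$. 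The same argument with $\GE\times 1\trianglelefteq\GE\times\GE$, $p_1$, and $(b,1)=\psi(aca)$ yields $B\times 1\subseteq\psi(H)$. Therefore $B\times B\subseteq\psi(H)$, and together with $C_0\subseteq\psi(H)$ and the previous paragraph this gives $\psi(H)=(B\times B)\rtimes C_0$, a genuine semidirect product since $B\times B$ is normal in the ambient group.

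Finally, since $\GE=B\rtimes C$ with $C\cong D_4$ we have $[\GE:B]=8$, so the tower $B\times B\le\psi(H)\le\GE\times\GE$ gives $[\GE\times\GE:\psi(H)]=[\GE:B]^2/|C_0|=64/8=8$. For the complement: $\widetilde C_0=1\times C\cong D_4$ has order $8$, and $\psi(H)\cap\widetilde C_0=1$, since an element $(1,y)\in\widetilde C_0\cap\psi(H)$ has $y\in C$, and writing it in the decomposition $(B\times B)\rtimes C_0$ and comparing first coordinates (using the uniqueness of the $B$-$C$ factorization in $\GE$) forces its $C_0$-component to be trivial, so $y\in B\cap C=1$. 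An injection of an $8$-element set into the $8$-element coset space $\psi(H)\backslash(\GE\times\GE)$ is a bijection, so $\widetilde C_0$ is a transversal and $\GE\times\GE=\psi(H)\,\widetilde C_0$. The only genuinely non-formal point in all of this is the normality trick of the third paragraph; the remaining steps (Schreier generators, wreath-recursion bookkeeping, index arithmetic) are routine and, as the paper indicates, can be cross-checked in GAP.
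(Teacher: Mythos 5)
Your argument is correct, and at the two places where real content is needed it proceeds differently from the paper. For $\psi(H)=(B\times B)\rtimes C_0$ the paper passes directly from the generating set $\{(1,b),(b,1),(a,c),(c,a)\}$ to the product $(B\times B)\cdot C_0$, the implicit point being that conjugating $(1,b)$ and $(b,1)$ by $C_0$, whose coordinate projections fill out $C$, produces $1\times B$ and $B\times 1$ because $B$ is generated by the $C$-conjugates of $b$; your route --- intersect $\psi(H)$ with the normal subgroups $1\times\GE$ and $\GE\times 1$ and push forward along the surjective coordinate projections to obtain a normal subgroup of $\GE$ containing $b$ --- is an equally valid and rather tidy substitute (though at that stage you have only shown $1\times B\subseteq M$, not $M=1\times B$, which is all you need). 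The more interesting divergence is the last step: the paper verifies $\psi(H)\cap\widetilde C_0=\{1\}$ by a GAP computation comparing the level-4 permutation quotients (that section deliberately showcases the computational approach), whereas you deduce it algebraically from the decomposition $\psi(H)=(B\times B)\rtimes C_0$ together with uniqueness of the $B$--$C$ factorization in $\GE=B\rtimes C$, which makes the lemma self-contained. One point to make explicit: both of your ``the $C_0$-component must be trivial'' steps and the count $|C_0|=8$ in the index computation rest on the fact, recorded in the paper just before the lemma, that the swap $a\leftrightarrow c$ is an automorphism of $C\cong D_4$, so that $C_0$ is its graph and the first-coordinate projection is injective on $C_0$; the bare observation that first coordinates of elements of $C_0$ lie in $C$ and $B\cap C=1$ only kills the first coordinate. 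Finally, your reading of the second ``$\rtimes$'' as a factorization $\GE\times\GE=\psi(H)\,\widetilde C_0$ with trivial intersection and index $8$ (rather than a literal semidirect product --- $\psi(H)$ is in fact not normal in $\GE\times\GE$) is exactly what the subsequent proof of liftability of $\GE$ uses, so nothing is lost.
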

\begin{proof}
First, we observe that the stabilizer $H$ is generated by $\{b,c,d,b^a,c^a,d^a\}$, so
\[\psi(H)=\langle (1,b),(a,c),(a,d),(b,1),(c,a),(d,a)\rangle=\langle (1,b),(b,1),(a,c),(c,a)\rangle=(B\times B)\cdot C_0.\]
To confirm that $\psi(H)=(B\times B)\rtimes C_0$ we only need to check that $(B\times B)\cap C_0=\{1\}$, which follows from the fact that $\GE=B\rtimes C$ and that the projection of $C_0$ on both coordinates is equal to $C$.

Finally, since $\GE\times\GE=\langle \psi(H),(1,a),(1,c)\rangle=\psi(H)\cdot \widetilde C_0$, to prove that $\GE\times\GE=\psi(H)\rtimes\widetilde C_0$ we only need to check that $\psi(H)\cap\widetilde C_0=\{1\}$. This can be verified by considering the quotients of these groups by the stabilizers of the fourth level.

\begin{verbatim}
gap> G:=AutomatonGroup("a=(1,1)(1,2),b=(a,c),c=(1,b),d=(a,d)");
< a, b, c, d >
gap> AG_UseRewritingSystem(G);
gap> H:=StabilizerOfFirstLevel(G);
< b, c, d, a*b*a, a*c*a, a*d*a >
gap> A:=TreeAutomorphism([1,a],());
(1, a)
gap> C:=TreeAutomorphism([1,c],());
(1, c)
gap> Intersection(PermGroupOnLevel(H,4),PermGroupOnLevel(Group([A,C]),4));
Group(())
\end{verbatim}
\end{proof}

\begin{proof}[Proof of Proposition~\ref{thm:GE}]
Let $w$ be a word in the generators of $\GE$ such that $w=_{\GE}1$. Then
\[\psi(\sigma(w))=(U,W)=_{\GE\times\GE}(U,1),\]
where $U$ is a word over $\{a,c,a^{-1},c^{-1}\}$. Then
\[\psi(a\sigma(w)a)=(W,U)=_{\GE\times\GE}(1,U)\in\widetilde C_0.\]
But since by Lemma~\ref{lem:psiH} $\psi(H)\cap\widetilde C_0=\{1\}$, we get $U=_{\GE}1$ and so $\sigma(w)=_{\GE}1$. Thus, $\sigma$ extends to a monomorphism $\GE\to\GE$.
\end{proof}

\section{Scale groups coming from Bass-Serre theory}
\label{sec:bass}
One more way to construct an action of a group on $\widetilde T_{d+1}$ is via using Bass-Serre Theory. Recall, that if for a finitely generated group $G$ there is an injective endomorphism $\sigma\colon G\to H$ onto a subgroup $H$ of $G$ of finite index $d$, then the corresponding ascending HNN extension $\widetilde G=G*_{\sigma}$, whose presentation is given by equation~\eqref{eqn:hnn_pres}, acts on its Bass-Serre tree $T\cong\widetilde T_{d+1}$ by automorphisms preserving one of its ends. The set of vertices of $T$ is the set $\{xG\colon x\in\widetilde G\}$ of left cosets of $G$ in $\widetilde G$, where vertices $xG$ and $yG$ are connected by an oriented edge (going from $xG$ to $yG$) if and only if $yG=xt^{-1}G$. In this tree a vertex $xG$ has one outgoing edge connecting it to $xt^{-1}G$ and $d$ incoming edges starting at vertices $xg_stG$, $i=1,\ldots,d$, where $\{g_1,\ldots,g_d\}$ is a left transversal of $H$ in $G$ (for our purposes we do not need to talk about the inversions of edges). The group $\widetilde G$ acts on $T$ by multiplication on left, so this action is vertex transitive. It also preserves the end $\omega$ of the tree $T$ corresponding to the sequence of vertices of the form $t^{-n}G$, $n\geq 0$ as follows from the normal form for the elements of ascending HNN extensions. This situation is discussed by Minasyan and Osin in~\cite[Proposition~4.13]{minasyan_o:acylindrical_hyperbolicity15}.

The stabilizers of vertices in $T$ are conjugates of $G$ by elements of $\widetilde G$, with $G$ itself being the stabilizer of the vertex $\Lambda=G\in V(T)$. Therefore, the action of $\widetilde G$ on $T$ is faithful (so that $\widetilde G$ embeds into $\Aut(\widetilde T_{d+1})_{\omega}$) if and only if the core of $G$ in $\widetilde G$ (i.e., the maximal normal subgroup of $\widetilde G$ in $G$) is trivial:
\begin{equation}
\label{eqn:core}
\mathrm{Core}(G)=\bigcap_{x\in\widetilde G}xGx^{-1}=\{1\}.
\end{equation}
In practice it is easier to check an equivalent to~\eqref{eqn:core} condition: $\mathrm{Core}\left(\cap_{n=1}^\infty \sigma^n(G)\right)=\{1\}$ (it is equivalent since $\sigma^n(G)$ for each $n$ is a conjugate of $G$ by a power of $t$). By construction the stabilizer of the vertex $\Lambda$ corresponding to the coset $1\cdot G=G$ is equal to $G$. The above facts are summarized in the following theorem.

\begin{theorem}
\label{thm:bass_serre}
Let $G$ be a group with a subgroup $H$ isomorphic to $G$ and of finite index $d$ and let $\sigma\colon G\rightarrow H$ be an isomorphism. Let $\widetilde G=G*_{\sigma}$ be the ascending HNN-extension of $G$ with respect to $\sigma$, whose presentation is given in equation~\eqref{eqn:hnn_pres}. Then
\begin{itemize}
\item[(i)] $\widetilde G$ acts on its Bass-Serre tree $T$, which is isomorphic to $\widetilde T_{d+1}$, by automorphisms fixing one of its ends $\omega$.
\item[(ii)] The action of $\widetilde G$ on $T$ is vertex transitive with the stabilizer of the vertex $\Lambda$ of $T$ corresponding to the coset $1\cdot G$ equal to $G$.
\item[(iii)] The action of $\widetilde G$ on $T$ is faithful if and only if $\mathrm{Core}(G)=\{1\}$ (equivalently, if $\mathrm{Core}\left(\cap_{n=1}^\infty \sigma^n(G)\right)=\{1\}$). In this case $\widetilde G$ embeds into $\Aut(\widetilde T_{d+1})_{\omega}$ as a group acting vertex transitively on $\widetilde T_{d+1}$ and its closure in $\Aut(\widetilde T_{d+1})_{\omega}$ is a scale group. Moreover, the image of $G$ under this embedding coincides with the stabilizer of the distinguished vertex $\Lambda=1\cdot G$ of $T$.
\end{itemize}
\end{theorem}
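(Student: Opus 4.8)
The plan is to recognise $\widetilde G=G*_\sigma$ as the fundamental group of a graph of groups with a single vertex carrying $G$ and a single loop edge carrying $G$, the two edge inclusions being $\mathrm{id}_G$ and $\sigma\colon G\xrightarrow{\ \sim\ }H<G$. Standard Bass--Serre theory (Serre's book, together with the explicit ascending-HNN description in~\cite[Proposition~4.13]{minasyan_o:acylindrical_hyperbolicity15}) then produces the action of $\widetilde G$ by left multiplication on the tree $T$ whose vertex set is $\{xG:x\in\widetilde G\}$ with the orientation described just before the statement. For item~(i) I would count the edges at a vertex $xG$: the loop edge contributes one outgoing edge $xG\to xt^{-1}G$ and $[G:H]=d$ incoming edges from the vertices $xg_stG$, where $\{g_1,\dots,g_d\}$ is a left transversal of $H$ in $G$ (distinctness of these cosets is exactly $g_s^{-1}g_{s'}\notin tGt^{-1}=\sigma(G)=H$); hence $T$ is $(d+1)$-regular and therefore isomorphic to $\widetilde T_{d+1}$. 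That the action fixes the end $\omega$ represented by the ray $(t^{-n}G)_{n\geq0}$ is the computation already recorded before the statement: writing $x=t^{-m}gt^n$ in the ascending-HNN (Britton) normal form, for every $k\geq n$ one has $x\cdot t^{-k}G=t^{-m}g\,t^{-(k-n)}G=t^{-(m+k-n)}\sigma^{k-n}(g)\,G=t^{-(k+m-n)}G$, so $x$ sends a tail of the spine ray into the spine, preserving $\omega$.

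Item~(ii) is then immediate: $\widetilde G$ acts transitively on its own coset space $\widetilde G/G$, so the action on vertices is vertex transitive, and the stabiliser of $\Lambda=1\cdot G$ is $\{x\in\widetilde G:xG=G\}=G$. For item~(iii), the kernel of the $\widetilde G$-action on $T$ coincides with the kernel of the action on the vertex set (an automorphism of a simplicial tree fixing all vertices fixes all edges), which is the kernel of the permutation representation of $\widetilde G$ on $\widetilde G/G$, namely $\bigcap_{x\in\widetilde G}xGx^{-1}=\mathrm{Core}_{\widetilde G}(G)$. So the action is faithful iff this core is trivial.

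To see the equivalent criterion, note that $\sigma^n(G)=t^nGt^{-n}$, so each $\sigma^n(G)$ is a $\widetilde G$-conjugate of $G$; since $N:=\mathrm{Core}_{\widetilde G}(G)$ is normal in $\widetilde G$ we get $\sigma(N)=tNt^{-1}=N$, hence $N=\sigma^n(N)\subseteq\sigma^n(G)$ for all $n\geq0$, so $N\subseteq\bigcap_{n\geq0}\sigma^n(G)$ and, $N$ being normal, $N\subseteq\mathrm{Core}_{\widetilde G}\!\bigl(\bigcap_{n\geq0}\sigma^n(G)\bigr)$; the reverse containment is monotonicity of the core operator. Thus $\mathrm{Core}_{\widetilde G}(G)=\mathrm{Core}_{\widetilde G}\!\bigl(\bigcap_{n\geq0}\sigma^n(G)\bigr)$, and in particular one is trivial exactly when the other is. When the core is trivial, $\widetilde G$ embeds into $\Aut(T)\cong\Aut(\widetilde T_{d+1})$; by~(i) the image lies in $\Aut(\widetilde T_{d+1})_\omega$ and by~(ii) it acts vertex transitively, so its closure is closed, fixes $\omega$, and is vertex transitive, i.e.\ a scale group by definition. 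Finally, the image of $G$ under this embedding is the vertex stabiliser $\St_{\widetilde G}(\Lambda)$ by~(ii).

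I do not expect a serious obstacle here: the statement is essentially a repackaging of standard Bass--Serre theory and of the discussion preceding it. The only points needing care are bookkeeping with conventions --- left cosets, the edge orientation $xG\to xt^{-1}G$, and the fact that it is the associated subgroup $H=\sigma(G)$ (not the identically-embedded copy of $G$) that has index $d$, which is precisely what makes $T$ exactly $(d+1)$-regular --- and the invocation of the Britton normal form for ascending HNN extensions, which underlies both the end-preservation computation and the description of vertex stabilisers as the $\widetilde G$-conjugates of $G$.
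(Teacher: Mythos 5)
Your proposal is correct and follows essentially the same route as the paper, which proves this theorem simply by summarizing the Bass--Serre coset-tree construction described in the paragraph preceding the statement (vertices as left cosets $xG$, the $1+d$ edge count via a left transversal of $H=\sigma(G)$, transitivity by left multiplication, end preservation via the ascending-HNN normal form as in~\cite{minasyan_o:acylindrical_hyperbolicity15}, and faithfulness iff $\mathrm{Core}(G)$ is trivial since $\sigma^n(G)=t^nGt^{-n}$). Your write-up merely makes explicit a few details the paper leaves implicit, such as the Britton-normal-form computation for end preservation and the argument that $\mathrm{Core}(G)=\mathrm{Core}\bigl(\cap_n\sigma^n(G)\bigr)$.
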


\begin{remark}
The above theorem works also in the case $d=[G:\sigma(G)]=\infty$, but in this case we get an action of $\widetilde G=G*_{\sigma}$ on a tree $\widetilde T_{\infty}$ that is not locally finite.
\end{remark}

To use the above theorem we need to deal with groups that are not finitely co-Hopfian, i.e. that have finite index subgroups isomorphic to the whole group. The obvious examples of such groups are $\mathbb Z^d, d\geq 1$ (of course, we are interested in finitely generated examples). Less obvious examples come from scale-invariant groups that were defined at the beginning of Section~\ref{sec:examples}. To get embeddings of such groups into $\Aut(\widetilde T_{d+1})_{\omega}$ the last condition of Theorem~\ref{thm:bass_serre} (on faithfulness of the action) needs to be satisfied.

\begin{example}
\label{ex:lamplighter2}
Our first example illustrating the use of Theorem~\ref{thm:bass_serre} is the lamplighter group $\mathcal L=(\mathbb Z/2\mathbb Z)\wr\mathbb Z$, studied in Example~\ref{ex:lamplighter}. Recall that the standard presentation of $\mathcal L$ coming from the ``lamplighter'' interpretation of the action, is given with respect to a generating set $\{x=a^{-1}b, s=b\}$, where $\{a,b\}$ is the generating set from Example~\ref{ex:lamplighter}:
\[\mathcal L\cong\langle x,s\mid x^2=[x,x^{s^n}]=1, n\geq 1\rangle.\]
It was shown in~\cite{grigorch_lsz:atiyah} that the substitution $\alpha\colon x\mapsto xx^s, s\mapsto s$ extends to an injective endomorphism $\alpha\colon\mathcal L\to\mathcal L$, whose image has index 2 in $\mathcal L$. The homomorphism $\alpha$ written in terms of generating set $\{a,b\}$ coincides with the lifting $\sigma_1$ defined in~\eqref{eqn:sigma1_2_lamplighter}. Thus, the image of $\alpha$ is equal to $\St_{\mathcal L}(1)$ and has index $2$ in $\mathcal L$. It is also shown in~\cite{grigorch_lsz:atiyah} that HNN extension of $\mathcal L$ with respect to $\alpha$ is a metabelian group given by finite presentation
\[\widetilde{\mathcal L}_{\alpha}=\langle x,s,t\mid x^2=[t,s]=[x,x^{s}]=1,x^t=xx^s\rangle.\]
By Corollary~\ref{cor:ess_free} this group embeds into $\Aut(\widetilde T_3)_\omega$. However, Theorem~\ref{thm:bass_serre} can also be applied here by verifying that the core of $\mathcal L$ in $\widetilde{\mathcal L}$ is trivial. We start with the following lemma.

\begin{lemma}
\label{lem:alpha_image}
For each $n\geq 0$ we have
\[\alpha^{2^n}(x)=x\cdot x^{s^{2^n}},\]
\[\alpha^{2^n}(s)=s.\]
\end{lemma}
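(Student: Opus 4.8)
The plan is to prove both identities by induction on $n$, handling the (trivial) statement about $s$ first and then feeding it into the induction for $x$. Since $\alpha$ fixes $s$ by its very definition, every power $\alpha^k$ fixes $s$; in particular $\alpha^{2^n}(s)=s$ for all $n\geq 0$, with no induction needed.

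For the identity $\alpha^{2^n}(x)=x\cdot x^{s^{2^n}}$ I would argue by induction on $n$, the base case $n=0$ being exactly the definition $\alpha(x)=xx^s$. For the inductive step the key point is the semigroup identity $\alpha^{2^{n+1}}=\alpha^{2^n}\circ\alpha^{2^n}$, which gives
\[\alpha^{2^{n+1}}(x)=\alpha^{2^n}\!\bigl(\alpha^{2^n}(x)\bigr)=\alpha^{2^n}\!\bigl(x\cdot x^{s^{2^n}}\bigr)\]
by the inductive hypothesis. Since $\alpha^{2^n}$ is a homomorphism fixing $s$ (hence fixing $s^{2^n}$), one has $\alpha^{2^n}(x^{s^{2^n}})=\alpha^{2^n}(x)^{s^{2^n}}$, and substituting $\alpha^{2^n}(x)=x\cdot x^{s^{2^n}}$ once more yields
\[\alpha^{2^{n+1}}(x)=\bigl(x\cdot x^{s^{2^n}}\bigr)\cdot\bigl(x\cdot x^{s^{2^n}}\bigr)^{s^{2^n}}=x\cdot x^{s^{2^n}}\cdot x^{s^{2^n}}\cdot x^{s^{2^{n+1}}}.\]

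To finish I would invoke the defining relations of $\mathcal L$: the conjugates $x^{s^k}$, $k\in\Z$, pairwise commute and each squares to $1$ (the latter following from $x^2=1$ by conjugation), so $x^{s^{2^n}}\cdot x^{s^{2^n}}=1$ and the two middle factors collapse, leaving $\alpha^{2^{n+1}}(x)=x\cdot x^{s^{2^{n+1}}}$, as required. There is no real obstacle here; the only things worth keeping in mind are that $\alpha$ is indeed a well-defined endomorphism of $\mathcal L$ (recorded above, following~\cite{grigorch_lsz:atiyah}) and that the cancellation $x^{s^{2^n}}x^{s^{2^n}}=1$ is exactly why attention restricts to exponents of the form $2^n$: identifying the subgroup $\langle x\rangle^{\mathcal L}$ with $\mathbb F_2[T,T^{-1}]$ (with $T$ acting as conjugation by $s$ and $x$ as $1$), the restriction of $\alpha$ is multiplication by $1+T$, and $(1+T)^{2^n}=1+T^{2^n}$ over $\mathbb F_2$, which is precisely the asserted formula.
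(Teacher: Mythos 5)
Your proof is correct and follows essentially the same route as the paper's: induction on $n$ via $\alpha^{2^{n+1}}=\alpha^{2^n}\circ\alpha^{2^n}$, using that $\alpha$ fixes $s$ and that the conjugates $x^{s^k}$ commute and are involutions so the middle factors cancel. The closing remark identifying $\alpha$ on $\langle x\rangle^{\mathcal L}\cong\mathbb F_2[T,T^{-1}]$ with multiplication by $1+T$ is a nice conceptual addition but not needed; the argument matches the paper's proof.
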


\begin{proof}
Since $\alpha(s)=s$, the second claim of the lemma is trivial. To prove the first claim we use induction on $n$. The base of induction is trivial and to prove the step we assume that the statement is correct for some $n\geq 0$. Then
\begin{multline*}
\alpha^{2^{n+1}}(x)=\alpha^{2^n}\left(\alpha^{2^n}(x)\right)=\alpha^{2^n}\left(x\cdot x^{s^{2^n}}\right)=x\cdot x^{s^{2^n}}\cdot \left(x\cdot x^{s^{2^n}}\right)^{s^{2^n}}\\
=x\cdot x^{s^{2^n}}\cdot x^{s^{2^n}}\cdot x^{s^{2^{n+1}}}=x\cdot x^{s^{2^{n+1}}}.
\end{multline*}
\end{proof}

\begin{lemma}
\label{lemma:core}
$\bigcap_{n=1}^\infty \alpha^n(\mathcal L)=\langle s\rangle$.
\end{lemma}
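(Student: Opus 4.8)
The plan is to pass to the standard linear model of the lamplighter group. Write $\mathcal L = A\rtimes\langle s\rangle$, where $A = \bigoplus_{i\in\Z}\Z/2\Z$ is the group of lamp configurations, and identify $A$ with the additive group of the Laurent polynomial ring $R = \mathbb F_2[s,s^{-1}]$ in such a way that conjugation by $s$ is multiplication by $s$ and the generator $x$ corresponds to the constant polynomial $1$; thus $x^{s^i}$ corresponds to the monomial $s^i$. Under this identification the endomorphism $\alpha$ fixes $s$ and sends $x = 1$ to $xx^s = 1+s$, so, being a homomorphism that fixes $s$, it acts on $A = R$ exactly as multiplication by $1+s$. (This is consistent with Lemma~\ref{lem:alpha_image}: in characteristic $2$ we have $(1+s)^{2^n} = 1+s^{2^n}$, matching $\alpha^{2^n}(x) = xx^{s^{2^n}}$.) Since multiplication by $1+s$ is an injective $R$-module endomorphism of $R$ with image the principal ideal $(1+s)R$, and since $\alpha$ respects the decomposition $\mathcal L = A\rtimes\langle s\rangle$, one obtains
\[
\alpha^n(\mathcal L) = (1+s)^n R \rtimes \langle s\rangle \qquad (n\geq 1).
\]

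Granting this, the lemma follows quickly. The inclusion $\langle s\rangle\subseteq\bigcap_{n\geq1}\alpha^n(\mathcal L)$ is immediate from $\alpha(s) = s$. For the reverse inclusion, take $g$ in the intersection and write it uniquely as $g = fs^j$ with $f\in R$ and $j\in\Z$; membership $g\in(1+s)^nR\rtimes\langle s\rangle$ then forces $f\in(1+s)^nR$ for every $n$. But $R$ is a unique factorization domain in which $1+s = s-1$ is a non-unit irreducible, so a nonzero element of $R$ is divisible by only finitely many powers of $1+s$; concretely, if $f = (1+s)^n h$ with $h\neq 0$, then comparing highest and lowest powers of $s$ gives $\deg f - \operatorname{ord} f \geq n$, which bounds $n$. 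Hence $\bigcap_{n\geq1}(1+s)^nR = \{0\}$, so $f = 0$ and $g = s^j\in\langle s\rangle$.

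The only step that needs care is the identity $\alpha^n(\mathcal L) = (1+s)^nR\rtimes\langle s\rangle$ — in particular that $\alpha^n$ maps $A$ \emph{onto} the full ideal $(1+s)^nR$ (not merely into it) and maps $\langle s\rangle$ onto itself — but this is immediate once $\alpha|_A$ is recognized as multiplication by $1+s$ and $\alpha(s)=s$; there is no substantive obstacle, and the remaining verifications are routine.
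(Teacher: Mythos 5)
Your proof is correct, but it takes a different (and in some ways cleaner) route than the paper. The paper stays entirely in the group-theoretic lamplighter language: it invokes Lemma~\ref{lem:alpha_image} to get the explicit form $\alpha^{2^n}(x)=x\,x^{s^{2^n}}$ (i.e.\ it passes to the subsequence $2^n$, where the characteristic-$2$ Frobenius identity makes the iterate computable), writes elements of $\alpha^{2^n}(\mathcal L)$ in the lamp normal form, and observes that any element outside $\langle s\rangle$ must light two lamps at least $2^n$ apart, which rules out membership in all $\alpha^{2^n}(\mathcal L)$. You instead pass to the module model $A\cong\mathbb F_2[s,s^{-1}]$, identify $\alpha|_A$ with multiplication by $1+s$, get $\alpha^n(\mathcal L)=(1+s)^nR\rtimes\langle s\rangle$ uniformly in $n$ (no need for the $2^n$ trick or for Lemma~\ref{lem:alpha_image} at all), and finish by noting $\bigcap_n(1+s)^nR=\{0\}$ via the spread $\deg f-\operatorname{ord} f\geq n$. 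The two arguments rest on the same underlying fact (the support of a nonzero element divisible by $(1+s)^n$ has spread at least $n$; the paper's ``two lamps $2^n$ apart'' is the special case $n=2^n$ seen through $(1+s)^{2^n}=1+s^{2^n}$), but yours is more structural: it isolates the statement as the triviality of $\bigcap_n I^n$ for the ideal $I=(1+s)$, identifies the intersection of the subgroups exactly rather than only bounding it, and uses the uniqueness of the decomposition $g=fs^j$ in the semidirect product to transfer the question to the ideal — exactly the step you flag, and which you justify adequately. The paper's version buys self-containedness in the notation already set up (and reuses its Lemma~\ref{lem:alpha_image}), while yours buys brevity and a reusable ring-theoretic statement; the aside about $R$ being a UFD is unnecessary, since your degree/order count already does the work.
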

\begin{proof}
It is enough to show that $\bigcap_{n=1}^\infty \alpha^{2^n}(\mathcal L)=\langle s\rangle$. Since $s\in\alpha^{2^n}(\mathcal L)$ for all $n\geq 1$, one inclusion is obvious. We will show below that $\bigcap_{n=1}^\infty \alpha^{2^n}(\mathcal L)\subset\langle s\rangle$.

Every element $g$ of $\mathcal L$ can be uniquely written as
\begin{equation}
\label{eqn:prod_x}
g=\left(\prod_{i=1}^{k}x^{s^{n_i}}\right)\cdot s^l,
\end{equation}
where $k\in\Z$ and $n_1<n_2<\cdots<n_k$ are distinct integers. In the lamplighter interpretation such an element corresponds to turning on lamps at positions $n_i$ and moving the pointer to position $l$.

By Lemma~\ref{lem:alpha_image} $\alpha^{2^n}(\mathcal L)$ is generated by $\{x\cdot x^{s^{2^n}},s\}$. Define commuting elements
\[x_{2^n,i}=\left(x\cdot x^{s^{2^n}}\right)^{s^i}=x^{s^i}\cdot x^{s^{2^n}+i}, i\in\Z\]
Then every element $h$ of $\alpha^{2^n}(\mathcal L)$ can be written uniquely as
\[h=\left(\prod_{i=1}^{k}x_{2^n,m_i}\right)\cdot s^l,\]
where $l\in\Z$ and $m_1<m_2<\cdots<m_k$ are distinct integers. In the case $k\geq 1$ (i.e., when $h\notin\langle s\rangle$), the above decomposition rewritten in the form~\eqref{eqn:prod_x}, will satisfy $n_1=m_1$ and $n_k=m_k+2^n\geq n_1+2^n$. Therefore, every element in $\alpha^{2^n}(\mathcal L)\setminus\langle s\rangle$ will turn on 2 lamps that are at least $2^n$ units apart. This shows that there is no element in $\mathcal L\setminus\langle s\rangle$ that belongs to $\alpha^{2^n}(\mathcal L)$ for all $n\geq 1$.
\end{proof}

\begin{lemma}
$\mathrm{Core}\bigl(\bigcap_{n=1}^\infty \alpha^n(\mathcal L)\bigr)=\{1\}$.
\end{lemma}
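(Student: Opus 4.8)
The plan is to combine Lemma~\ref{lemma:core} with a direct verification that the infinite cyclic subgroup $\langle s\rangle$ already has trivial core inside $\mathcal L$ itself.

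First I would invoke Lemma~\ref{lemma:core}, which identifies $\bigcap_{n=1}^\infty \alpha^n(\mathcal L)=\langle s\rangle\cong\Z$. Hence $\mathrm{Core}\bigl(\bigcap_{n=1}^\infty \alpha^n(\mathcal L)\bigr)$ is, by definition, the largest subgroup of $\langle s\rangle$ that is normal in $\widetilde{\mathcal L}_\alpha$. Since $\mathcal L$ sits inside $\widetilde{\mathcal L}_\alpha$, any such subgroup is in particular normalized by $\mathcal L$, so it is contained in the largest normal subgroup of $\mathcal L$ contained in $\langle s\rangle$; thus it suffices to prove that $\langle s\rangle$ has trivial core in $\mathcal L$.

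Now every nontrivial subgroup of $\langle s\rangle\cong\Z$ has the form $\langle s^m\rangle$ with $m\geq 1$, so it is enough to show that no such $\langle s^m\rangle$ is normal in $\mathcal L$. If $\langle s^m\rangle$ were normal, then in particular $xs^mx^{-1}\in\langle s^m\rangle\subseteq\langle s\rangle$. But in the lamplighter description $\mathcal L=\bigl(\bigoplus_{\Z}\Z/2\Z\bigr)\rtimes\Z$ with $x$ the lamp at the origin and $s$ the shift, the relations $x^2=[x,x^{s^n}]=1$ give $xs^mx^{-1}s^{-m}=x\cdot{}^{s^m}(x^{-1})$, a product of two lamps at distinct positions (namely $0$ and $\pm m$, since $m\neq 0$), hence $xs^mx^{-1}$ has nontrivial lamp configuration and therefore does not lie in $\langle s\rangle$ (whose elements have all lamps off). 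This contradiction shows $\mathrm{Core}_{\mathcal L}(\langle s\rangle)=\{1\}$, and therefore $\mathrm{Core}\bigl(\bigcap_{n=1}^\infty \alpha^n(\mathcal L)\bigr)=\{1\}$ as well. There is no genuine obstacle here beyond bookkeeping: the substance is Lemma~\ref{lemma:core}, and the remaining point — that the shift subgroup has trivial core in the lamplighter group — is immediate from its wreath-product structure. Combined with Theorem~\ref{thm:bass_serre}(iii), this shows that $\widetilde{\mathcal L}_\alpha$ acts faithfully and vertex transitively on its Bass--Serre tree $\widetilde T_3$ and that its closure in $\Aut(\widetilde T_3)_\omega$ is a scale group, giving a second route to the embedding already obtained via Corollary~\ref{cor:ess_free}.
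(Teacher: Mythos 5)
Your proposal is correct and follows essentially the same route as the paper: reduce via Lemma~\ref{lemma:core} to showing $\langle s\rangle$ contains no nontrivial subgroup normal in $\widetilde{\mathcal L}_\alpha$, and then observe that conjugating a nontrivial power of $s$ by the lamp generator $x$ produces an element with two lamps switched on, hence outside $\langle s\rangle$. Your intermediate reformulation through $\mathrm{Core}_{\mathcal L}(\langle s\rangle)$ is a harmless repackaging of the same observation.
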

\begin{proof}
According to Lemma~\ref{lemma:core} we only need to show that $\langle s\rangle$ does not contain a non-trivial normal subgroup in $\widetilde{\mathcal L}_{\alpha}$. This is the case because the conjugate $(s^i)^x$ of any non-trivial element $s^i$ of $\langle s\rangle$ is not a power of a shift $s$ as it changes the states of 2 lamps, so $(s^i)^x\notin \langle s\rangle$ and thus $\langle s\rangle$ contains no normal subgroup of $\widetilde{\mathcal L}_{\alpha}$.
\end{proof}

Now, since $\mathrm{Core}(\mathcal L)<\mathrm{Core}\bigl(\bigcap_{n=1}^\infty \alpha^n(\mathcal L)\bigr)$, the group $\widetilde{\mathcal L}_{\alpha}$ embeds into $\Aut(\widetilde T_{3})_{\omega}$ by Theorem~\ref{thm:bass_serre}.

The above example is interesting in particular because the group $\widetilde{\mathcal L}_{\alpha}$ is a quotient by the relation $x^2=1$ of Baumslag's finitely presented metabelian group with the commutator isomorphic to a free abelian group of infinite rank considered in~\cite{baumslag:fp_metabelian72}. Hence, $\widetilde{\mathcal L}_{\alpha}$ is a finitely presented metabelian group with the commutator subgroup isomorphic to an elementary 2-group of infinite rank.  Moreover, $\widetilde{\mathcal L}_{\alpha}$ is isomorphic to the fundamental group of a closed Riemannian manifold with a non-integer $L^2$-Betti number, constructed in~\cite{grigorch_lsz:atiyah} (as the first example answering the original question of M.~Atiyah and a counterexample to the Strong Atiyah conjecture on $L^2$-Betti numbers).
\end{example}

Formally, the two constructions of the actions of HNN extensions on $\widetilde T_{d+1}$ given by Theorem~\ref{thmx:hnn}
and Theorem~\ref{thm:bass_serre} are quite different and it is quite unlikely that they lead to conjugate action of the same group (although in some cases perhaps conjugacy of action is possible). Observe that in the construction used in the proof of Theorem~\ref{thm:bass_serre} we do not have a requirement that the index $[G:\sigma(G)]$ is related to the degree of the tree as we have in the case of Theorem~\ref{thm:bass_serre}. Moreover, this index in most examples presented in Section~\ref{sec:examples} is infinite. On the other hand, for each liftable self-similar group $G$ that acts transitively on the first level and is defined by a lifting $\sigma$, the core of $G$ in its HNN extension $\widetilde G$ from Theorem~\ref{thmx:hnn} is trivial. Indeed, since $G$ stabilizes the vertex $\Lambda$ in $\widetilde T_{d+1}$, its conjugates by elements of $\widetilde G$ are subgroups of the stabilizers of vertices of $\widetilde T_{d+1}$ in $\widetilde G$. Moreover, since $\widetilde G$ acts transitively on $\widetilde T_{d+1}$, one can conjugate $G$ into a stabilizer of any vertex. Now, as the action of $\widetilde G$ on $\widetilde T_{d+1}$ is faithful, the intersection of all these stabilizers, which includes $\mathrm{Core}(G)$ in $\widetilde G$, is trivial. Therefore, by Theorem~\ref{thm:bass_serre}, $\widetilde G$ acts on the corresponding Bass-Serre tree $\widetilde T_{d'+1}$, where $d'=[G:\sigma(G)]$ and in the case $d'<\infty$ this leads to the constructions of scale groups via Theorem~\ref{thm:bass_serre} (like in the last example).
%We would suggest additionally to the term scale-invariant group call a group \emph{finitely scaled} if there is a monomophism $\sigma$ wth $[G:\sigma(G)]<\infty$ and trivial core of intersections of iterates $\sigma^n(G)$. This is the class of groups to which Theorem~\ref{thm:bass_serre} applies. It would be good to develop new methods of constructing finitely scaled and scale invariant groups.

%In particular, it is interesting to investigate the following question.

%\begin{question}
%Are the actions of $\widetilde{\mathcal L}_{\alpha}$ on $\widetilde T_{3}$ constructed in Examples~\ref{ex:lamplighter} and~\ref{ex:lamplighter2} conjugate?
%\end{question}

\section{Scale-invariant groups and Haagerup property}
\label{sec:haagerup}

We are going to explore scale-invariant groups to get more examples of scale groups using Theorem~\ref{thm:bass_serre}. The main result of Nekrashevych and Pete in~\cite{nekrashevych_p:scale_invariant} and its corollary state:

\begin{theorem}[Theorem~1.1 in~\cite{nekrashevych_p:scale_invariant}]
\label{thm:scale}
Let $H$ be a countable scale-invariant group, with a descending sequence
$H=H_0 > H_1 >\cdots$  of finite index subgroups, each isomorphic to $H$,
with trivial intersection. Let $A$ be a countable automorphism group of $H$ leaving all subgroups $H_n$ invariant. Assume that the action of $A$ is faithful on each $H_n$, and that the semidirect products $H_n\rtimes A$ are isomorphic to $H\rtimes A$. Then $G:=H\rtimes A$ is scale-invariant; in fact, there is a required subgroup chain with $\cap_{n\geq 0}G_n=\{1\}$.
\end{theorem}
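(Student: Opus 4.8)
The plan is to begin from the chain that the hypotheses almost hand us, observe why it fails, and then correct it by twisting. Inside $G=H\rtimes A$ the subsets $G_n:=H_n A$ are subgroups, since $A$ leaves each $H_n$ invariant; as $H_n\cap A\subseteq H\cap A=\{1\}$ they are the internal semidirect products $H_n\rtimes A$, which by hypothesis are isomorphic to $H\rtimes A=G$. One checks that $hH_n\mapsto hG_n$ is a bijection $H/H_n\to G/G_n$ (uniqueness of the $H\rtimes A$ decomposition), so $[G:G_n]=[H:H_n]<\infty$, and $G_0>G_1>\cdots$ is strictly descending. The only defect is that $A\le G_n$ for every $n$, hence $\bigcap_n G_n\supseteq A\neq\{1\}$; so this chain does not exhibit scale-invariance, and a twist is forced.

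First I would replace the $G_n$ by the iterates of a single injective endomorphism. Fix an isomorphism $H\rtimes A\to H_1\rtimes A$ furnished by the hypothesis and regard it as an injective endomorphism $\phi\colon G\to G$ with $\phi(G)=G_1$; then $G_n:=\phi^n(G)$ is automatically descending, each $G_n\cong G$, and $[G_n:G_{n+1}]=[G:G_1]=[H:H_1]<\infty$, so $[G:G_n]<\infty$. The delicate point, and part of the work, is to choose $\phi$ (among all such isomorphisms, after composing with an inner automorphism if needed) so that it drags the filtration inward, $\phi(H_n)=H_{n+1}$ for all $n$ — so that $\phi^n(H)=H_n$ and the ``$H$-direction'' is squeezed into $\bigcap_n H_n=\{1\}$ — and so that it sends the complement $A$ to a genuinely twisted complement of $H_1$ in $G_1$ rather than to $A$ itself; for if $\phi(A)=A$ then again $A=\phi^n(A)\le G_n$ for all $n$ and we are back to the previous obstruction. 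Producing such a $\phi$ is where the hypotheses that $A$ preserves each $H_n$ and that $H_n\rtimes A\cong H\rtimes A$ enter, combined with the self-embedding $H\cong H_1$ underlying the chain $(H_n)$.

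Granting such a $\phi$, the heart of the matter is $\bigcap_n\phi^n(G)=\{1\}$. For a purported nontrivial $g$ in the intersection, the $H$-component of $g$ is easily killed: writing $g=h_n x_n$ in the decomposition $\phi^n(G)=\phi^n(H)\rtimes\phi^n(A)$ and passing to $G=H\rtimes A$, one finds that this component lies in $\phi^n(H)=H_n$ for every $n$, hence in $\bigcap_n H_n=\{1\}$; so necessarily $g\in A\setminus\{1\}$. Ruling out this last case is where I expect the real obstacle, and it is where all three standing hypotheses are used at once: faithfulness of the $A$-action on every $H_n$ forces $g$ to act nontrivially on each $H_n=\phi^n(H)$, while the fact that $g$ also lies in the increasingly twisted complement $\phi^n(A)$ of $H_n$ inside $\phi^n(G)$ must contradict this once one quantifies how far the successive complements are pushed relative to the filtration — essentially an $H^1(A,H_n)$-cocycle bookkeeping. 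Once that contradiction is secured, $\bigcap_n G_n=\{1\}$, and the chain $(G_n)$ witnesses that $G=H\rtimes A$ is scale-invariant, which is the asserted sharp form of the conclusion.
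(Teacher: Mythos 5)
There is a genuine gap: the two steps that carry all the weight of the theorem are not proved but merely postulated. You write ``Granting such a $\phi$'' for the existence of an injective endomorphism $\phi\colon G\to G$ with $\phi(G)=H_1\rtimes A$ (up to conjugacy), $\phi(H_n)=H_{n+1}$ for all $n$, and image complement ``genuinely twisted'' away from $A$; and you write ``once that contradiction is secured'' for the exclusion of a nontrivial $g\in A$ from $\bigcap_n\phi^n(G)$. The hypotheses only give abstract isomorphisms $H_n\rtimes A\cong H\rtimes A$; nothing in your argument produces a single endomorphism of $G$ compatible with the whole filtration, nor any control on how the successive complements drift, and the promised ``$H^1(A,H_n)$-cocycle bookkeeping'' is never done. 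Worse, the one reduction you do carry out is incorrect as stated: if the complement $\phi^n(A)$ is twisted, say by some $c_n\in H$, then an element $x_n\in\phi^n(A)$ has $H$-component of the form $c_n\,\alpha(c_n)^{-1}$, which need not lie in $H_n$; hence from $g\in\phi^n(G)=H_n\cdot\phi^n(A)$ you cannot conclude that the $H$-component of $g$ lies in $H_n$, and the claim that any element of the intersection must lie in $A$ collapses. (When the complement is untwisted the reduction is valid, but then $A\subseteq\bigcap_n G_n$, as you yourself note; so the failure occurs exactly in the case you need.)

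The paper does not prove this theorem either --- it is quoted from Nekrashevych--Pete --- but the proof it summarizes takes a different and complete route that bypasses the single-endomorphism construction altogether. One lets $G=H\rtimes A$ act on the rooted coset tree $\mathcal T$ of the chain $\{H_n\}$ (level-transitively, by affine automorphisms), and takes $G_n$ to be the stabilizers of the vertices along a ray to a boundary point $\xi$. These stabilizers are conjugates, by varying group elements, of $H_n\rtimes A\cong G$, so each $G_n\cong G$ and $[G:G_n]<\infty$ is immediate; the condition $\bigcap_n G_n=\{1\}$ is exactly the statement that $\xi$ has trivial stabilizer, and the nontrivial point is that such boundary points exist. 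This is established by a Baire category argument (the set of such points is a dense $G_\delta$), using faithfulness of the $A$-action on each $H_n$ and $\bigcap_n H_n=\{1\}$. In other words, the ``twisting'' you are trying to encode in one endomorphism is achieved there by conjugating $H_n\rtimes A$ by different elements along a carefully chosen ray, and the existence of a good choice is a topological, not cohomological, argument. If you want to salvage your plan, you would in effect have to reprove this: only in the special case $H_n=\varphi^n(H)$ for a single injective endomorphism $\varphi$ of $H$ does one get a natural endomorphism $\tilde\varphi$ of $G$ (as in Proposition~2.2 of the cited paper, used later in this article), and even then its iterated images contain $A$, so triviality of the intersection still requires passing to conjugates chosen via the tree.
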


\begin{corollary}[Corollary~1.2 in~\cite{nekrashevych_p:scale_invariant}]  The following groups are scale-invariant.
\begin{itemize}
\item[(1)] The lamplighter groups $G=F\wr\Z$, where $F$ is any finite Abelian group.
\item[(2)] The solvable Baumslag–Solitar groups $BS(1,m)=\langle a, b\mid bab^{-1} = a^m\rangle$ with $m > 1$.
\item[(3)] The affine group $\Z^d\rtimes\mathrm{GL}(\Z, d)$, and its subgroups $\Z^d\rtimes A$ for any $A\leq \mathrm{GL}(\Z, d), d > 1$.
\end{itemize}
\end{corollary}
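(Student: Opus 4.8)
\emph{Strategy and the affine case (item (3)).} The plan is to deduce all three items from Theorem~\ref{thm:scale} by producing, in each case, a countable abelian group $H$ with an explicit descending chain $H=H_0>H_1>\cdots$ of finite-index subgroups, each isomorphic to $H$, with $\bigcap_n H_n=\{1\}$, together with a countable group $A\le\Aut(H)$ such that $G\cong H\rtimes A$ and the remaining hypotheses of Theorem~\ref{thm:scale} hold. In each case the chain will come from a single injective endomorphism $\phi\colon H\to H$ with $[H:\phi(H)]<\infty$ and $\bigcap_n\phi^n(H)=\{1\}$, by setting $H_n:=\phi^n(H)$; the decisive feature is that $\phi$ commutes with the $A$-action, so that $(h,a)\mapsto(\phi^n(h),a)$ is an isomorphism of $H\rtimes A$ onto $H_n\rtimes A$ intertwining the $A$-actions and compatible with the chain of inclusions. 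For item (3), take $H=\Z^d$ and $\phi=\ell\cdot\mathrm{Id}$ for a fixed integer $\ell\ge2$, so $H_n=\ell^n\Z^d$ has index $\ell^{nd}$, is isomorphic to $\Z^d$, and $\bigcap_n\ell^n\Z^d=\{0\}$; hence $\Z^d$ is scale-invariant. Every $A\le\mathrm{GL}(\Z,d)=\Aut(\Z^d)$ preserves each $\ell^n\Z^d$, acts faithfully on it (it spans $\Q^d$), and commutes with the scalar $\phi$, so $H_n\rtimes A\cong H\rtimes A$, and Theorem~\ref{thm:scale} shows that $\Z^d\rtimes A$ --- in particular $\Z^d\rtimes\mathrm{GL}(\Z,d)$ --- is scale-invariant.

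\emph{The solvable Baumslag--Solitar groups (item (2)).} Realize $BS(1,m)=\langle a,b\mid bab^{-1}=a^m\rangle$ as $\Z[1/m]\rtimes\Z$, with $\Z[1/m]$ the normal closure of $a$ and $\Z=\langle b\rangle$ acting on $\Z[1/m]$ by multiplication by $m$ (an automorphism, since $m$ is a unit in $\Z[1/m]$). Fix $\ell\ge2$ coprime to $m$ and let $\phi$ be multiplication by $\ell$; then $H_n=\ell^n\Z[1/m]$ has index $\ell^n$ (as $m$ is invertible modulo $\ell$, so $\Z[1/m]/\ell\Z[1/m]\cong\Z/\ell\Z$), is isomorphic to $\Z[1/m]$, and $\bigcap_n\ell^n\Z[1/m]=\{0\}$ because every nonzero element has finite $\ell$-adic valuation; hence $\Z[1/m]$ is scale-invariant. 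Since multiplication by $m$ and by $\ell$ commute in the ring $\Z[1/m]$, each $H_n$ is $A$-invariant, $A=\langle b\rangle$ acts faithfully on it ($m$ has infinite multiplicative order), and $H_n\rtimes A\cong H\rtimes A=BS(1,m)$; Theorem~\ref{thm:scale} gives that $BS(1,m)$ is scale-invariant.

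\emph{The lamplighter groups (item (1)).} Write $F\wr\Z=H\rtimes\Z$, with $H=\bigoplus_{\Z}F$ regarded as a $\Z[t^{\pm1}]$-module on which $t$ acts by the shift, and let $\phi$ be the endomorphism ``multiplication by $t-1$'', i.e.\ $\phi(v)=t\cdot v-v$. A short finite-support computation shows that $\phi$ is injective, that its cokernel is isomorphic to $F$ (so $[H:\phi(H)]=|F|$), and that $\bigcap_n\phi^n(H)=\{0\}$ because $\phi$ strictly increases the width of the support of a nonzero element. Thus $H_n=\phi^n(H)$ is a descending chain of finite-index subgroups, each isomorphic to $H\cong\bigoplus_{\N}F$, with trivial intersection, so $H$ satisfies the first hypothesis of Theorem~\ref{thm:scale}. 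Since $\phi$ commutes with the shift (both are given by multiplication by elements of the commutative ring $\Z[t^{\pm1}]$), each $H_n$ is shift-invariant, the shift acts faithfully on it, and $H_n\rtimes\Z\cong H\rtimes\Z=F\wr\Z$; Theorem~\ref{thm:scale} then yields that $F\wr\Z$ is scale-invariant.

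\emph{Where the difficulty lies.} Essentially all the content is in Theorem~\ref{thm:scale} itself; granting it, each item is a matter of choosing the pair $(H,A)$ and the contracting endomorphism and checking routine compatibilities. The step that requires genuine care --- and where a naive attempt fails --- is that the isomorphisms $H_n\rtimes A\cong H\rtimes A$ must respect both the $A$-actions and the chain of inclusions, not merely be abstract: indeed the obvious candidate chain $\{H_n\rtimes A\}_n$ inside $H\rtimes A$ has intersection $A\ne\{1\}$, so the proof of Theorem~\ref{thm:scale} must interleave contraction within $H$ with transport along these isomorphisms, which is exactly why one chooses $\phi$ to be multiplication by an $A$-invariant (central) element of the ambient commutative ring in each case. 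A minor bookkeeping point: in items (1) and (2) the group $H$ is infinitely generated, so Theorem~\ref{thm:scale} is invoked in the stated generality where $H$ is merely countable; and in item (3) one should note that $\mathrm{GL}(\Z,d)$ --- hence any subgroup $A$ of it --- is countable, so that the theorem applies.
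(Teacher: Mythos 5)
The paper itself contains no proof of this corollary---it is quoted verbatim from Nekrashevych--Pete---and your derivation from Theorem~\ref{thm:scale}, using the pairs $(\Z^d, A)$, $(\Z[1/m],\Z)$, $(\bigoplus_{\Z}F,\Z)$ with the $A$-equivariant contracting endomorphisms given by multiplication by $\ell$, by $\ell$ coprime to $m$, and by $t-1$ respectively, is correct and is essentially the intended derivation in that reference. Your closing observation that the naive chain $H_n\rtimes A$ has intersection $A$, so that the real content sits inside Theorem~\ref{thm:scale} (proved via the coset tree and boundary points with trivial stabilizers), also matches the paper's own commentary following the corollary.
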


The proof of the theorem uses (rooted) coset tree $\mathcal T$ associated with $\{H_n\}_{n\geq 1}$ and the natural level transitive action of $H\rtimes A$ on it by affine automorphisms. The sequence $\{G_n\}_{n\geq 1}$ is obtained as the sequence of stabilizers of vertices belonging to the path in $\mathcal T$ joining the root vertex with infinity (i.e., with an end of the tree). It is proved that the stabilizers are isomorphic to $G$. The fact that they have finite index in $G$ is obvious from the construction, and the main nontrivial point is to show that there are points in the boundary of $\mathcal T$ with trivial stabilizers. The arguments provided in~\cite{nekrashevych_p:scale_invariant} show that in fact the set of such points is a dense $G_\delta$ set, so is non-empty (and comeager).

For our purposes we will need Proposition~2.2(i) from~\cite{nekrashevych_p:scale_invariant} and its proof. It states that if, under conditions of Theorem~\ref{thm:scale}, the sequence $\{H_n\}_{n\geq1}$ satisfies $H_n=\varphi^n(H)$ for some injective endomorphism $\varphi\colon H\to H$, then there is an injective endomorphism $\tilde\varphi\colon G\to G$ defined by (see equation~(2.3) in~\cite{nekrashevych_p:scale_invariant})
\begin{equation}
\label{eqn:varphi}
\tilde\varphi(h,\alpha)=\bigl(\varphi(h),\alpha\bigr)
\end{equation}
for $(h,\alpha)\in G$ such that $[G:\tilde\varphi(G)]=[H:\varphi(H)]<\infty$, and such that $\tilde\varphi^n(G)$ is the stabilizer in $G$ of a vertex of $\mathcal T$ of level $n$. Let
\[\widetilde G=G*_{\tilde\varphi}=\langle G,t\mid \text{relations in}\ G, tgt^{-1}=\tilde\varphi(g)\ \text{for all}\ g\in G\rangle\]
be the ascending HNN extension of $G$ with respect to $\tilde\varphi$.

\begin{proposition}
The core of $G$ in $\widetilde G$ is trivial.
\end{proposition}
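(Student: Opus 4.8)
The plan is to show directly that $\mathrm{Core}(G)$ in $\widetilde G$ is trivial by exhibiting, for every nontrivial $g\in G$, a conjugate of $g$ by an element of $\widetilde G$ that escapes from $G$. Recall that $G$ is the stabilizer of the root vertex $\Lambda$ of the Bass–Serre tree $T$ of the HNN extension $\widetilde G = G\ast_{\tilde\varphi}$, and that $T$ is isomorphic to $\widetilde T_{d+1}$ with $d = [H:\varphi(H)] = [G:\tilde\varphi(G)] < \infty$, by Theorem~\ref{thm:bass_serre}(i)--(ii). Since $\widetilde G$ acts vertex-transitively on $T$ and $G$ is a vertex stabilizer, $\mathrm{Core}(G) = \bigcap_{x\in\widetilde G} xGx^{-1}$ is exactly the kernel of the action of $\widetilde G$ on $T$. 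So the statement is equivalent to the assertion that this action is faithful, which by Theorem~\ref{thm:bass_serre}(iii) is in turn equivalent to $\mathrm{Core}\bigl(\bigcap_{n\geq 1}\tilde\varphi^n(G)\bigr) = \{1\}$.

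First I would identify $\bigcap_{n\geq 1}\tilde\varphi^n(G)$ explicitly using the formula~\eqref{eqn:varphi}: since $\tilde\varphi(h,\alpha) = (\varphi(h),\alpha)$, we get $\tilde\varphi^n(G) = \varphi^n(H)\rtimes A = H_n\rtimes A$, and therefore
\[
\bigcap_{n\geq 1}\tilde\varphi^n(G) \;=\; \Bigl(\bigcap_{n\geq 1}H_n\Bigr)\rtimes A \;=\; \{1\}\rtimes A \;\cong\; A,
\]
using the hypothesis from Theorem~\ref{thm:scale} that $\bigcap_{n\geq 1}H_n = \{1\}$. So it remains to show that the copy of $A$ sitting inside $G = H\rtimes A$ contains no nontrivial normal subgroup of $\widetilde G$.

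Next I would pick any nontrivial $(1,\alpha)$ in this copy of $A$ and conjugate it by an element of $H$ that $\alpha$ does not fix: since $A$ acts faithfully on $H$ (hypothesis of Theorem~\ref{thm:scale}), there is $h\in H$ with $\alpha(h)\neq h$, and then
\[
(h,1)^{-1}(1,\alpha)(h,1) \;=\; \bigl(h^{-1}\alpha(h),\,\alpha\bigr),
\]
whose $H$-component $h^{-1}\alpha(h)$ is nontrivial, so this element lies in $G\setminus A$. Hence $(1,\alpha)$ is not contained in any normal subgroup of $\widetilde G$ that is contained in $A$ (as $(h,1)\in G\le\widetilde G$), so the only such normal subgroup is trivial. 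This gives $\mathrm{Core}\bigl(\bigcap_{n\geq 1}\tilde\varphi^n(G)\bigr) = \{1\}$, and then Theorem~\ref{thm:bass_serre}(iii) yields $\mathrm{Core}(G) = \{1\}$ in $\widetilde G$, completing the proof.

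The only mild subtlety — not really an obstacle — is making sure the identification $\tilde\varphi^n(G) = H_n\rtimes A$ is legitimate, i.e.\ that $A$ really is invariant and that~\eqref{eqn:varphi} iterates as claimed; this is immediate from the defining formula since the $A$-coordinate is fixed by $\tilde\varphi$. Everything else is a one-line commutator computation in the semidirect product $H\rtimes A$, so no hard analysis is needed.
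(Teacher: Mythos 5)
Your proof is correct, but it takes a genuinely different route from the paper. The paper argues geometrically through the coset tree $\mathcal T$: quoting the proof of Proposition~2.2(i) of Nekrashevych--Pete, it identifies $t^nGt^{-n}=\tilde\varphi^n(G)$ with the stabilizer in $G$ of a level-$n$ vertex of $\mathcal T$, then uses level transitivity to conjugate these onto all vertex stabilizers and faithfulness of the $G$-action on $\mathcal T$ to conclude that the intersection of all conjugates of $G$ is trivial. You instead stay entirely inside the algebra of the semidirect product: using $\tilde\varphi(h,\alpha)=(\varphi(h),\alpha)$ you compute $\tilde\varphi^n(G)=H_n\rtimes A$, hence $\bigcap_{n\geq1}\tilde\varphi^n(G)=\bigl(\bigcap_n H_n\bigr)\rtimes A=A$ by the trivial-intersection hypothesis, and then kill any normal subgroup of $\widetilde G$ inside $A$ by the one-line conjugation $(h,1)^{-1}(1,\alpha)(h,1)=(h^{-1}\alpha(h),\alpha)$, which exits $A$ because $A\leq\Aut(H)$ acts faithfully; the reduction from $\mathrm{Core}(G)$ to $\mathrm{Core}\bigl(\bigcap_n\tilde\varphi^n(G)\bigr)$ is exactly the equivalence already stated in Theorem~\ref{thm:bass_serre}(iii), so no tree geometry is needed. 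Your argument is more elementary and self-contained (it bypasses the coset-tree action and the citation to Nekrashevych--Pete beyond the defining formula for $\tilde\varphi$), and it yields the extra information that $\bigcap_n\tilde\varphi^n(G)\cong A$; the paper's argument, on the other hand, does not require computing this intersection and fits the same mold as the discussion at the end of Section~\ref{sec:bass}, where triviality of the core is deduced from transitivity plus faithfulness of an action, so it adapts more readily to settings where the intersection is not so explicitly identifiable. Both proofs ultimately rest on the same two hypotheses ($\bigcap_n H_n=\{1\}$ and faithfulness of $A$), used in different packaging.
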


\begin{proof}
By the proof of Proposition~2.2(i) in~\cite{nekrashevych_p:scale_invariant}, $t^nGt^{-n}=\tilde\varphi^n(G)$ is the stabilizer in $G$ of a vertex of $\mathcal T$ of level $n$. Since the action of $G$ on $\mathcal T$ is level transitive, one can use conjugate $\tilde\varphi^n(G)$ by elements of $G$ into a stabilizer of any vertex of that (arbitrary) level. Since the action of $G$ on $\mathcal T$ is faithful, the intersection of all vertex stabilizers is trivial, which implies that $\mathrm{Core}(G)$, that is the intersection of all conjugates of $G$ in $\widetilde G$ is also trivial.
\end{proof}

As a direct corollary of the above proposition and Theorem~\ref{thm:bass_serre} we obtain:

\begin{corollary}
\label{cor:scale-invar}
The group $\widetilde G$ embeds into $\Aut(\widetilde T_{d+1})_{\omega}$ for $d=[G:\tilde\varphi(G)]$.
\end{corollary}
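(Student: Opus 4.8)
The plan is to obtain this as an immediate consequence of Theorem~\ref{thm:bass_serre} applied to the pair $\bigl(G,\tilde\varphi\bigr)$. First I would record, from Proposition~2.2(i) of~\cite{nekrashevych_p:scale_invariant} together with the discussion preceding the statement, that $\tilde\varphi\colon G\to G$ defined by~\eqref{eqn:varphi} is an injective endomorphism whose image has \emph{finite} index $d=[G:\tilde\varphi(G)]=[H:\varphi(H)]$ in $G$ (the equality of the two indices being clear from~\eqref{eqn:varphi}, since $\tilde\varphi$ acts as the identity on the $A$-coordinate). Consequently $G$ contains a subgroup $\tilde\varphi(G)$ of finite index $d$ isomorphic to $G$, with $\tilde\varphi$ itself an isomorphism onto it; this is precisely the hypothesis required by Theorem~\ref{thm:bass_serre}, with $\sigma=\tilde\varphi$ and the role of the subgroup ``$H$'' there played by $\tilde\varphi(G)$.

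Next I would check the faithfulness condition in part (iii) of Theorem~\ref{thm:bass_serre}: the action of $\widetilde G=G*_{\tilde\varphi}$ on its Bass-Serre tree $T\cong\widetilde T_{d+1}$ is faithful exactly when $\mathrm{Core}(G)=\{1\}$ in $\widetilde G$. But that is the content of the Proposition proved just above, so nothing further is needed. Feeding these two facts into parts (i)--(iii) of Theorem~\ref{thm:bass_serre} then gives the desired embedding $\widetilde G\hookrightarrow\Aut(\widetilde T_{d+1})_{\omega}$, acting vertex transitively on $\widetilde T_{d+1}$, with the image of $G$ equal to the stabilizer of the vertex $\Lambda=1\cdot G$ and the closure of $\widetilde G$ in $\Aut(\widetilde T_{d+1})_{\omega}$ a scale group.

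Since every ingredient is already available, there is really no obstacle to overcome; the only points deserving a sentence of care are (a) that the branching degree of the Bass-Serre tree is the index $d=[G:\tilde\varphi(G)]$ and that this index is finite --- guaranteed by Proposition~2.2(i) of~\cite{nekrashevych_p:scale_invariant}, so that $T$ is locally finite and the Remark following Theorem~\ref{thm:bass_serre} does not intervene --- and (b) that, when invoking Theorem~\ref{thm:bass_serre}, the symbol $\sigma$ appearing there must be identified with $\tilde\varphi$ here rather than with any of the earlier liftings also denoted $\sigma$.
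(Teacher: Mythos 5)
Your proposal is correct and follows exactly the paper's route: the corollary is deduced by feeding the preceding proposition ($\mathrm{Core}(G)=\{1\}$ in $\widetilde G$) and the finiteness of $d=[G:\tilde\varphi(G)]=[H:\varphi(H)]$ into Theorem~\ref{thm:bass_serre} with $\sigma=\tilde\varphi$. The extra remarks you add (local finiteness of the Bass--Serre tree, disambiguating $\sigma$ from the earlier liftings) are accurate but not needed beyond what the paper already asserts.
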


We apply Corollary~\ref{cor:scale-invar} to get examples of scale groups obtained as a closures of groups without Haagerup property. Recall that a (discrete) group $G$ has a \emph{Haagerup property} (or is \emph{a-T-menable}) if it admits a proper affine action on a Hilbert space. There are several equivalent definitions of this property (see, for instance, Section~1.1.1 and Theorem~2.1.1 in~\cite{cherix_cjv:groups_with_haagerup_property01}). Also, there are many definitions of \emph{Kazhdan's property (T)} and its relative version for a pair consisting of a group and its subgroup (see, for example, \cite[Definition~1.4.1]{cherix_cjv:groups_with_haagerup_property01}). Property (T) groups also have Serre's property (FA) meaning that every action on a tree has a fixed vertex. The class of Haagerup groups includes amenable groups, free groups, groups acting properly on trees, $\R$-trees, and $\mathrm{CAT}(0)$ cube complexes. Examples of groups with property (T) mostly come from lattices in higher rank Lie groups. A nontrivial example of a pair with relative property (T) is $(\Z^2\rtimes\mathrm{SL}(2,\Z),\Z^2)$ (see, for example,~\cite[Theorem~4.2.2]{bekka_hv:kazhdan_preperty_t08}), which, in particular, implies that $\Z^2\rtimes\mathrm{SL}(2,\Z)$ does not have Haagerup property.

\begin{theorem}
\begin{itemize}
\item[(a)]
There is a subgroup of $\Aut(\widetilde T_9)_\omega$ without the Haagerup property that acts vertex transitively on $\widetilde T_9$ and whose closure is a scale group.
\item[(b)]
There is a subgroup of $\Aut(\widetilde T_5)_\omega$ without the Haagerup property that acts vertex transitively on $\widetilde T_5$ and whose closure is a scale group.
\end{itemize}
\end{theorem}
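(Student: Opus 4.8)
The plan is to apply Corollary~\ref{cor:scale-invar} to groups of the form $G=\Z^d\rtimes A$, choosing $d$ and the $\Z$-linear automorphism group $A$ so that (1) $\Z^d$ is scale-invariant via scalar multiplication, which lets Theorem~\ref{thm:scale} apply to $G$; (2) the endomorphism $\tilde\varphi$ produced by Proposition~2.2(i) of~\cite{nekrashevych_p:scale_invariant} has index equal to the degree $d$ dictated by the target tree; and (3) $G$, hence its ascending HNN extension, fails the Haagerup property. In both parts the endomorphism of $\Z^d$ will be $\varphi\colon v\mapsto 2v$, and the verification of the hypotheses of Theorem~\ref{thm:scale} is the same: the subgroups $H_n=2^n\Z^d$ are $A$-invariant with $\bigcap_n H_n=\{0\}$, the group $A$ acts faithfully on each $H_n$ (if $\alpha(2^n v)=2^n v$ for all $v$ then $\alpha=1$), and multiplication by $2^n$ is an $A$-equivariant isomorphism $\Z^d\cong 2^n\Z^d$, so $H_n\rtimes A\cong H\rtimes A$. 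Thus $G$ is scale-invariant, and by equation~\eqref{eqn:varphi} the map $\tilde\varphi(h,\alpha)=(2h,\alpha)$ is an injective endomorphism of $G$ with $[G:\tilde\varphi(G)]=[\Z^d:2\Z^d]=2^d$; by the Proposition preceding Corollary~\ref{cor:scale-invar} the core of $G$ in $\widetilde G=G*_{\tilde\varphi}$ is trivial, so by that corollary and Theorem~\ref{thm:bass_serre}(iii), $\widetilde G$ embeds into $\Aut(\widetilde T_{2^d+1})_\omega$ acting vertex transitively with closure a scale group.

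For part (b), where $\widetilde T_5$ forces index $4$, I would take $d=2$ and $A=\mathrm{SL}(2,\Z)$ acting naturally, so $2^d=4$ and $\widetilde G=(\Z^2\rtimes\mathrm{SL}(2,\Z))*_{\tilde\varphi}$ embeds into $\Aut(\widetilde T_5)_\omega$ as above. The failure of the Haagerup property follows because $\widetilde G$ contains $G=\Z^2\rtimes\mathrm{SL}(2,\Z)$, which does not have the Haagerup property since the pair $(\Z^2\rtimes\mathrm{SL}(2,\Z),\Z^2)$ has relative property (T), as recalled before the theorem; since the Haagerup property passes to subgroups, $\widetilde G$ cannot have it either, and its image in $\Aut(\widetilde T_5)_\omega$ is the desired group.

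For part (a), where $\widetilde T_9$ forces index $8$, the rank-$2$ construction is unavailable: the only $\mathrm{SL}(2,\Z)$-equivariant endomorphisms of $\Z^2$ are the scalar multiplications (by Schur's lemma / Zariski density), whose indices are perfect squares, so $8$ cannot be realized this way. Instead I would take $d=3$ and $A=\mathrm{SL}(3,\Z)$, so that $2^d=8$ and $\widetilde G=(\Z^3\rtimes\mathrm{SL}(3,\Z))*_{\tilde\varphi}$ embeds into $\Aut(\widetilde T_9)_\omega$ with the same transitivity and scale-group conclusions. Here the failure of the Haagerup property is even more direct: $\widetilde G$ contains $\mathrm{SL}(3,\Z)$, which is infinite and has Kazhdan's property (T), hence is not a-T-menable; were $\widetilde G$ a-T-menable, so would be its subgroup $\mathrm{SL}(3,\Z)$, a contradiction.

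The argument is essentially an assembly of cited results, so the only points requiring care are bookkeeping ones: matching $[G:\tilde\varphi(G)]=2^d$ to the prescribed degrees ($8$ for $\widetilde T_9$, forcing the rank-$3$ choice, and $4$ for $\widetilde T_5$), and confirming the three hypotheses of Theorem~\ref{thm:scale}, which are immediate for scalar multiplication on $\Z^d$ with $A\le\mathrm{GL}(d,\Z)$. Everything else — relative property (T) of $(\Z^2\rtimes\mathrm{SL}(2,\Z),\Z^2)$, property (T) of $\mathrm{SL}(3,\Z)$, Proposition~2.2(i) of~\cite{nekrashevych_p:scale_invariant}, triviality of the core, and Corollary~\ref{cor:scale-invar} — is quoted directly. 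The main conceptual subtlety, and the one worth stating explicitly in the write-up, is precisely the index-matching constraint that dictates the ambient group's abelian rank in each case.
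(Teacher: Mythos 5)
Your proposal is correct and follows essentially the same route as the paper: both use $H=\Z^d$ with $\varphi(v)=2v$ and $A=\mathrm{SL}(d,\Z)$, taking $d=3$ (index $8$, tree $\widetilde T_9$) for part (a) and $d=2$ (index $4$, tree $\widetilde T_5$) for part (b), then invoke Theorem~\ref{thm:scale}, the endomorphism $\tilde\varphi$ of equation~\eqref{eqn:varphi}, the triviality of the core, Corollary~\ref{cor:scale-invar}, and Theorem~\ref{thm:bass_serre}(iii), with the failure of the Haagerup property obtained exactly as you argue (relative property (T) of $(\Z^2\rtimes\mathrm{SL}(2,\Z),\Z^2)$, respectively a subgroup obstruction for the rank-$3$ case). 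Your explicit verification of the hypotheses of Theorem~\ref{thm:scale} and the index-matching remark are slightly more detailed than the paper's write-up but do not change the argument.
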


\begin{proof}
To prove item (a), consider a scale-invariant group $H=\Z^3$ with an injective endomorphism $\varphi\colon H\to H$ defined by $\varphi(v)=2v$. This defines a nested sequence $H_n=\varphi^n(H)$ of subgroups of $H$ with trivial intersection such that $[H_n:H_{n+1}]=8$ for all $n\geq0$. Then according to the proof of Theorem~\ref{thm:scale} the group $\Gamma=\Z^3\rtimes\mathrm{SL}(\Z,3)$ consisting of affine transformations of $\Z^3$, which is a group without Haagerup property (as the class of groups with Haagerup property is closed under taking subgroups), acts faithfully by automorphisms on the coset tree $\mathcal T$, that is a regular rooted 8-ary tree. By Corollary~\ref{cor:scale-invar} its HNN extension $\widetilde\Gamma*_{\tilde\varphi}$, where $\tilde\varphi$ is defined by equation~\eqref{eqn:varphi}, embeds into $\Aut(\widetilde T_9)_\omega$. Its closure in $\Aut(\widetilde T_9)_\omega$ is a scale group by Theorem~\ref{thm:bass_serre}(iii).

The proof of item (b) is very similar with the difference that we take $H=\Z^2$ and $\Gamma=\Z^2\rtimes\mathrm{SL}(\Z,2)$ as the corresponding scale-invariant group. Since now $[H:\varphi(H)]=4$, the HNN extension $\widetilde\Gamma*_{\tilde\varphi}$, where $\tilde\varphi$ is defined by equation~\eqref{eqn:varphi}, embeds into $\Aut(\widetilde T_5)_\omega$. Its closure in $\Aut(\widetilde T_5)_\omega$ is again a scale group by Theorem~\ref{thm:bass_serre}(iii).

\end{proof}

%Note that it was shown in Section~3.3 of~\cite{nekrashevych_p:scale_invariant} that $\cap_{n\geq 1}\tilde\varphi^n(\Gamma)$, that is equal to the stabilizer in $\Gamma$ of an infinite geodesic in $\mathcal T$ consisting of vertices $\varphi^n(H)$, is nontrivial.

%\section{Concluding Remarks}
%\label{sec:concluding_remarks}

%\bibliographystyle{plain}
%\bibliography{../../mylib}

\def\cprime{$'$} \def\cydot{\leavevmode\raise.4ex\hbox{.}} \def\cprime{$'$}
  \def\cprime{$'$} \def\cprime{$'$} \def\cprime{$'$} \def\cprime{$'$}
  \def\cprime{$'$} \def\cprime{$'$} \def\cprime{$'$} \def\cprime{$'$}
  \def\cprime{$'$} \def\cprime{$'$} \def\cprime{$'$} \def\cprime{$'$}
  \def\cprime{$'$}

\end{document}